\theoremstyle{plain}      
\newtheorem{theorem}{Theorem} %[section]     
\newtheorem{corollary}[theorem]{Corollary}     
\newtheorem{lemma}[theorem]{Lemma}     
\newtheorem{proposition}[theorem]{Proposition}     
\newtheorem{conjecture}[theorem]{Conjecture}     
\newtheorem{algorithm}[theorem]{Algorithm}     
\theoremstyle{remark}      
\newtheorem{example}[theorem]{Example} 
\newtheorem{remark}[theorem]{Remark} 
\newtheorem{definition}[theorem]{Definition}     
\newcounter{FNC}[page]
\def\fauxfootnote#1{{\addtocounter{FNC}{2}\Magenta{$^\fnsymbol{FNC}$}%
     \let\thefootnote\relax\footnotetext{\Magenta{$^\fnsymbol{FNC}$#1}}}}
\def\Color#1#2{#2}
\newcommand{\defcolor}[1]{\RoyalBlue{#1}}
\newcommand{\demph}[1]{\defcolor{{\sl #1}}}
\def\Nred#1{\Color{0 0.6 0.6 0}{#1}}
\newcommand{\cbd}{{\Black{\bullet}\Nred{\bullet}}}
\newcommand{\A}{{\mathbb A}}
\newcommand{\CC}{{\mathbb C}}
\newcommand{\FF}{{\mathbb F}}
\newcommand{\PP}{{\mathbb P}}
\newcommand{\QQ}{{\mathbb Q}}
\newcommand{\ZZ}{{\mathbb Z}}
\newcommand{\Fln}{{\mathbb F}\ell_n}
\newcommand{\Fl}{{\mathbb F}\ell}
\newcommand{\Gr}{\mbox{\it Gr\,}}
\newcommand{\Gal}{\mbox{\rm Gal}}
\newcommand{\Mon}{\mbox{\rm Mon}}
\newcommand{\calE}{{\mathcal E}}
\newcommand{\calF}{{\mathcal F}}
\newcommand{\calG}{{\mathcal G}}
\newcommand{\calO}{{\mathcal O}}
\newcommand{\calT}{{\mathcal T}}
\newcommand{\calV}{{\mathcal V}}
\newcommand{\calX}{{\mathcal X}}
\newcommand{\calY}{{\mathcal Y}}
\newcommand{\calZ}{{\mathcal Z}}
\newcommand{\lhra}{\ensuremath{\lhook\joinrel\relbar\joinrel\relbar\joinrel\rightarrow}}
\newcommand{\calEdot}{\calE_{\bullet}}
\newcommand{\calFdot}{\calF_{\bullet}}
\newcommand{\calGdot}{\calG_{\bullet}}
\newcommand{\Edot}{E_{\bullet}}
\newcommand{\Fdot}{F_{\bullet}}
\newcommand{\Gdot}{G_{\bullet}}
\newcommand{\Mdot}{M_{\bullet}}
\newcommand{\blambda}{{\boldsymbol{\lambda}}}
\newcommand{\bmu}{{\boldsymbol{\mu}}}
\newcommand{\bnu}{{\boldsymbol{\nu}}}
\newcommand{\bI}{\includegraphics{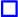}}
\newcommand{\I}{\includegraphics{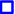}}
\newcommand{\II}{\includegraphics{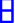}}
\newcommand{\T}{\includegraphics{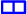}}
\newcommand{\TI}{\includegraphics{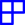}}
\newcommand{\mTI}{\includegraphics{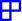}}
\newcommand{\sI}{{\includegraphics{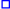}}}
\newcommand{\sII}{{\includegraphics{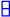}}}
\newcommand{\shIII}{{\includegraphics{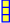}}}
\newcommand{\sT}{{\includegraphics{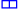}}}
\newcommand{\sTI}{{\includegraphics{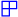}}}
\newcommand{\sTII}{{\includegraphics{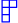}}}
\newcommand{\bTII}{{\includegraphics{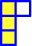}}}
\newcommand{\scTII}{{\includegraphics{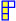}}}
\newcommand{\sTT}{{\includegraphics{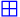}}}
\newcommand{\mTT}{{\includegraphics{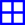}}}
\newcommand{\TT}{{\includegraphics{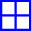}}}
\newcommand{\sTh}{{\includegraphics{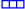}}}
\newcommand{\shTh}{{\includegraphics{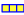}}}
\newcommand{\sThI}{{\includegraphics{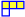}}}
\newcommand{\sFI}{{\includegraphics{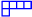}}}
\newcommand{\shF}{{\includegraphics{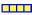}}}
\newcommand{\shFI}{{\includegraphics{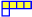}}}
\newcommand{\FI}{{\includegraphics{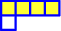}}}
\newcommand{\FT}{{\includegraphics{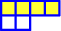}}}
\newcommand{\ThThI}{{\includegraphics{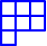}}}
\newcommand{\ThTh}{{\includegraphics{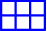}}}
\newcommand{\sThTh}{{\includegraphics{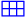}}}
\begin{document}     

%%%%%%%%%%%%%%%%%%%%%%%%%%%%%%%%%%%%%%%%%%%%%%%%%%%%%%%%%%%%%%%%%%%%%%%%%

\title[Schubert Galois groups in $\Gr(4,9)$]{Classification of Schubert Galois
  groups in $\Gr(4,9)$} 

%%%%%%%%%%%%%%%%%%%%%%%%%%%%%%%%%%%%%%%%%%%%%%%%%%%%%%%%%%%%%%%%%%%%%%%%%%%%%%%%%
\author[Mart\'in del Campo]{Abraham Mart\'in del Campo}
\address{Abraham Mart\'in del Campo\\
         Centro de Investigaci\'on en Matem\'aticas, A.C.\\
         Jalisco S/N, Col. Valenciana\\
         36023 \ Guanajuato, Gto.  M\'exico}
\email{abraham.mc@cimat.mx}
\urladdr{http://personal.cimat.mx:8181/\~{}abraham.mc}
%%%%%%%%%%%%%%%%%%%%%%%%%%%%%%%%%%%%%%%%%%%%%%%%%%%%%%%%%%%%%%%%%%%%%%%%%%%%%%%%%
\author[Sottile]{Frank Sottile}
\address{Frank Sottile\\
         Department of Mathematics\\
         Texas A\&M University\\
         College Station\\
         Texas \ 77843\\
         USA}
\email{sottile@math.tamu.edu}
\urladdr{www.math.tamu.edu/\~{}sottile}
%%%%%%%%%%%%%%%%%%%%%%%%%%%%%%%%%%%%%%%%%%%%%%%%%%%%%%%%%%%%%%%%%%%%%%%%%%%%
\author[Williams]{Robert Lee Williams}
\address{Robert Lee Williams\\
         Department of Mathematics\\
         Rose-Hulman Institute of Technology\\
         Terre Haute\\
         IN \ 47803\\
         USA}
\email{williams1eipi0@gmail.com}
\urladdr{~}
%%%%%%%%%%%%%%%%%%%%%%%%%%%%%%%%%%%%%%%%%%%%%%%%%%%%%%%%%%%%%%%%%%%%%%%%%%%%%%%%%
\thanks{Work of Mart\'in del Campo supported in part by CONACyT under grant C\'atedra-1076}
\thanks{Work of Sottile supported in part by the National Science Foundation under grant DMS-1501370}
%%%%%%%%%%%%%%%%%%%%%%%%%%%%%%%%%%%%%%%%%%%%%%%%%%%%%%%%%%%%%%%%%%%%%%%%%%%%
\subjclass[2010]{14N15, 12F10, 12F12}
% 14N15 Classical problems, Schubert calculus 
% 12F10  Field extensions: Separable extensions, Galois theory 
% 12F12 Field extensions:  Inverse Galois theory
\keywords{Schubert calculus, Grassmannian, Galois group, permutation group} 
%%%%%%%%%%%%%%%%%%%%%%%%%%%%%%%%%%%%%%%%%%%%%%%%%%%%%%%%%%%%%%%%%%%%%%%%%%%%%%%%%

\begin{abstract}
 We classify Schubert problems in the Grassmannian of 4-planes in 9-dimen\-sion\-al space
 by their Galois groups.
 Of the 31,806 essential % This has been checked
 Schubert problems in this Grassmannian, there are only 149 whose Galois group does not
 contain the alternating group.  
 We identify the Galois groups of these 149---each is an imprimitive permutation group.
 These 149 fall into two families according to their geometry.
 This study suggests a possible classification of Schubert problems  whose Galois group is not the full symmetric
 group, and is a first step towards the inverse Galois problem for Schubert calculus.
\end{abstract}

\maketitle

%%%%%%%%%%%%%%%%%%%%%%%%%%%%%%%%%%%%%%%%%%%%%%%%%%%%%%%%%%%%%%%%%%%%%%%%%%%%%%%%%
\section*{Introduction}
In  ``{\it Trait\'e des substitutions et des \'equations alg\'ebriques}'',
Jordan~\cite{J1870} explained how a problem in enumerative geometry has a Galois group that
acts on its solutions.
If the set of solutions of an enumerative problem possess additional geometric 
structure, then its Galois group must preserve that structure, and thus cannot be the full symmetric group
on its solutions---we call such a  problem/Galois group \demph{enriched}. 
Jordan studied several enriched enumerative problems, such as the 27 lines on a smooth cubic surface in ${\mathbb P}^3$.
Later, Harris~\cite{Ha79} showed that Jordan's enriched problems 
had Galois group as large as possible given  the structure of the set of solutions.
He also showed that natural  generalizations of each of Jordan's enriched problems, as well as some other enumerative
problems, such as Chasles' problem of 3264 plane conics tangent to five given conics~\cite{Ch1864},
have full symmetric Galois groups. 

Until recently, Galois groups of enumerative problems were difficult to study, for there were few methods  
available. Vakil's geometric Littlewood-Richardson rule~\cite{Va06a,Va06b} leads to a recursive 
method that may show the Galois group of a Schubert problem in a Grassmannian (a 
\demph{Schubert Galois group}) contains the alternating group on its solutions (is \demph{at least alternating}). 
Numerical algebraic geometry can compute a monodromy group~\cite{HRS,LS09}, which equals the Galois
group~\cite{Ha79,Hermite}.
Symbolic computation can compute cycle types of Frobenius elements in Galois groups over $\QQ$.
With many thousands to hundreds of millions of computable Schubert problems in Grassmannians, Schubert calculus
forms a laboratory for studying Galois groups in enumerative geometry. 

Write $\defcolor{\Gr(k,n)}$ for the Grassmannian of $k$-planes in $\CC^n$.
Vakil~\cite{Va06b} used his method to show that every Schubert Galois group in $\Gr(2,n)$ for $n\leq 16$ and in $\Gr(3,n)$ for
$n\leq 9$ is at least alternating.
%
%  We need to deduce this later in the text.  It is easy.
%
Vakil's method implies that the
Galois group of any Schubert problem with two or three solutions is the full symmetric group.
Derksen found an enriched problem in $\Gr(4,8)$ with six solutions, which Vakil generalized to an infinite
family of enriched Schubert problems with members in every Grassmannian $\Gr(k,n)$ for 
$4\leq k\leq n{-}4$~\cite[Section~3.14]{Va06b}. 
Numerical computation of monodromy showed~\cite{LS09} that many simple (explained
in~\S\ref{SS:SchubertCalculus}) Schubert problems 
have Galois group the full symmetric group, including one with 17,589 solutions in $\Gr(3,9)$.
Combinatorial and analytic arguments starting with Vakil's method showed~\cite{BdCS} that, for any $n$, every Schubert
Galois group in $\Gr(2,n)$ is at least alternating.
Geometric and combinatorial methods were used in~\cite{SW_double} to show that every Schubert Galois group in $\Gr(3,n)$, for
any $n$, is  2-transitive. 
%
%  This needs a definition later in the text
%
A Schubert problem is essential (see the discussion around~\eqref{Eq:essential} in \S\ref{SS:SchubertCalculus})
if it is not equivalent to one on a smaller Grassmannian.
Galois groups of all 3501 essential Schubert problems in $\Gr(4,8)$ were studied in~\cite[Section~4]{MSJ}
and~\cite{SW_double}.  %Checked
All except 14 are at least alternating.
Each of those 14 has an imprimitive Galois group and they fall into three families with Derksen's example forming one
family. 

Here, we study the Galois groups of all 31,806 %Checked
essential Schubert problems in the next Grassmannian, $\Gr(4,9)$,
determining that all except 149 are at least alternating.
Each of the 149 has an imprimitive Galois group that is a wreath product of two symmetric groups.
All 149 and 13 of the 14 in $\Gr(4,8)$ have a common structure---they are a fibration of Schubert problems
(see Section~\ref{S:geometry}) in either $\Gr(2,4)$ or $\Gr(2,5)$, which explains their Galois group. 
Among those enriched problems, only Derksen's is not a fibration, and its structure was explained by Vakil.
The simplicity of this classification for $\Gr(4,8)$ and $\Gr(4,9)$ suggests the possibility of classifying all enriched
Schubert problems in Grassmannians.

Each Schubert Galois group over $\CC$ is a normal subgroup of the Galois group over $\QQ$, and we conjecture that
the two groups are equal.
Computing cycle types of Frobenius elements enables us to determine the Galois group over $\QQ$ of every essential Schubert
problem in $\Gr(4,9)$ with fewer than 300 solutions---26,051 problems in all.
It is worth noting that each enriched Schubert problem in $\Gr(4,9)$ has ten or fewer solutions.
Other than the 149 enriched problems, all remaining 26,353 have full symmetric Galois groups.
We highlight this observed dichotomy from our study of all small Grassmannians.

%%%%%%%%%%%%%%%%%%%%%%%%%%%%%%%%%%%%%%%%%%%%%%%%%%%%%%%%%%%%%%%%%%%%%%%%%%%%%%%%%
\begin{theorem}
 Every known Schubert Galois group is either
 \begin{enumerate}
  \item the full symmetric group on the solutions, and hence maximally transitive, or
  \item not $2$-transitive.
    %
    % Not all of these are imprimitive, see below
    %
 \end{enumerate}
\end{theorem}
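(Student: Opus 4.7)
The statement synthesizes the entire corpus of Schubert Galois groups that have been explicitly determined to date, so the plan is to survey the known classifications and verify both halves of the dichotomy against that list.

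First I would enumerate the sources: Vakil's recursive determinations in~\cite{Va06a,Va06b} for $\Gr(2,n)$ with $n\leq 16$ and $\Gr(3,n)$ with $n\leq 9$; the extension in~\cite{BdCS} to every $\Gr(2,n)$; the $2$-transitivity result of~\cite{SW_double} for every $\Gr(3,n)$; the full classifications of the $3501$ essential problems in $\Gr(4,8)$ in~\cite{MSJ,SW_double}; the $31{,}806$ essential problems in $\Gr(4,9)$ treated in the present article; the Derksen--Vakil infinite family~\cite[Section~3.14]{Va06b}; and the monodromy determinations of~\cite{HRS,LS09}. Together these constitute the corpus of explicitly classified Schubert Galois groups.

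Next I would verify condition~(2) by inspecting each entry of this catalog: whenever a Galois group in the corpus fails to be $S_N$, it is imprimitive, and since imprimitive permutation groups are not primitive they cannot be $2$-transitive. In $\Gr(4,8)$ and $\Gr(4,9)$ this is exactly the conclusion of the classifications---each enriched group is a wreath product $S_a \wr S_b$---and Section~\ref{S:geometry} supplies the geometric reason: each such Schubert problem fibres over a smaller Schubert problem in $\Gr(2,4)$ or $\Gr(2,5)$, yielding a nontrivial block system preserved by the Galois group. The same wreath-product structure was identified by Vakil for the Derksen family.

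The key step, and the main obstacle, is condition~(1): upgrading ``at least alternating'' to ``equal to the full symmetric group.'' Vakil's recursion only produces the former, and the gap must be closed by arithmetic input---computing cycle types of Frobenius elements in $\Gal(\overline{\QQ}/\QQ)$ acting on the solutions, a single odd cycle type forcing the group to contain an odd permutation and hence to equal $S_N$. For the $26{,}051$ essential problems in $\Gr(4,9)$ with fewer than $300$ solutions this can be carried out symbolically and is done in the present article; the remaining larger problems are known only to be at least alternating and so do not contribute to the ``known'' Galois groups governed by the theorem. Assembling these three steps yields the dichotomy for the entire catalog of presently classified Schubert Galois groups.
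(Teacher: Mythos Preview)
The paper does not supply a formal proof of this theorem; it is presented in the Introduction as an observed dichotomy distilled from the computations and classifications carried out in the paper and in prior work. Your overall strategy---cataloguing the sources and checking each against the two alternatives---is therefore the right one, and your treatment of alternative~(1) is reasonable, including the point that problems known only to be ``at least alternating'' are not among the \emph{known} Galois groups in the sense of the theorem.

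However, your argument for alternative~(2) contains a genuine error. You assert that every non-symmetric Schubert Galois group in the catalog is imprimitive, and then deduce failure of $2$-transitivity from imprimitivity. The paper explicitly contradicts the premise: immediately after the theorem it notes that ``Many, but not all, of the Schubert Galois groups that are not the full symmetric group are imprimitive,'' and gives the example from Vakil's family of $S_5$ acting on the ten $2{+}3$ partitions of $[5]$. That action is primitive but not $2$-transitive. So the route ``non-symmetric $\Rightarrow$ imprimitive $\Rightarrow$ not $2$-transitive'' fails at the first arrow. You also overstate matters by saying Vakil identified a wreath-product structure for the Derksen family; Derksen's own example is $S_4$ in an imprimitive action on equipartitions, not a wreath product $S_a\wr S_b$, and the further members of Vakil's family are $S_n$ acting on sets of partitions, which need not be imprimitive at all.

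The fix is straightforward: for alternative~(2), do not funnel everything through imprimitivity. Instead, list the actual permutation groups that occur among the enriched problems---the wreath products $S_a\wr S_b$ from the fibrations in $\Gr(4,8)$ and $\Gr(4,9)$, and the $S_n$-actions on partition sets in the Derksen--Vakil family---and verify directly (or by citing the paper's remarks) that none of these actions is $2$-transitive. For the wreath products this follows from imprimitivity; for the Vakil family it must be checked separately, and the paper's $S_5$-on-$10$ example shows that this separate check is genuinely necessary.
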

%%%%%%%%%%%%%%%%%%%%%%%%%%%%%%%%%%%%%%%%%%%%%%%%%%%%%%%%%%%%%%%%%%%%%%%%%%%%%%%%%

Many, but not all, of the Schubert Galois groups that are not the full symmetric group are imprimitive.
Recently, Esterov~\cite{Esterov} has shown a similar dichotomy for systems of sparse polynomial equations; either their
Galois group is full symmetric or it is imprimitive.

The known Schubert Galois groups are very particular permutation groups.
Most are symmetric groups $S_d$ acting naturally on the set $\defcolor{[d]}:=\{1,\dotsc,d\}$.
The Galois group of Derksen's example is the induced action of
$S_4$ on the six equipartitions of $[4]$, 
\[
  12|34\,,\ 13|24\,,\ 14|23\,,\ 23|14\,,\ 24|13\,,\ 34|12\,.
\] 
This action is imprimitive as it preserves the partition
\[
   \{\, 12|34\,,\, 34|12\,\}\ \sqcup\  
   \{\, 13|24\,,\, 24|13\,\}\ \sqcup\ 
   \{\, 14|23\,,\, 23|14\,\}\,. 
\]
Members of Vakil's infinite family have a similar action of $S_n$ on certain partitions of $[n]$.
Not all are imprimitive, for example, one has Galois group $S_5$ acting on the 10 partitions of $[5]$ into two
subsets, one of cardinality two and one of cardinality three.
This primitive permutation group is not $2$-transitive.
The remaining Galois groups we found are wreath products of two symmetric groups
$\defcolor{S_m\wr S_d}=(S_m)^d\rtimes S_d$. 
Each essential enriched Schubert problem in $\Gr(4,9)$ has Galois group 
one of $S_2\wr S_2$, $S_3\wr S_2$, $S_5\wr S_2$, or $S_2\wr S_3$.\smallskip

Mathematicians from Hilbert to Arnold have stimulated the development of mathematics by proposing to address the first
nontrivial or next unknown instance of a general question.
This investigation is in their spirit.
As it touches on geometry, combinatorics, number theory, and group theory, it represents  the unity of
mathematics. \smallskip

We begin in Section~\ref{S:background} by sketching some background, including Schubert problems in Grassmannians, Galois
groups of branched covers, and permutation groups.
In Section~\ref{S:computations} we discuss our computations, which give a lower bound for each Schubert Galois group.
Section~\ref{S:geometry} contains a detailed study of the structure of some Schubert problems, identifying
classes of enriched Schubert problems whose Galois group is a subgroup of a nontrivial wreath
product, and determining the Galois group of each enriched problem in $\Gr(4,9)$.
This explains general methods to study Schubert Galois groups and it points to a possible
classification of enriched Schubert problems in Grassmannians. 

Some of this paper is based on the 2017 Ph.D.\ thesis of Williams~\cite{Williams}.
The computations used a Maple script of Vakil\footnote{\tt http://math.stanford.edu/\~{}vakil/programs/galois}, as well as
software developed by the authors and  by Christopher Brooks, Aaron Moore, James Ruffo, and Luis Garc\'ia-Puente.

%%%%%%%%%%%%%%%%%%%%%%%%%%%%%%%%%%%%%%%%%%%%%%%%%%%%%%%%%%%%%%%%%%%%%%%%%%%%%%%%%
\section{Background}\label{S:background}

We work over the complex numbers, $\CC$, although our results hold for any algebraically closed field of characteristic
zero. 

%%%%%%%%%%%%%%%%%%%%%%%%%%%%%%%%%%%%%%%%%%%%%%%%%%%%%%%%%%%%%%%%%%%%%%%%%%%%%%%%%
\subsection{Schubert Calculus in Grassmannians}\label{SS:SchubertCalculus}
The Schubert calculus in Grassmannians concerns all problems of enumerating the linear subspaces of a vector space 
that satisfy incidence conditions imposed by other, fixed linear subspaces.
The simplest non-trivial Schubert problem asks for the 2-planes in $\CC^4$ that meet four
general 2-planes nontrivially.
Passing to lines in projective 3-space, this asks for the lines that meet four general lines.

To understand the solutions, consider first three pairwise skew lines \Blue{$\ell_1$}, \Red{$\ell_2$}, and \Green{$\ell_3$}. 
They lie on a unique hyperboloid (see Figure~\ref{F:4lines}).
%%%%%%%%%%%%%%%%%%%%%%%%%%%%%%%%%%%%%%%%%%%%%%%%%%%%%%%%%%%%%%%%%%%%%%%%%%%%%%%%%
\begin{figure}[htb]
%
%   If we need to squeeze a few more lines out for the page limit, it is possible
%   to shrink this by 10-20%.
%
\centerline{
  \begin{picture}(314,192)
   \put(3,0){\includegraphics[height=6.6cm]{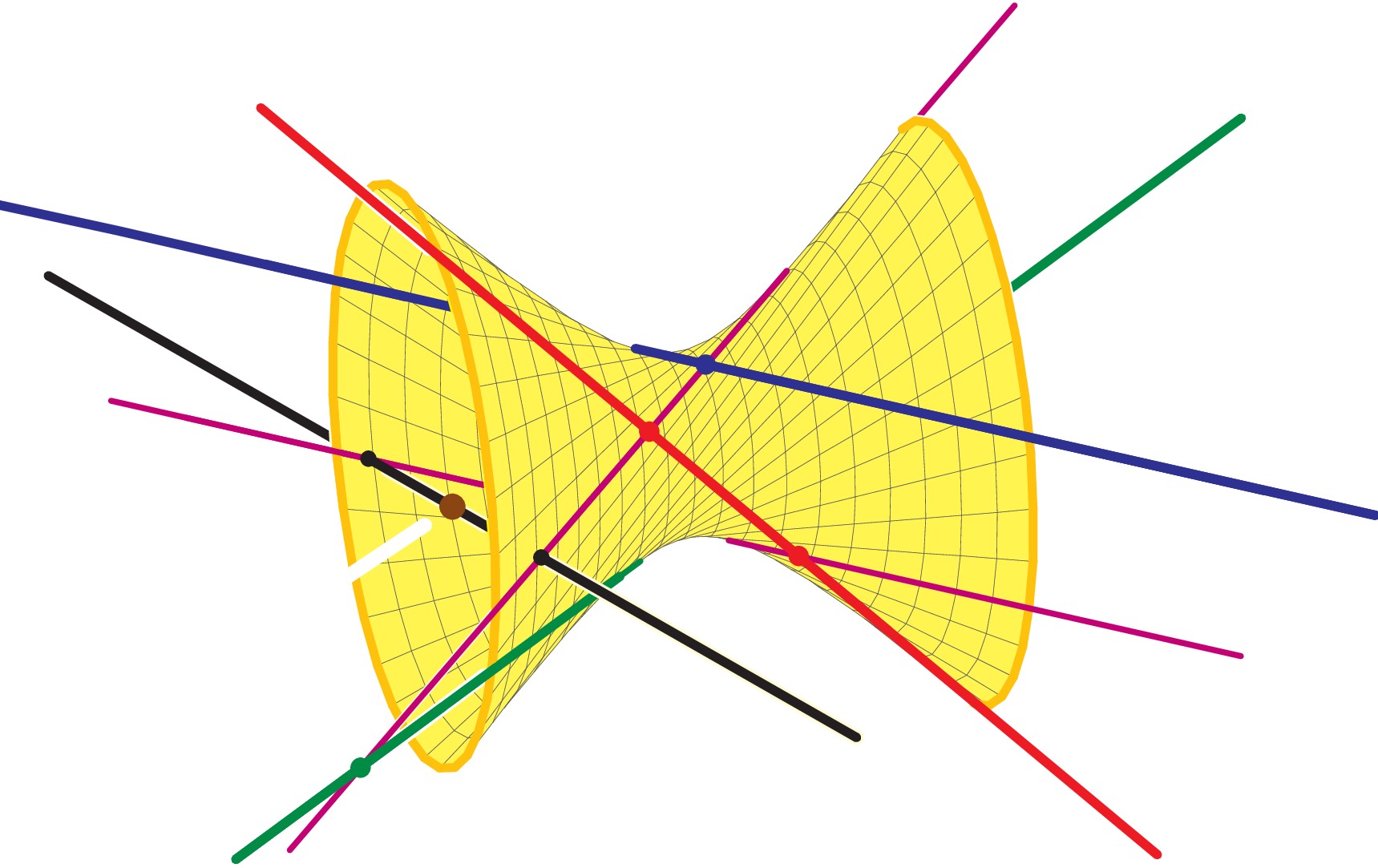}}
   \put(230,  3){\Red{$\ell_2$}}
   \put(288, 85){\Blue{$\ell_1$}}
   \put(266,147){\Green{$\ell_3$}}
   \put(  3,118){$\ell_4$}
   \put(263, 34){\Magenta{$m_1$}}
   \put(194,180){\Magenta{$m_2$}}
   \thicklines
  \put(30,31.2){\White{\line(3,2){66.6}}}
  \put(30,29.8){\White{\line(3,2){66.6}}}
%   \thinlines
   \put(29.5,30.5){\Brown{\vector(3,2){66.6}}}
   \put( 21.5, 25){\Brown{$p$}}
  \end{picture}
}
%   If a smaller picture is needed.......
%\centerline{
%  \begin{picture}(189,108)(-1,5)
%   \put(3,0){\includegraphics[height=4.1cm]{figures/4lines}}
%   \put(159,  5){\Red{$\ell_2$}}
%   \put(180, 53){\Blue{$\ell_1$}}
%   \put(159,103){\Green{$\ell_3$}}
%   \put(  2, 72){$\ell_4$}
%   \put(160, 22){\Magenta{$m_1$}}
%   \put(120,113){\Magenta{$m_2$}}
%
%   \thicklines
%   \put(21,20.1){\White{\line(3,2){39}}}
%   \thinlines
%   \put(21,20.1){\Brown{\vector(3,2){39}}}
%   \put(14, 16){\Brown{$p$}}
%  \end{picture}
%}
 \caption{Problem of four lines}\label{F:4lines}
\end{figure}
%%%%%%%%%%%%%%%%%%%%%%%%%%%%%%%%%%%%%%%%%%%%%%%%%%%%%%%%%%%%%%%%%%%%%%%%%%%%%%%%%
This hyperboloid has two rulings by lines:  \Blue{$\ell_1$}, \Red{$\ell_2$}, and \Green{$\ell_3$} lie in one, and
the lines that meet all three form the second ruling.
The fourth line, $\ell_4$, meets the hyperboloid in two points, and the lines \Magenta{$m_1$} and \Magenta{$m_2$}
in the second ruling through these points are the two solutions to our problem of four lines.

Since $m_1$ and $m_2$ lie in the same ruling, they do not meet.
Figure~\ref{F:4lines} also illustrates that the Galois group of this Schubert problem is the symmetric group $S_2$.
Rotating the line $\ell_4$ by $180^\circ$ about the point $p$ interchanges two solution lines, so the Galois
group contains a transposition.
For another way to see this, observe that if $\ell_4$ moves 
to become tangent to the hyperboloid, then  the two solution lines will coincide.
Let $q\in \ell_4$ be a point not on the hyperboloid and $H$ a plane containing $\ell_4$ that is not tangent to the hyperboloid.
  Then the pencil of lines $\defcolor{\PP(H/q)}:=\{\ell\mid q\in\ell\subset H\}$ contains two lines that are tangent to the 
  hyperboloid. (To see this in the real picture of Figure~\ref{F:4lines}, $q$ should lie outside of the hyperboloid.)
This implies that the local monodromy in the pencil near the tangent line contains a 2-cycle.
It also implies that letting any line move in a general pencil will produce a 2-cycle in the monodromy group.
We later use this observation to establish that some Galois groups are as large as possible.
This argument of deducing a simple transposition from a solution of multiplicity 2 was used by Harris~\cite{Ha79}
and by Byrnes and Stevens~\cite{BS_homotopy} to study Galois groups.

For future reference, if $\ell\subset\Lambda$ are linear subspaces of dimensions $r{-}1$ and $r{+}1$, then
  $\PP(\Lambda/\ell)\simeq\PP^1$ is the \demph{pencil} of $r$-planes $L$ such that $\ell\subset L\subset\Lambda$.

For additional material on the Schubert calculus on Grassmannians, see~\cite{Fu97}.
Let $V$ be a complex vector space of dimension $n$.
Write $\defcolor{\Gr(k,n)}$ or $\defcolor{\Gr(k,V)}$ for the Grassmannian of $k$-planes in $V$.
This is an algebraic manifold of dimension $k(n{-}k)$, and  $\Gr(k,n)$ is isomorphic to
$\Gr(n{-}k,n)$. 
The ambient space for the problem of four lines is $\Gr(2,4)$.
Incidence conditions on the $k$-planes of $\Gr(k,n)$ are indexed by
\demph{partitions}, which are weakly decreasing sequences of nonnegative integers
\[
   \lambda\ \colon\  n{-}k\ \geq\  \lambda_1\ \geq\ 
    \lambda_2\ \geq\ \dotsb\ \geq\ \lambda_k\ \geq\ 0\,.
\]
For example, both $(4,4,3,1)$ and $(3,1,0,0)$ are partitions for $\Gr(4,9)$.
Trailing 0s are often omitted.
We represent partitions by their Young diagrams, which are left-justified arrays of
boxes, with $\lambda_i$ boxes in row $i$.
For these partitions, we have 
\[
   (4,4,3,1)\ =\ \,\raisebox{-11pt}{\includegraphics{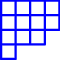}}
   \qquad\mbox{and}\qquad
   (3,1)\ =\ \,\raisebox{-4pt}{\includegraphics{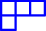}}\ .
\]
More generally, given positive integers $a<b$ and a partition $\lambda$, we say that $\lambda$ is a partition for $\Gr(a,b)$
when $\lambda_1\leq b{-}a$ and $\lambda_{a+1}=0$.

A condition $\lambda$ is imposed by a (complete) \demph{flag}, which is a sequence of linear subspaces,
 \begin{equation}\label{Eq:FullFlag}
   \defcolor{\Fdot}\ \colon\ 
   \{0\}\ \subsetneq\ F_1\ \subsetneq\ F_2\ \subsetneq\ \dotsb\ \subsetneq\ F_n\ =\ \CC^n\,,
 \end{equation}
where $\dim(F_j)=j$.
The condition $\lambda$ on a $k$-plane $H\in \Gr(k,n)$ imposed by the flag $\Fdot$ is
 \begin{equation}\label{Eq:SchubertCondition}
   \dim( H\cap F_{n-k+j-\lambda_j})\ \geq\ j 
   \qquad\mbox{for } j=1,\dotsc,k\,.
 \end{equation}
For $\lambda=(3,1)$ on $\Gr(4,9)$, as $9-4+1-3=3$ and $9-4+2-1=6$, this is
\[
    \dim( H\cap F_3)\geq 1
   \quad\mbox{ and }\quad
    \dim (H\cap F_6)\geq 2\,,
\]
as the conditions for the trailing 0s, $\dim (H\cap F_8)=3$ and $\dim( H\cap F_9)=4$, always hold.

The set of all $H\in \Gr(k,n)$ satisfying condition~\eqref{Eq:SchubertCondition} is the
\demph{Schubert variety} \defcolor{$\Omega_\lambda\Fdot$}.
This is an irreducible subvariety of $\Gr(k,n)$ of codimension $\defcolor{|\lambda|}:=\lambda_1+\dotsb+\lambda_k$.
In the problem of four lines, the condition that a 2-plane $H$ in $\CC^4$ meets the 2-dimensional subspace of
$\Fdot$ is encoded by the partition $(1,0)=\I$.
The Schubert variety $\Omega_\sI\Fdot$ is a hypersurface in the four-dimensional Grassmannian $\Gr(2,4)$.
Observe that only one of the subspaces in $\Fdot$ is needed to define  $\Omega_\sI\Fdot$ and only two of the subspaces in
the flag $\Fdot$ are used to define $\Omega_{(3,1)}\Fdot$. 
In general, only the subspaces of a flag $\Fdot$ corresponding to the corners
($\{j\mid \lambda_j>\lambda_{j+1}\}$) of the partition $\lambda$ are needed to define $\Omega_\lambda\Fdot$.
%%%%%%%%%%%%
When a Schubert variety is defined by specifying the necessary subspaces, we may write them in place of the
complete flag $\Fdot$.
Thus, we may write $\Omega_\sI F_2$ or $\Omega_\sI(F_2\subset F_4)$ instead of $\Omega_\sI\Fdot$
for the same hypersurface in $\Gr(2,4)$. 
%%%%%%%%%%%%%

A \demph{Schubert problem} in $\Gr(k,n)$ is a list  $\defcolor{\blambda}=(\lambda^1,\dotsc,\lambda^s)$ of partitions with 
$k(n{-}k)=|\lambda^1|+\dotsb+|\lambda^s|$.
The problem of four lines is the Schubert problem $(\I,\I,\I,\I)$, which we write in multiplicative form as $\I^4$;
the exponent $4$ indicates that $\I$ is repeated four times.
A Schubert problem is \demph{simple} if all except one or two of its conditions are hypersurface
Schubert conditions, $\I$.
Using the multiplicative notation, a simple Schubert problem in
$\Gr(k,n)$ has the form $\lambda\cdot\mu\cdot\I^{s-2}$, where $k(n{-}k)=|\lambda|+|\mu|+s{-}2$.

Let $\blambda$ be a Schubert problem in $\Gr(k,n)$.
An \demph{instance} of $\blambda$ is given by a choice  $\defcolor{\calFdot}:=(\Fdot^1,\dotsc,\Fdot^s)$ of flags, and it
corresponds to the intersection
\begin{equation}\label{Eq:intersection}
  \defcolor{\Omega_{\blambda}\calFdot}\ :=\ 
   \Omega_{\lambda^1}\Fdot^1 \,\bigcap\, 
   \Omega_{\lambda^2}\Fdot^2 \,\bigcap\, \dotsb \,\bigcap\, 
   \Omega_{\lambda^s}\Fdot^s\,. 
 \end{equation}
If the flags in $\calFdot$ are general, then this intersection is transverse~\cite{K74} and consists of finitely many
points.
A consequence of transversality is that if $\calFdot$ is general, then the inequalities in~\eqref{Eq:SchubertCondition}
hold with equality.
This number \demph{$d(\blambda)$}  of points in the intersection~\eqref{Eq:intersection} is independent of the choice of general flags.
It may be computed using algorithms from the Schubert calculus~\cite{Fu97,KL72}. 
For brevity, we sometimes denote Schubert problems by $\blambda = d(\blambda)$ to specify its number of solutions, e.g.,
the problem of four lines can be denoted as $\I^4 = 2$, to indicate that the problem has two solutions.
%########################################################
%> interface(quiet=true):
%Elapsed  Time :=10992.49  seconds
%NumberofProblems:=81533
A Schubert problem $\blambda$ is \demph{nontrivial} if $d(\blambda)>1$.
There are $81,533$ nontrivial Schubert problems in $\Gr(4,9)$.
We have a Maple script\footnote{Available at {\tt http://www.math.tamu.edu/\~{}sottile/research/stories/GIVIX}}
that takes about 3 hours to enumerate these and compute $d(\blambda)$.\smallskip

%%%%%%%%%%%%%%%%%%%%%%%%%%%%%%%%%%%%%%%%%%%%%%%%%%%%%%%%%%%%%%%%%%%%%%%%%%%%%%%%%

The set \defcolor{$\Fln$} of all flags in $\CC^n$ is a smooth irreducible rational algebraic variety of dimension
$\binom{n}{2}$. 
Given a Schubert problem $\blambda=(\lambda^1,\dotsc,\lambda^s)$, we have the incidence variety
 \begin{equation}\label{Eq:TotalSpace}
  \raisebox{-35pt}{\begin{picture}(307,75)(-5,-4)
   \put(39,60){$\defcolor{\calX_\blambda}\ :=\ 
           \{(H,\Fdot^1,\dotsc,\Fdot^s)\mid H\in\Omega_{\lambda^i}\Fdot^i\ \mbox{ for}\ i=1,\dotsc,s\}$\,.}
   \put(40,55){\vector(-1,-2){20}}   \put(50,55){\vector(1,-2){20}}
   \put(20,33){$p$} \put(65,33){$\pi$}
   \put(-8,0){$\Gr(k,n)$}   \put(60,0){$(\Fln)^s$}
   \end{picture}}
 \end{equation}
%
%  This is explained, in slightly different notation
%
The total space $\calX_\blambda$ is irreducible as the map  $p\colon\calX_\blambda\to \Gr(k,n)$ is a fiber bundle with
irreducible fibers.
This is explained  with slightly different notation in~\cite[Section~2.2]{SW_double} as follows:
For any point $H\in\Gr(k,n)$ and any partition $\lambda$, the set of flags $\Fdot\in\Fl_n$ such that 
$H\in\Omega_\lambda\Fdot$ forms a Schubert subvariety \defcolor{$\Psi_\lambda H$} of $\Fl_n$, which is irreducible.
The inverse image $p^{-1}(H)$ of $H\in\Gr(k,n)$ is the product of the Schubert varieties $\Psi_{\lambda^i}H$, one in each
of the flag manifold factors, and  is thus irreducible. 
For an $s$-tuple $\calFdot=(\Fdot^1,\dotsc,\Fdot^s)$ of flags, the fiber $p(\pi^{-1}(\calF))$ is the
intersection~\eqref{Eq:intersection}, which forms the subvariety  $\Omega_{\blambda}\calFdot$.
Thus $\calX_\blambda\to(\Fln)^s$ is the family of all instances of the Schubert problem $\blambda$.
By Kleiman's Theorem~\cite{K74}, there is a dense open subset $U\subset(\Fln)^s$ consisting of $s$-tuples of flags for
which the intersection~\eqref{Eq:intersection} is transverse and consists of $d(\blambda)$ reduced points.
Over the set $U$, the map $\pi\colon\calX_\blambda\to(\Fln)^s$ is a covering space of degree $d(\blambda)$.

The variety $\calX_\blambda\subset \Gr(k,n)\times(\Fln)^s$ may be defined in local coordinates by
polynomials with integer coefficients. 
The dimension conditions~\eqref{Eq:SchubertCondition} are formulated as rank conditions on matrices, which
become the vanishing of minors of matrices whose entries are variables or constants.
This and other formulations are explained in~\cite{HaHS,HS_Sq,LMSVV,MSJ}.\smallskip

It is sometimes the case that Schubert problems $\blambda$ and $\bmu$ in possibly different Grassmannians are equivalent in
the following way:
Given a general instance $\calFdot$ of $\blambda$, there is an instance $\calEdot$ of $\bmu$ and a natural bijection
involving linear algebraic constructions of the two sets of solutions $\Omega_\blambda\calFdot$ and $\Omega_\bmu\calEdot$.
Finally, all general instances $\calEdot$ of $\bmu$ occur in this way.

%%%%%%%%%%%%%%%%%%%%%%%%%%%%%%%%%%%%%%%%%%%%%%%%%%%%%%%%%%%%%%%%%%%%%%%%%%%%%%%%%
\begin{example}\label{Ex:first_reduction}
  Consider the Schubert problem $\T^2\cdot\I^2=2$ in $\Gr(2,5)$.
  An instance of it is given by two 2-planes, $L_1,L_2$, and two 3-planes $\Lambda_3,\Lambda_4$ in $\CC^5$, and the
  solutions are those 2-planes $H$ that meet each of these non-trivially. 
  We write this instance as
  \[
    \Omega_{\sT}L_1 \,\cap\, \Omega_{\sT}L_2\,\cap\, \Omega_{\sI}\Lambda_3\,\cap\,\Omega_{\sI}\Lambda_4\,.
  \]
  Assuming that the linear subspaces are general, a solution $H$ is spanned by its intersections with any two of these
  linear spaces.
  Thus $H\subset\langle L_1, L_2\rangle=:M\simeq\CC^4$.
  The 2-plane $H$ also meets each of $L_3:=\Lambda_3\cap M$ and  $L_4:=\Lambda_4\cap M$, which are 2-planes.
  Thus $H$ is a solution to the instance of the Schubert problem $\I^4=2$ in $\Gr(2,M)$ given by $L_1,\dotsc,L_4$.
  Lastly, all general instances of $\I^4=2$ on $\Gr(2,M)$ occur in this way. \hfill$\diamond$
\end{example}
%%%%%%%%%%%%%%%%%%%%%%%%%%%%%%%%%%%%%%%%%%%%%%%%%%%%%%%%%%%%%%%%%%%%%%%%%%%%%%%%%

We do not aim to classify Schubert problems up to this notion of equivalence---a consequence of the arguments
in~\cite{BCDLT} is that all Schubert problems $\blambda$ with $\delta(\blambda)=1$ are equivalent.
We will however use equivalent in a restricted sense.
In Example~\ref{Ex:first_reduction},  $\T^2\cdot\I^2=2$ in $\Gr(2,5)$ was shown to be equivalent to $\I^4=2$ in 
$\Gr(2,4)$.
Two of the conditions, $\T$ and $\T$, imposed a linear algebraic condition on solutions which reduced solving the original
problem to solving an instance of $\I^4=2$ in a $\Gr(2,4)$.
Passing from one Schubert problem to an equivalent problem in a Grassmannian of a smaller dimensional
vector space 
coming from linear algebraic constraints imposed by one or two conditions was crucial for the arguments in~\cite{BdCS}
and~\cite{SW_double}.
We use the word \demph{reduction} to refer to this process.

In~\cite[Proposition~6]{SW_double}, it was shown that such a reduction was possible for a Schubert problem $\blambda$ if
there were partitions $\mu,\nu$ from $\blambda$ such that one of the following did not hold:
 \begin{equation}\label{Eq:essential}
  \begin{tabular}{rcl}
   (a)&\ &$\mu_1<n-k$, \\
   (b)&& $\mu_k=0$, \\
   (c)&& $\mu_i+\nu_{k+1-i}< n-k$ for every $i=1,\dotsc,k$, and \\
   (d)&& $\mu_i+\nu_{k-i}\leq n-k$ for every $i=1,\dotsc,k{-}1$.
  \end{tabular}
 \end{equation}
 For example, in the Schubert problem  $\T^2\cdot\I^2=2$ on $\Gr(2,5)$, the first two partitions do not satisfy (d) for
 $i=1$, as when $\mu=\nu=\T$,  $\mu_1+\nu_1=2+2\not\leq 3$---this is the source of the reduction presented in
 Example~\ref{Ex:first_reduction}.
 A Schubert problem $\blambda$ in $\Gr(k,n)$ is \demph{essential} if condition~\eqref{Eq:essential} holds for every
 $\mu,\nu$ from  $\blambda$, so that it is not equivalent to a Schubert problem on a smaller Grassmannian
   in this way\footnote{In~\cite{SW_double} the term `reduced' is used instead of `essential'.}. 
 On $\Gr(4,9)$ only $31,806$ of the $81,533$ nontrivial Schubert problems are essential.
%
%On Bloc601k   to compute the problems with at least four solutions, including the non-essential ones
%Elapsed  Time := 1398.68  seconds
%NumberofProblems:=47909
%
% On Mac laptop all essential problems with at least 2 solutions
%Elapsed  Time := 1430.30  seconds
%NumberofProblems:=31806
%
% On Mac laptop all essential problems with at least 4 solutions
%Elapsed  Time := 1380.33  seconds
%NumberofProblems:=30784

%%%%%%%%%%%%%%%%%%%%%%%%%%%%%%%%%%%%%%%%%%%%%%%%%%%%%%%%%%%%%%%%%%%%%%%%%%%%%%%%%
\subsection{Galois groups of branched covers}\label{SS:Galois}
Suppose that $\pi\colon X\to Y$ is a dominant map of complex irreducible varieties of the same dimension.
Then there is a positive integer $d$ and a dense open subset $U\subset Y$ consisting of regular
values $u$ of $\pi$ whose fiber $\pi^{-1}(u)$ consists of $d$ reduced points.
Thus, over $U$, the map $\pi\colon \pi^{-1}(U)\to U$ is a covering space of degree $d$.
Call $\pi\colon X\to Y$ a \demph{branched cover} of degree $d$.

Since $\pi(X)$ is dense in $Y$ and both varieties are irreducible, there is an inclusion of function fields
$\pi^*\colon\CC(Y)\hookrightarrow\CC(X)$. 
As the map $\pi$ has degree $d$, the field extension $\CC(X)/\pi^*\CC(Y)$ has degree $d$.
Let \defcolor{$\Gal_{\pi}$} be the Galois group of the Galois closure of $\CC(X)$ over $\pi^*\CC(Y)$.
Harris~\cite{Ha79} showed that
this is also the monodromy group of the covering space $\pi\colon \pi^{-1}(U)\to U$.
The monodromy group acts transitively on a fiber.
Indeed, as $X$ is irreducible, $\pi^{-1}(U)$ is path-connected.
Then for any two points $x,x'$ in a given fiber $\pi^{-1}(u)$ over a point $u\in U$, there is a path in $\pi^{-1}(U)$
connecting $x$ to $x'$.
Its image in $U$ is a loop based at $u$ whose corresponding monodromy permutation sends $x$ to $x'$.

Branched covers $\pi\colon X\to Y$ are common in enumerative geometry and are the source of Jordan's observation that
enumerative problems have Galois groups~\cite{J1870}. 
If $Z\subset Y$ is an irreducible subvariety that meets the regular locus $U$ of $\pi$, then $\pi^{-1}(Z)\to Z$ (or rather
the closure of $\pi^{-1}(Z\cap U)$ in $X$) is a branched cover and its monodromy group is a subgroup of $\Gal_{\pi}$.
This holds even if $\pi^{-1}(Z)$ is reducible.

The realization that the algebraic Galois group is the geometric monodromy group goes back at least to
Hermite~\cite{Hermite}, with a modern treatment given in~\cite{Ha79}.
Vakil~\cite[Section~3.5]{Va06b} gives a purely algebraic construction of 
a group $\Mon_\pi$ of a branched cover over any field.
When the field is $\CC$, this is equal to the monodromy group of the branched cover.
When $\pi\colon X\to Y$ is defined over $\QQ$, so that the complex varieties are obtained by extending scalars from $\QQ$
to $\CC$, then Vakil's construction shows that $\Mon_\pi$ is equal to the Galois group for
$X(\overline{\QQ})\to Y(\overline{\QQ})$, where $\overline{\QQ}$ is the algebraic closure of $\QQ$.
This version of Galois equals monodromy is also explained in~\cite[Sect.~1.1]{GGEGA}.

We may similarly define a Galois group $\Gal_\pi(\QQ)$ using the Galois 
closure $L$ of $\QQ(X)$ over $\pi^*(\QQ(Y))$.
If $K=L\cap\CC$, then Vakil's construction shows that $\Gal_\pi=\Gal(L/K(Y))$, so that the function field
  $K(Y)=\QQ(Y)\otimes K$
  is the $\Gal_\pi$-fixed field of $L$, which shows that $\Gal_\pi$ is a normal subgroup of $\Gal_\pi(\QQ)$, and
  it equals $\Gal_\pi(\QQ)$ if and only if $L$ does not contain scalars, in that  $\QQ= L\cap\CC$.
For example, when $Y=\A^1$ and $X=\calV(x^3-y)$, then $\Gal_\pi=\ZZ/3\ZZ$, but $\Gal_\pi(\QQ)=S_3$, the
difference being that the primitive third roots of unity lie in $\CC$ and not in $\QQ$.
%
%  This needs an extensive explanation.  It is standard field theory, though.
%
This distinction will be important in Section~\ref{SS:Frobnenius}. %which distinction??

The \demph{Schubert Galois group} \demph{$\Gal_\blambda$} of a Schubert problem $\blambda$ is
the Galois group of the branched cover $\pi\colon\calX_\blambda\to(\Fln)^s$ defined in~\eqref{Eq:TotalSpace}.
Choosing a regular value $\calFdot\in(\Fln)^s$ so that $\pi^{-1}(\calFdot)$  consists of $d(\blambda)$ reduced points, 
the group $\Gal_\blambda$ is a transitive subgroup of the symmetric group $S_{d(\blambda)}$. 
The Grassmannian and flag manifold are rational varieties defined over $\QQ$, so we also have the group
$\Gal_\blambda(\QQ)$ which contains $\Gal_\blambda$ as a normal subgroup.
Based on the results of Sections~\ref{SS:Frobnenius} and~\ref{S:geometry}, we make the following conjecture.

%%%%%%%%%%%%%%%%%%%%%%%%%%%%%%%%%%%%%%%%%%%%%%%%%%%%%%%%%%%%%%%%%%%%%%%%%%%%%%%%%
\begin{conjecture}\label{Conj:one}
  For any Schubert problem $\blambda$, $\Gal_{\blambda}(\QQ)=\Gal_{\blambda}$.
\end{conjecture}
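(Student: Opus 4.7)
The conjecture asserts that the field of constants $K := L \cap \CC$ of the Galois closure $L$ of $\QQ(\calX_\blambda)$ over $\QQ((\Fln)^s)$ coincides with $\QQ$. Since the excerpt establishes that $\Gal_\blambda \trianglelefteq \Gal_\blambda(\QQ)$ with quotient isomorphic to $\Gal(K/\QQ)$, the plan is to reformulate the conjecture as a geometric irreducibility statement: the Galois closure of the branched cover $\pi\colon \calX_\blambda \to (\Fln)^s$, viewed as a $\QQ$-variety, should be geometrically (i.e., absolutely) irreducible.

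First I would construct the Galois closure explicitly as the irreducible component $\widetilde{\calX}_\blambda$ of the iterated fiber power $\calX_\blambda \times_{(\Fln)^s} \cdots \times_{(\Fln)^s} \calX_\blambda$ (with $d(\blambda)$ factors) that contains the generic ordered tuple of distinct solutions, removing the big diagonal. This component is defined over $\QQ$, and the conjecture is equivalent to its absolute irreducibility. To verify this, I would exploit the abundance of $\QQ$-rational points on $(\Fln)^s$: it suffices to find a single $\QQ$-rational flag tuple $\calFdot$ whose splitting field (generated by the coordinates of all $d(\blambda)$ solutions) has trivial intersection with $K$. In particular, a $\QQ$-rational instance with all solutions $\QQ$-rational would force $K = \QQ$ outright, since every constant in $L$ must specialize into the residue field at any $\QQ$-point of good reduction.

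To produce such instances systematically, the most promising tool is Vakil's geometric Littlewood-Richardson rule, whose checkerboard degenerations are defined over $\QQ$ and yield explicit limit configurations with combinatorially transparent solution sets. An inductive argument along a degeneration tree, starting from trivial Schubert problems and tracking $\QQ$-rationality of solutions at each step, could exhibit the required specialization. Alternatively, for Schubert problems that fiber over simpler ones, as identified in Section~\ref{S:geometry} and responsible for every enriched example in $\Gr(4,9)$, one may attempt to transfer the conjecture through the fibration by induction on the number of solutions, using that pushing a fibration down preserves fields of constants.

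The main obstacle is that it is not known a priori that every Schubert problem admits a $\QQ$-rational instance whose solutions are all $\QQ$-rational; this is essentially an inverse Galois problem for Schubert calculus turned on its head, and in other classical settings (the 27 lines on a smooth cubic over $\QQ$, for instance) the analog can fail. A more realistic route may weaken the requirement, allowing specializations to number fields $F/\QQ$ and combining Chebotarev density with the computed Frobenius cycle-type data (as summarized for problems in $\Gr(4,9)$ with fewer than $300$ solutions) to pin down $K$ case by case. Even granting absolute irreducibility of $\widetilde{\calX}_\blambda$, one must still rule out residual arithmetic twists, which for the imprimitive groups $S_m \wr S_d$ appearing in Section~\ref{S:geometry} requires verifying that the wreath-product structure itself is defined over $\QQ$ at the level of the fibration.
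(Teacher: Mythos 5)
This statement is labeled a \emph{conjecture} in the paper, and the paper offers no proof of it---only circumstantial evidence from the Frobenius computations of Section~\ref{SS:Frobnenius} and the agreement of the computed groups over $\QQ$ with the wreath-product upper bounds established over $\CC$ in Section~\ref{S:geometry}. There is therefore no ``paper's own proof'' to compare your attempt against, and indeed your proposal is not a proof either: you explicitly flag, correctly, that its central ingredient is not available.

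That said, your reformulation is sound and worth recording. Writing $L$ for the Galois closure of $\QQ(\calX_\blambda)$ over $\pi^*\QQ((\Fln)^s)$ and $K=L\cap\overline\QQ$ for its field of constants, the conjecture is exactly $K=\QQ$, which is equivalent to geometric (absolute) irreducibility of the $\QQ$-variety $\widetilde{\calX}_\blambda$ you construct from the fiber power minus the big diagonal. Your proposed sufficient criterion---a $\QQ$-rational tuple of flags, a regular value of $\pi$, over which all $d(\blambda)$ solutions are $\QQ$-rational---does force $K=\QQ$, with the small caveat that one should use a $\QQ$-point of $\widetilde{\calX}_\blambda$ that is smooth, since a smooth rational point lies on exactly one geometric component while the $\Gal(K/\QQ)$-conjugate components are permuted transitively. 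But, as you note, it is entirely open whether such a fully $\QQ$-split instance exists for a general Schubert problem; this is itself a difficult rationality question (compare the 27 lines on a cubic surface, which generically are \emph{not} all rational over the field of definition). The alternative routes you sketch---descending through Vakil's checkerboard degenerations, or transferring the conjecture across the fibrations of Section~\ref{S:geometry}, or combining Chebotarev density with observed Frobenius cycle types---are reasonable directions of attack, and the fibration route is probably the most tractable for the enriched problems catalogued here, but none is carried through. Your closing remark about residual arithmetic twists in the imprimitive case is apt: even knowing $\Gal_\blambda=S_m\wr S_d$ over $\CC$ and a transitive $\Gal_\blambda(\QQ)$ containing it does not by itself preclude a strictly larger $\Gal_\blambda(\QQ)$ normalizing the wreath product, so some genuinely new input is required. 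In summary: the approach is correctly framed and the gaps are correctly identified, but this remains an outline, consistent with the fact that the paper itself leaves the statement as a conjecture.
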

%%%%%%%%%%%%%%%%%%%%%%%%%%%%%%%%%%%%%%%%%%%%%%%%%%%%%%%%%%%%%%%%%%%%%%%%%%%%%%%%%

The following simple proposition, in particular constructions appearing in its proof, will be
important to establish the results of Section~\ref{S:geometry}.

%%%%%%%%%%%%%%%%%%%%%%%%%%%%%%%%%%%%%%%%%%%%%%%%%%%%%%%%%%%%%%%%%%%%%%%%%%%%%%%%%
\begin{proposition}\label{P:Two}
  The Galois groups of\/ $\I^4=2$ in $\Gr(2,4)$, as well as
  $\raisebox{-3.5pt}{\TI}\cdot \I^3=2$, $\raisebox{-3.5pt}{\II}\cdot\I^4=2$,
  $\T^2\cdot\I^2=2$, $\T\cdot\I^4=3$, and $\I^6=5$ in $\Gr(2,5)$
  are all the corresponding symmetric group. 
\end{proposition}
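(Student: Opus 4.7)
The plan has two main ingredients: establishing transitivity from irreducibility of the total space, and, for the problems of degree greater than two, producing a single transposition in the Galois group by a local monodromy argument modelled on the hyperboloid picture in the text. As noted in the discussion around~\eqref{Eq:TotalSpace}, the projection $p\colon\calX_\blambda\to\Gr(k,n)$ is a fiber bundle with irreducible fibers, so $\calX_\blambda$ is irreducible and therefore $\Gal_\blambda$ acts transitively on a fiber of $\pi$ over any regular value. This alone settles the four problems with $d(\blambda)=2$: namely $\I^4$ on $\Gr(2,4)$ together with $\raisebox{-5.5pt}{\TI}\cdot\I^3$, $\II\cdot\I^4$, and $\T^2\cdot\I^2$ on $\Gr(2,5)$, each has Galois group $S_2$ since any transitive subgroup of $S_2$ is the full group.

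For the remaining two problems I would use elementary group theory to reduce the task to exhibiting a single transposition. The transitive subgroups of $S_3$ are $\ZZ/3\ZZ$ and $S_3$, so $\T\cdot\I^4=3$ has full symmetric Galois group as soon as $\Gal_\blambda$ contains an involution. The transitive subgroups of $S_5$ are $\ZZ/5\ZZ$, the dihedral group $D_5$, the Frobenius group $F_{20}=\mathrm{AGL}_1(\mathbb{F}_5)$, $A_5$, and $S_5$; an inspection of their order-two elements shows that $S_5$ is the only one containing a transposition, since the involutions of $D_5$ and $F_{20}$ act as products of two disjoint $2$-cycles in degree five, $\ZZ/5\ZZ$ has no involutions, and $A_5$ contains no odd permutation. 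Hence for $\I^6=5$ it likewise suffices to produce a single transposition.

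To produce the required transposition I would adapt the hyperboloid argument of Section~\ref{SS:SchubertCalculus}. Starting from a generic instance with $d(\blambda)$ simple solutions, move one flag along a one-parameter pencil so that at a distinguished value of the parameter exactly two of the solutions collide at an ordinary double point (a Morse-type tangency), while the remaining $d(\blambda)-2$ solutions stay simple and pairwise distinct. Near the distinguished parameter the branched cover $\pi$ is then locally the disjoint union of an unramified cover of degree $d(\blambda)-2$ and a simple two-sheeted cover branched at one point; a small loop in the pencil around the branch value therefore contributes a transposition to $\Gal_\blambda$ by pullback under $\pi$, as explained in Section~\ref{SS:Galois}.

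The main obstacle is certifying the Morse-type tangency for $\T\cdot\I^4=3$ and, more substantively, for $\I^6=5$, where the intersection geometry is less transparent than in $\Gr(2,4)$. I would handle this by an explicit local computation in Pl\"ucker or Stiefel coordinates along a carefully chosen pencil, showing that the discriminant of the resulting one-parameter system is a polynomial with a simple root at which only two sheets coalesce; alternatively, one can apply Vakil's geometric Littlewood--Richardson rule~\cite{Va06b} to certify that $\Gal_\blambda$ is at least alternating, in which case any single odd permutation (extractable, for instance, by numerical monodromy on a random instance) upgrades the group to $S_{d(\blambda)}$.
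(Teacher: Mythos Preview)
Your handling of the four degree-$2$ problems is correct and in fact cleaner than the paper's: transitivity of $\Gal_\blambda$ is immediate from irreducibility of $\calX_\blambda$, and that alone forces $\Gal_\blambda=S_2$. The paper instead reduces the three $\Gr(2,5)$ problems with two solutions to $\I^4$ in $\Gr(2,4)$ via the essential/non-essential machinery, which is unnecessary here. Likewise, your group-theoretic reduction for $d=3$ and $d=5$---classifying transitive subgroups of $S_3$ and $S_5$ and checking which contain a $2$-cycle---is a perfectly good substitute for the paper's citation of~\cite{BdCS} (which gives ``at least alternating'' for all of $\Gr(2,n)$); your route is more self-contained, theirs generalizes.

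The genuine gap is in the last two paragraphs: you correctly identify that everything hinges on producing an instance with a unique double point for $\T\cdot\I^4=3$ and $\I^6=5$, but you do not actually produce one. Proposing an explicit coordinate computation of the discriminant, or invoking Vakil's criterion plus numerical monodromy, is not a proof---it is a description of two possible proof strategies, neither carried out. The paper's technique here is worth knowing: rather than computing discriminants directly, it places some of the flags in \emph{special position} so that the relevant Schubert variety becomes reducible. For $\T\cdot\I^4$, letting $F^1_2$ meet $F^2_3$ in a line $\ell$ gives $\Omega_{\sT}F^1_2\cap\Omega_{\sI}F^2_3=\Omega_{\sTh}\ell\cup\Omega_{\sTI}(F^1_2\subset\Lambda)$, splitting the problem into a subproblem with one solution and an instance of $\raisebox{-2pt}{\mTI}\cdot\I^3=2$; then a pencil in one of the remaining $3$-planes produces the required double point in the two-solution piece while the isolated solution stays simple. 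For $\I^6=5$, two such degenerations split the problem into three subproblems with one solution each and one instance of $\T^2\cdot\I^2=2$, and the same pencil argument applies. This ``degenerate-and-decompose'' move is the missing idea in your sketch; it converts the existence of a simple tangency from an analytic question into a consequence of already-established cases.
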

%%%%%%%%%%%%%%%%%%%%%%%%%%%%%%%%%%%%%%%%%%%%%%%%%%%%%%%%%%%%%%%%%%%%%%%%%%%%%%%%%

These are all the Schubert problems $\bnu$ in $\Gr(2,4)$ and $\Gr(2,5)$ with $d(\bnu)>1$.

%%%%%%%%%%%%%%%%%%%%%%%%%%%%%%%%%%%%%%%%%%%%%%%%%%%%%%%%%%%%%%%%%%%%%%%%%%%%%%%%%
\begin{proof}
  All Schubert Galois groups in $\Gr(2,n)$ contain the alternating group~\cite{BdCS}.
  We show that each contains a transposition, which will complete the proof.
  As these are monodromy groups of branched covers of degree $d$, any sufficiently small loop in the base around a point
  whose fiber consists of $d{-}1$ distinct points, so that one point is a double point,
  induces a simple transposition~\cite[Section~II.3]{Ha79}.
  For the branched cover of a Schubert problem 
  %
  % Any sufficiently small loop in the base around such a simple double point will induce a
  % simple transposition.  We included some citations to the literature on this point.
  %
  %  Harris/Byrnes/Esterov, all use this.
  %
  %  Harris has this statement as a Lemma in page 698
  %
 $\calX_\blambda\to(\Fln)^s$, 
 it will suffice to show there is a choice of flags $\calFdot$ whose corresponding instance $\Omega_{\blambda}\calFdot$ 
 has a unique double solution.
 We describe a configuration of flags with one flag lying in a pencil such that the pencil of
 instances contains an instance with a double solution.

 For $\I^4=2$ in $\Gr(2,4)$, note that only the two-dimensional subspaces of the flags matter.
 As explained following Figure~\ref{F:4lines}, if we start with a configuration of four flags (lines
 $\ell_1\dotsc,\ell_4$ in Figure~\ref{F:4lines}) and then let $\ell_4$ move in a general pencil, the two members of
 that pencil that are tangent to the quadric each give an instance with a double solution.

 The Schubert problems in $\Gr(2,5)$ with two solutions are not essential in the sense of~\cite[Proposition~6]{SW_double}:
 For each, one of (b), (c), or (d) of~\eqref{Eq:essential} does not hold, and  
 they reduce to the only non-trivial Schubert problem $\I^4=2$ in a $\Gr(2,4)$.
 We showed this for  $\T^2\cdot\I^2=2$ in Example~\ref{Ex:first_reduction}.
 Thus, their Galois groups are isomorphic to that of $\I^4=2$ by~\cite[Proposition~6]{SW_double}.
 This completes the proof in these cases.

 For  $\T\cdot\I^4=3$, the relevant subspaces in the flags $\Fdot^1,\dotsc,\Fdot^5$ are
 $F^1_2,F^2_3,\dotsc,F^5_3$, and the Schubert problem asks for the
 2-planes $H$ that have a nontrivial intersection with each.
 If the first two subspaces are in a degenerate configuration where $\ell:=F^1_2\cap F^2_3$ has dimension 1 so that
 $\Lambda:=\langle F^1_2,F^2_3\rangle$ has dimension 4, then
 \[
   \Omega_{\sT}F^1_2\ \cap\ \Omega_{\sI} F^2_3\ =\ \Omega_{\sTh}\ell \ \cup\ \Omega_{\sTI}(F^1_2\subset\Lambda)\,,
 \]
 %
 %  We will add a remark on this use of terminology.  Also refer to Schubert and to G2n: Brooks, Abraham, and Frank
 %
 so that the Schubert problem breaks into two subproblems.
 The one involving $\Omega_{\sTh}\ell$ has the unique solution
 $\langle \ell,F^3_3\rangle\cap\langle \ell,F_3^4\rangle\cap\langle \ell,F_3^5\rangle$, while the one involving
 $\Omega_{\sTI}(F^1_2\subset\Lambda)$ is an instance of $\raisebox{-2pt}{\mTI}\cdot\I^3=2$.
 This is equivalent to an instance of $\I^4=2$ on $\Gr(2,\Lambda)\simeq\Gr(2,4)$, namely, the $2$-planes in $\Lambda$ that meet 
 each of the four $2$-planes $F^1_2$, $F^3_3\cap\Lambda$, $F^4_3\cap\Lambda$, $F^5_3\cap\Lambda$.
 Because all instances of $\I^4=2$ on $\Gr(2,\Lambda)$ may occur in this way, there is an instance with a double solution, which
 proves this case.
 %
 % This is very important and related to the double solution.
 %  We need to make this explicit, perhaps by a lemma or a strong remark. 
 %  Done with a strong remark.
 %

 Finally, the Schubert problem $\I^6=5$ asks for the 2-planes $H$ that meet each of six 3-planes $F^1_3,\dotsc,F^6_3$.
 Suppose that $F^1_3$ and $F^2_3$ are in degenerate position so that $\defcolor{L^{12}}:=F^1_3\cap F^2_3$ is a 2-plane and
 $\defcolor{\Lambda^{12}}:=\langle F^1_3, F^2_3\rangle$ is a 4-plane.
 Then $\Omega_{\sI}\Fdot^1\cap\Omega_{\sI}\Fdot^2=\Omega_{\sII}\Lambda^{12}\cup\Omega_{\sT}L^{12}$.
 Supposing also that $F^3_3$ and $F^4_3$ are in a similar degenerate position, and we define  $\defcolor{L^{34}}$ and
 $\defcolor{\Lambda^{34}}$ similarly, then the Schubert problem becomes
 \[
   % The referee would like to explain: \Omega_{\sII}\Lambda^{12}
   % \comm{p7, l 29}{This notation should be explained}
   %
   % We now had explained the notation when we defined Schubert varieties.
   %
   (\Omega_{\sII}\Lambda^{12}\: \cup\: \Omega_{\sT}L^{12})\ \cap\ 
    (\Omega_{\sII}\Lambda^{34}\: \cup\: \Omega_{\sT}L^{34})\ \cap\ 
    \Omega_{\sI}F^5_3\ \cap\ \Omega_{\sI}F^6_3\,.
 \]
 This gives four subproblems, which are the intersection of $ \Omega_{\sI}F^5_3 \cap \Omega_{\sI}F^6_3$ with one of
 \[
    \Omega_{\sII}\Lambda^{12}\cap\Omega_{\sII}\Lambda^{34}\,,\ 
    \Omega_{\sII}\Lambda^{12}\cap\Omega_{\sT}L^{34}\,,\ 
    \Omega_{\sT}L^{12}\cap\Omega_{\sII}\Lambda^{34}\,,\ \mbox{ or }\ 
    \Omega_{\sT}L^{12}\cap\Omega_{\sT}L^{34}\,.
  \]
  %
  %  The referee noted a typo above.  I fixed it
  %
  The first three each have a unique solution---for example the first intersection is
  \[
    \Omega_{\sII}\Lambda^{12}\cap\Omega_{\sII}\Lambda^{34}\cap\Omega_{\sI}F^5_3 \cap \Omega_{\sI}F^6_3\ =\ \{H\}\,,
   \]
  where $H$ is the span of the two one-dimensional linear subspaces 
  \[
    F^5_3\cap\Lambda^{12}\cap\Lambda^{34}
    \qquad\mbox{and}\qquad
    F^6_3\cap\Lambda^{12}\cap\Lambda^{34}\,.
  \]
  The last subproblem gives an instance of $\T^2\cdot\I^2=2$ in $\Gr(2,5)$.
  As in that case, letting one of $F^5_3$ or $F^6_3$ move in a pencil completes the proof in this case.
 The special positions of the flags used here give the generically transverse intersections that are claimed, as shown
 in~\cite{So97} (and also used by Schubert in~\cite{Sch1886c}.)
\end{proof}
%%%%%%%%%%%%%%%%%%%%%%%%%%%%%%%%%%%%%%%%%%%%%%%%%%%%%%%%%%%%%%%%%%%%%%%%%%%%%%%%%

%%%%%%%%%%%%%%%%%%%%%%%%%%%%%%%%%%%%%%%%%%%%%%%%%%%%%%%%%%%%%%%%%%%%%%%%%%%%%%%%%
\begin{remark}\label{R:Galois_Comment}
  For each of the five nontrivial Schubert problems $\bnu$ in $\Gr(2,5)$, the proof of Proposition~\ref{P:Two} exhibits
  a choice $\calFdot$ of flags such that $\Omega_\bnu\calFdot$ consists of $d(\bnu)$ points.
  Furthermore, the proof showed if we let one of the subspace $F^i_3$ corresponding to a condition $\I$ move in a
  general pencil while fixing the other flags, then that pencil of instances contains an instance $\calFdot'$ with a unique
  double point.
  Consequently, monodromy in that pencil around  $\calFdot'$ is a simple transposition in the Schubert Galois group
  $\Gal_\bnu$.
  This uses the observation that the linear algebra constructions of intersection with a subspace and image under a linear
  map preserves linear algebraic objects, such as pencils of subspaces.

  A consequence is that the same holds for any choice of general flags $\calGdot$ with $\Omega_\bnu\calGdot$
  consisting of $d(\bnu)$ points.
  That is, if a subspace $G^i_3$ corresponding to a condition $\I$ moves in a general pencil, then there will be an
  instance  $\calGdot'$ in that pencil with a unique double point and therefore monodromy in that pencil around
  $\calGdot'$ is a simple transposition.
  This may perhaps also be understood as the general singular configuration of flags $\calFdot$ gives one point of multiplicity two in
  $\Omega_{\bnu}\calFdot$ and $d(\bnu){-}2$ simple points.
  \hfill{$\diamond$}
\end{remark}
%%%%%%%%%%%%%%%%%%%%%%%%%%%%%%%%%%%%%%%%%%%%%%%%%%%%%%%%%%%%%%%%%%%%%%%%%%%%%%%%%

%%%%%%%%%%%%%%%%%%%%%%%%%%%%%%%%%%%%%%%%%%%%%%%%%%%%%%%%%%%%%%%%%%%%%%%%%%%%%%%%%
\subsection{Permutation groups}\label{SS:Permutation}
Fix a positive integer $d$.
A permutation group of degree $d$ is a subgroup of $S_d$, that is, it is a group $G$ together with a faithful action on
$\defcolor{[d]}:=\{1,\dotsc,d\}$.
Write the image of $a\in[d]$ under $g\in G$ as $g(a)$.
(The set $[d]$ may be replaced by any set of cardinality $d$.)
This permutation group is \demph{transitive} if for all $i,j\in[d]$, there is a $g\in G$ with $g(i)=j$.
More generally, for any $1\leq t\leq d$, a permutation group $G$ is \demph{$t$-transitive} if for any distinct 
$i_1,\dotsc,i_t\in [d]$ and distinct $j_1,\dotsc,j_t\in[d]$, there is a $g\in G$ with $g(i_m)=j_m$ for $m=1,\dotsc,t$.
The full symmetric group $S_d$ is $d$-transitive and its alternating subgroup $A_d$ is $(d{-}2)$-transitive.
These are the only highly transitive permutation groups. 
This is explained in~\cite[Section~4]{Cameron} and summarized in the following proposition, which follows from the
O'Nan-Scott Theorem~\cite{OS} and the classification of finite simple groups.\medskip

%%%%%%%%%%%%%%%%%%%%%%%%%%%%%%%%%%%%%%%%%%%%%%%%%%%%%%%%%%%%%%%%%%%%%%%%%%%%%%%%%
\noindent{\bf Theorem~4.11} of~\cite{Cameron}{\bf .}{\it \ 
  The only $6$-transitive groups are symmetric groups $S_n$ and alternating groups
  $A_{n+2}$ for $n\geq 6$.
 The only $4$-transitive groups are the appropriate symmetric and alternating groups, and the Mathieu groups
 $M_{11}$, $M_{12}$, $M_{23}$, and $M_{24}$.
 All $2$-transitive permutation groups are  known.}\medskip
%%%%%%%%%%%%%%%%%%%%%%%%%%%%%%%%%%%%%%%%%%%%%%%%%%%%%%%%%%%%%%%%%%%%%%%%%%%%%%%%%

Tables~7.3 and 7.4 in~\cite{Cameron} list the 2-transitive permutation groups.
Let $G$ be a transitive permutation group of degree $d$.
A \demph{block} is a subset $S$ of $[d]$ such that for every $g\in G$ either $g(S)=S$ or $g(S)\cap S=\emptyset$.
The orbits of a block generate a partition of $[d]$ into blocks.
The group $G$ is \demph{primitive} if its only blocks are $[d]$ or singletons; otherwise it is \demph{imprimitive}.
Any 2-transitive permutation group is primitive, and primitive permutation groups that are not symmetric or
alternating are rare---the set of degrees $d$ of such nontrivial primitive permutation
groups has density zero in the natural numbers~\cite[Section~4.9]{Cameron}. 

Let $[d]\times[f]$ be the set of ordered pairs $\{(a,b)\mid a\in[d]\,,\ b\in[f]\}$ and 
$\pi\colon[d]\times[f]\to [f]$ the projection map.
The \demph{wreath product  $S_d\wr S_f$}  is the symmetry group of the fibration $\pi\colon[d]\times[f]\to [f]$.
That is, it is the largest permutation group acting on the set $[d]\times[f]$ that also preserves the partition given
by the fibers of $\pi$; thus, it also has an induced action on $[f]$.  
The action on $[d]\times[f]$ is imprimitive when both $d$ and $f$ are at least 2.
As an abstract group, it is the  semidirect product $(S_d)^f\rtimes S_f$.
Its elements are ordered pairs $((g_1,\dotsc,g_f),h)$ where $g_1,\dotsc,g_f\in S_d$ and $h\in S_f$, with product defined by 
\[
    ((g_1,\dotsc,g_f),h)((\gamma_1,\dotsc,\gamma_f),k)\ :=\ 
    ((g_1\gamma_{h^{-1}(1)},\dotsc,g_f\gamma_{h^{-1}(f)}),hk)\,.
\]
Its action on $[d]\times[f]$ of ordered pairs is as follows,
\[
    ((g_1,\dotsc,g_f),h)(a,b)\ :=\  ( g_{h(b)}(a), h(b) )\,.
\]
For permutation groups $G$ and $H$ of degrees $d$ and $f$, respectively, their
wreath product $G\wr H$ is the obvious subgroup of $S_d\wr S_f$.
Every imprimitive permutation group is a subgroup of a wreath product of symmetric
groups~\cite[Theorem~1.8]{Cameron}. 

The symmetric group $S_d$ is the only $2$-transitive permutation group of degree $d$ that contains a $2$-cycle.
Jordan gave a useful generalization.

%%%%%%%%%%%%%%%%%%%%%%%%%%%%%%%%%%%%%%%%%%%%%%%%%%%%%%%%%%%%%%%%%%%%%%%%%%%%%%%%%
\begin{proposition}[Jordan~\cite{J1870}]
 \label{P:Jordan}
 If $G\subset S_d$ is primitive and contains a $p$-cycle for some prime number $p<d{-}2$, then $G$
 contains the alternating group $A_d$.
\end{proposition}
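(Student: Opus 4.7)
The strategy is to use the primitivity of $G$ to produce inside $G$ an element whose support has size at most $3$, and then invoke the classical fact that a primitive subgroup of $S_d$ (with $d \geq 5$) containing a transposition or a $3$-cycle must contain $A_d$. The bound $d \geq p + 3 \geq 5$ will put us in this range.

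If $p = 2$ the $p$-cycle $\sigma$ is itself a transposition, and a primitive permutation group containing a transposition equals the full symmetric group, finishing this case. Henceforth assume $p \geq 3$ is odd, and let $\Delta \subset [d]$ be the $p$-element support of $\sigma$. Since $1 < |\Delta| < d$ and $G$ is primitive, $\Delta$ is not a block, so there exists $g \in G$ with $\emptyset \subsetneq g\Delta \cap \Delta \subsetneq \Delta$. Setting $\tau := g\sigma g^{-1}$, which is a $p$-cycle supported on $g\Delta$, I would examine the commutator $[\sigma,\tau] \in G$. Its support lies in $\Delta \cup g\Delta$; a direct computation on each of the three regions $\Delta \setminus g\Delta$, $g\Delta \setminus \Delta$, and $\Delta \cap g\Delta$ shows that $[\sigma,\tau]$ is non-identity and fixes every point of $\Delta \cup g\Delta$ that is not ``close'' (in the cycle order of $\sigma$ or $\tau$) to the intersection $\Delta \cap g\Delta$. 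In particular, its support has size strictly less than $p$.

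Iterating this reduction---at each stage using primitivity to find a conjugate of the current element whose support properly meets but does not coincide with it---yields a non-identity $\pi \in G$ of minimum support size $s$. The primality and oddness of $p$, together with $d \geq p + 3$, force $s \in \{2, 3\}$. If $s = 2$, then $\pi$ is a transposition and $G = S_d \supseteq A_d$. If $s = 3$, then $\pi$ is a $3$-cycle; its $G$-normal closure $N$ is a non-trivial normal subgroup of the primitive group $G$, hence transitive, and since $N$ is generated by $3$-cycles, the classical Jordan--Wielandt argument gives $N = A_d$, so $G \supseteq A_d$.

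The main obstacle is the support-reduction step: verifying that $[\sigma,\tau]$ is non-identity and has strictly smaller support at each iteration requires careful bookkeeping with case splits based on the size of $|\Delta \cap g\Delta|$ and on how the cycles of $\sigma$ and $\tau$ interleave. The primality of $p$ is essential: for composite orders the commutator can become trivial or preserve the support size in degenerate configurations, so the prime hypothesis on $p$ is used to guarantee that the reduction genuinely terminates at support size $3$ rather than getting stuck at a larger value.
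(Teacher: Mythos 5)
The paper does not actually prove this proposition; it is quoted as a classical theorem from Jordan's 1870 treatise and used as a black box (notably in the proof of Corollary~\ref{C:IsSn}). So there is no in-paper argument to compare against, and the evaluation rests on whether your sketch is sound on its own.

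There is a genuine gap, and it is in exactly the place you flagged as "the main obstacle." The claim that $[\sigma,\tau]$ always has support strictly smaller than $p$ is false. Take $p=3$, $\sigma=(1\,2\,3)$, $\tau=(2\,3\,4)$ (overlap of size $2$); then $[\sigma,\tau]=(1\,4)(2\,3)$, which has support $4>3$. Take $\sigma=(1\,2\,3)$, $\tau=(3\,4\,5)$ (overlap of size $1$); then $[\sigma,\tau]=(1\,4\,3)$, which has support exactly $3$, not smaller. So even in the base case the reduction can stall or go backwards, and after one step your element is no longer a cycle of prime length, so the structure you were exploiting is gone. More fundamentally, the sentence "The primality and oddness of $p$, together with $d\geq p+3$, force $s\in\{2,3\}$" is asserting precisely the nontrivial content of Jordan's theorem: that a primitive group not containing $A_d$ cannot have a non-identity element of small support unless $d$ is small. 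Nothing in the sketch derives this; it is assumed. Your endgame (a primitive group containing a transposition is $S_d$, and one containing a $3$-cycle contains $A_d$ for $d\geq 5$) is correct and standard, but the passage from a $p$-cycle down to support $\leq 3$ is where all the work lives, and the commutator heuristic as stated does not deliver it.

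If you want to salvage the commute-with-a-conjugate idea, the clean route is not the commutator but a product. If you could produce a conjugate $\tau=g\sigma g^{-1}$ whose support overlaps $\Delta$ in exactly $p-1$ points, then $\sigma\tau^{-1}$ is a genuine $3$-cycle, and you finish immediately. The real work in Jordan's theorem is showing primitivity is strong enough to manufacture such a heavily overlapping conjugate. That is usually done via Jordan's theorem on Jordan sets (a primitive group whose pointwise stabilizer of a coset acts primitively on the complement is multiply transitive), applied to the support of the $p$-cycle; the hypothesis $p<d-2$ enters to guarantee enough transitivity to arrange the overlap. See Wielandt, \emph{Finite Permutation Groups}, Theorem~13.9, or Dixon and Mortimer, \emph{Permutation Groups}, Theorem~3.3E, for a complete argument; the primality of $p$ is used there to ensure $\langle\sigma\rangle$ acts primitively on its support, which is what the Jordan-set machinery requires.
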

%%%%%%%%%%%%%%%%%%%%%%%%%%%%%%%%%%%%%%%%%%%%%%%%%%%%%%%%%%%%%%%%%%%%%%%%%%%%%%%%%

%%%%%%%%%%%%%%%%%%%%%%%%%%%%%%%%%%%%%%%%%%%%%%%%%%%%%%%%%%%%%%%%%%%%%%%%%%%%%%%%%
\section{Lower bounds for Schubert Galois groups in $\Gr(4,9)$}\label{S:computations}

We use two methods to compute lower bounds for all Schubert Galois groups in $\Gr(4,9)$.
The first is a recursive criterion due to Vakil~\cite{Va06b} based on his geometric Littlewood-Richardson
rule~\cite{Va06a}.
When the criterion holds for a Schubert problem $\blambda$, the group $\Gal_\blambda$ is at least alternating.
The second method computes cycle types of elements in Schubert Galois group $\Gal_\blambda(\QQ)$ over
$\QQ$. 
%
%  The next part is cryptic.  Make it less so.
%
Assuming Conjecture~\ref{Conj:one}, that $\Gal_\blambda=\Gal_\blambda(\QQ)$, then in every Schubert problem $\blambda$ in
$\Gr(4,9)$ that can be computed, the lower bound from the second method shows that $\Gal_\blambda$ is as large as possible.
  It is the symmetric group $S_{d(\blambda)}$ when $\blambda$ is at least alternating, and in the other cases, it is a
  wreath product of symmetric groups which is the upper bound as determined in Section~\ref{S:geometry}.

%%%%%%%%%%%%%%%%%%%%%%%%%%%%%%%%%%%%%%%%%%%%%%%%%%%%%%%%%%%%%%%%%%%%%%%%%%%%%%%%%
\subsection{Vakil's Method}\label{SS:Vakil}
This exploits the classical method of special position in enumerative geometry to obtain information about Galois 
groups. 
Coupled with Vakil's geometric Littlewood-Richardson rule, it provides a remarkably effective method to show that nearly
all Schubert Galois groups in $\Gr(4,9)$ are at least alternating.
We describe Vakil's method, sketch the algorithm, and discuss the result of using two independent implementations to test
all essential Schubert problems in $\Gr(4,9)$. 

Suppose that $\pi\colon X\to Y$ is a branched cover of degree $d$ with regular locus $U\subset Y$.
We follow Vakil~\cite[Section~3.4]{Va06b}, who described how the monodromy action over an irreducible subvariety
$Z\subset Y$ that meets $U$ affects the Galois group $\Gal_{\pi}$.
Suppose that $Z\hookrightarrow Y$ is the closed embedding of a Cartier divisor that meets $U$, where $Y$ is smooth in
codimension one along $Z$.
%
% The referee does not like this.  Maybe we should make it clear that we are quoting Vakil, and following his notation.
%   This is Section 3.4 of Vakil
%
Let \defcolor{$W$} be the closure in $X$ of $\pi^{-1}(Z\cap U)$, and consider the  diagram
 \begin{equation}\label{Eq:fiber_diagram}
  \raisebox{-20pt}{
  \begin{picture}(60,45)
   \put(5,35){$W$} \put(18,35){$\lhra$} \put(48,35){$X$}
   \put(0,20){$p$}\put(10,32){\vector(0,-1){20}}
      \put(52,32){\vector(0,-1){20}}\put(55,20){$\pi$}
   \put(5, 0){$Z$} \put(18, 0){$\lhra$} \put(48, 0){$Y$}
  \end{picture}
  }
 \end{equation}
where $p\colon W\to Z$ has degree $d$.
When $W$ is irreducible or has two components Vakil shows that the following holds.
\begin{enumerate}
 \item[(a)] If $W$ is irreducible, then the monodromy group $\Gal_{p}$ is a subgroup of $\Gal_{\pi}$.
 \item[(b)] If $W=W_1\cup W_2$ with each $p_i\colon W_i\to Z$ a branched cover of degree $d_i$, then the monodromy group
   for $p$ is a subgroup of both $\Gal_{\pi}$ and of the product $\Gal_{p_1}\times \Gal_{p_2}$, and it
   maps surjectively onto each factor $\Gal_{p_i}$.
   %
   %  The monodromy subgroup is a subgroup of $(\Gal_{p_1}\times \Gal_{p_2}) \cap \Gal_{\pi}$
   %
\end{enumerate}
In the above situation, Vakil gave criteria for deducing that $\Gal_{\pi}$ is at least
alternating, based on purely group-theoretic arguments including Goursat's Lemma.\medskip

%%%%%%%%%%%%%%%%%%%%%%%%%%%%%%%%%%%%%%%%%%%%%%%%%%%%%%%%%%%%%%%%%%%%%%%%%%%%%%%%%
%
%  Give a precise citation for these results
%
\noindent{\bf Vakil's Criteria.} (Theorem 3.2 and Remark 3.4 in~\cite{Va06b})
{\it
  Suppose that we have a diagram as in~\eqref{Eq:fiber_diagram}.
  The Galois group  $\Gal_{\pi}$ is at least alternating if one of the following holds.
\begin{enumerate}
\item[(i)] We are in Case (a) and $\Gal_{p}$ is at least alternating.
  %
  %  This ^^^^ is misstated
  %
\item[(ii)] We are in Case (b),  $\Gal_{p_1}$ and $\Gal_{p_2}$ are at least alternating,
         and either $d_1\neq d_2$ or $d_1 = d_2 = 1$.
\end{enumerate}
}\medskip
%%%%%%%%%%%%%%%%%%%%%%%%%%%%%%%%%%%%%%%%%%%%%%%%%%%%%%%%%%%%%%%%%%%%%%%%%%%%%%%%%

%%%%%%%%%%%%%%%%%%%%%%%%%%%%%%%%%%%%%%%%%%%%%%%%%%%%%%%%%%%%%%%%%%%%%%%%%%%%%%%%%
%
%   Simply skip the description of this.  Say as little as possible/necessary
%
%
The geometric Littlewood-Richardson rule is presented in Vakil's original paper~\cite{Va06a} and also in some detail from a
different perspective in~\cite{LMSVV}.
Its implications for Schubert Galois groups are explained in Section~3 of~\cite{Va06b}. 
We refer the reader to those sources for details, stating only the consequences of Vakil's construction.

Given a Schubert problem $\blambda$, Vakil~\cite{Va06a} constructs a \demph{checkerboard tournament
    $\calT_{\blambda}$}, which is a rooted tree encoding the structure of the branched cover $\calX_\blambda\to(\Fln)^s$ as some 
  subspaces in the flags degenerate in a particular way.
  The nodes of $\calT_\blambda$ are certain \demph{checkerboards} $\cbd$ and each encodes a branched cover
  $X_{\cbd}\to Y_{\cbd}$ with $Y_{\cbd}$ smooth resulting from a degeneration of $\calX_\blambda\to(\Fln)^s$.
  The children of a node represent further degenerations, obtained by restricting  $X_{\cbd}\to Y_{\cbd}$  to a Cartier
  divisor $Z$ of $Y_{\cbd}$ as in~\eqref{Eq:fiber_diagram} and taking the irreducible components of $W$.
  Each node $\cbd$ has either one or two children, denoted $\cbd'$ and $\cbd''$.

To state the main results of~\cite{Va06a,Va06b} which concern us, for a node $\cbd$ of $\calT_{\blambda}$, let
  \defcolor{$\delta(\cbd)$} be the number of leaves above $\cbd$.
  This satisfies the recursion that if $\cbd$ is a leaf, then $\delta(\cbd)=1$,
  if $\cbd$ has a single child $\cbd'$, then $\delta(\cbd)=\delta(\cbd')$, and that 
  if $\cbd$ has two children $\cbd'$ and $\cbd''$, then $\delta(\cbd)=\delta(\cbd')+\delta(\cbd'')$.

%%%%%%%%%%%%%%%%%%%%%%%%%%%%%%%%%%%%%%%%%%%%%%%%%%%%%%%%%%%%%%%%%%%%%%%%%%%%%%%%%
\begin{proposition}\label{P:Vakil}
 Let $\blambda$ be a Schubert problem in $\Gr(k,n)$ and construct $\calT_{\blambda}$ as above.
 \begin{enumerate}
  \item If $\cbd$ is the root of $\calT_{\blambda}$, then $\delta(\cbd)=d(\blambda)$.
   %
   %  Every tree has a unique root
   %
  \item If for every node $\cbd$ of $\calT_{\blambda}$ with two children $\cbd'$ and $\cbd''$, either
    $\delta(\cbd')\neq\delta(\cbd'')$ or
   %
   %  This condition is not symmetric in the two children.  Fix it.
   %
   $\min\{\delta(\cbd'),\delta(\cbd'')\}=1$, then the Schubert Galois group $\Gal_{\blambda}$ is at least alternating.
 \end{enumerate}
\end{proposition}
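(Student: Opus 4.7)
The plan is to prove both claims by induction on the structure of the tree $\calT_{\blambda}$, working upward from the leaves to the root. The key observation is that the checkerboard tournament is built precisely so that each internal node sits inside a fiber diagram of the form~\eqref{Eq:fiber_diagram}, with $Z \hookrightarrow Y_{\cbd}$ a Cartier divisor and $W \subset X_{\cbd}$ either irreducible (Case (a)) or a union of two components (Case (b)). Once this structural fact is taken for granted, both parts of the proposition reduce to formal induction.

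For part (1), I would show that at every node $\cbd$ the integer $\delta(\cbd)$ equals the degree of the associated branched cover $X_{\cbd} \to Y_{\cbd}$. The base case is a leaf, where by construction the degenerated fiber consists of a single reduced point, so the degree is $1 = \delta(\cbd)$. At an internal node $\cbd$ with a single child $\cbd'$, we are in Case (a): the restricted cover $W \to Z$ is irreducible and of the same degree as the parent, so $\delta(\cbd) = \delta(\cbd')$. At a node with two children $\cbd'$ and $\cbd''$, Case (b) applies, $W = W_1 \cup W_2$ with $\deg(p_i) = \delta(\cbd_i)$, and conservation of degree under specialization to the divisor $Z$ gives $\delta(\cbd) = \delta(\cbd') + \delta(\cbd'')$. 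These three cases match the recursion in the definition of $\delta$, so at the root we obtain $\delta(\textrm{root}) = d(\blambda)$.

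For part (2), I would inductively prove that the Galois group $\Gal_{\pi_{\cbd}}$ at every node is at least alternating. The base case is a leaf, where the group is trivial and vacuously at least alternating. At an internal node with a single child, Vakil's Criterion (i) applies directly: the child's Galois group embeds into the parent's, and being at least alternating is therefore inherited upward. At a node with two children of degrees $d_1$ and $d_2$, the inductive hypothesis says both child Galois groups are at least alternating, and the hypothesis of the proposition says that either $d_1 \neq d_2$ or $d_1 = d_2 = 1$; together these are exactly the conditions for Vakil's Criterion (ii), which yields that the parent's Galois group is at least alternating. Applying this at the root gives the conclusion for $\Gal_{\blambda}$.

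The only nontrivial input is the construction and basic properties of the checkerboard tournament $\calT_{\blambda}$: that each internal node actually realizes a specialization falling into Case (a) or Case (b), with the components of $W$ in Case (b) corresponding precisely to the two children in the tree. This is the content of the geometric Littlewood-Richardson rule in~\cite{Va06a} together with its Galois-theoretic refinement in~\cite{Va06b}, which I would invoke as a black box. Given that, the main obstacle is purely bookkeeping, namely checking at each splitting node that the children's degrees match the degrees of the irreducible components of $W$ in the ambient specialization; this is an immediate consequence of how the tree is constructed, but is the step where one must be most careful in writing out the induction.
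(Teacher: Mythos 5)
Your proposal is correct and takes the same route the paper intends: the paper presents Proposition~\ref{P:Vakil} as a direct packaging of the structural results from~\cite{Va06a,Va06b}, without writing out a proof, and your bottom-up induction over the checkerboard tournament (degree counting for part~(1), Vakil's Criteria~(i)/(ii) at single- and double-child nodes for part~(2)) is exactly the reasoning that justifies it. The one implicit step you correctly flag as a black box---that each internal node of $\calT_{\blambda}$ realizes a fiber diagram~\eqref{Eq:fiber_diagram} falling into Case~(a) or~(b) with the children corresponding to the components of $W$---is precisely what the cited geometric Littlewood-Richardson rule supplies, and is the same ground the paper treats as given.
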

%%%%%%%%%%%%%%%%%%%%%%%%%%%%%%%%%%%%%%%%%%%%%%%%%%%%%%%%%%%%%%%%%%%%%%%%%%%%%%%%%

%\input{Vakil_Proof.tex}
%%%%%%%%%%%%%%%%%%%%%%%%%%%%%%%%%%%%%%%%%%%%%%%%%%%%%%%%%%%%%%%%%%%%%%%%%%%%%%%%%%%%%%%%%%%%%%%%%%%%
%%%%%%%%%%%%%%%%%%%%%%%%%%%%%%%%%%%%%%%%%%%%%%%%%%%%%%%%%%%%%%%%%%%%%%%%%%%%%%%%%
\noindent{\it Sketch of proof of Proposition~\ref{P:Vakil}.}
 The checkerboard game $\calT_{\lambda,\mu}$ 
 encodes a sequence of `bend-and-sometimes-break' flat degenerations of the intersection
 $\Omega_\lambda\Fdot\cap \Omega_\mu\Mdot$ of Schubert varieties into a union of Schubert varieties $\Omega_\nu\Fdot$ for
 $\nu$ a leaf of $\calT_{\lambda,\mu}$. 
 These follow a sequence of $\binom{n}{2}$ specializations of the pair $(\Fdot,\Mdot)$ of flags, starting at the root
 with the flags in linear general position and ending at the leaves with the flags coinciding.
 Let \defcolor{$Y_r$} be the set of pairs of flags in the $r$th special position.
 It is an orbit of $GL(n,\CC)$ on $\Fln\times\Fln$.
 Furthermore, $Y_{r+1}$ is a subset of the closure $\overline{Y_r}$ of $Y_r$ with 
 $\overline{Y_{r+1}}\hookrightarrow\overline{Y_r}$ the inclusion of a Cartier divisor and 
 $\overline{Y_r}$ is smooth in codimension one along $Y_{r+1}$.

 The checkerboard $\cbd$ labeling a node in $\calT_{\lambda,\mu}$ at height $r$ encodes a position of a $k$-plane with
 respect to a pair of flags $(\Fdot,\Mdot)\in Y_r$.
 For $(F,M)\in Y_r$, the set $X_{\cbd}(\Fdot,\Mdot)$ of all such $k$-planes
 forms a \demph{checkerboard variety}, which is irreducible.
 The family $\calX_{\cbd}\to Y_r$ whose fiber over $(\Fdot,\Mdot)\in Y_r$ is the checkerboard variety 
 $X_{\cbd}(\Fdot,\Mdot)$ has the following property.
 Let  $\overline{\calX_{\cbd}}\to \overline{Y_r}$ be its closure in $\Gr(k,n)\times\Fln\times\Fln$ and 
 $W\to Y_{r+1}$ the restriction of this closure to $Y_{r+1}$ as with~\eqref{Eq:fiber_diagram}.
 We have that $\calX_{\cbd}\cup W$ is flat over $Y_r\cup Y_{r+1}$, and that 
 $W$ has either one or two components, depending upon whether or not $\cbd$ has one or two children in
 $\calT_{\lambda,\mu}$, and these components are families $\calX_{\cbd'}$, where $\cbd'$ is a child of $\cbd$.

 This collection of families of degenerations of $\Omega_\lambda\Fdot\cap \Omega_\mu\Mdot$ is the 
 \demph{geometric Littlewood-Richardson rule}.
 This is because if \defcolor{$c^\nu_{\lambda,\mu}$} is the number of leaves of $\calT_{\lambda,\mu}$ labeled $\nu$, then
 the properties of these families show that
 \begin{equation}\label{Eq:LRR}
   [\Omega_\lambda\Fdot\cap \Omega_\mu\Mdot]\ =\ \sum_{\nu} c^\nu_{\lambda,\mu} [\Omega_\nu\Fdot]\,,
 \end{equation}
 where, for $V\subset \Gr(k,n)$, \defcolor{$[V]$} is the cohomology class Poincar\'e dual to the fundamental cycle of $V$
 in the homology of $\Gr(k,n)$.
 As classes of Schubert varieties form a basis for cohomology and the class of an intersection of Schubert varieties in
 general position is the product of their classes, the numbers $c^\nu_{\lambda,\mu}$ are the Littlewood-Richardson numbers.

 Given a Schubert problem $\blambda=(\lambda^1,\dotsc,\lambda^s)$, we may splice the families in the geometric
 Littlewood-Richardson rule for $\calT_{\lambda^{s-1},\lambda^s}$ into the total family $\calX_{\blambda}\to(\Fln)^s$ for
 $\blambda$~\eqref{Eq:TotalSpace} as follows. 
 Given a node $\cbd$ at height $r$ in $\calT_{\lambda^{s-1},\lambda^s}$, we have a family 
 $\calX_{\cbd,\blambda}\to (\Fln)^{s-2}\times Y_r$ whose fiber over a point
 $(\Fdot^1,\dotsc,\Fdot^{s-2},\Fdot,\Mdot,)\in (\Fln)^{s-2}\times Y_r$ is
\[
    \Omega_{\lambda^1}\Fdot^1\,\bigcap\, \dotsb\,\bigcap\, \Omega_{\lambda^{s-2}}\Fdot^{s-2}
   \ \bigcap\  X_{\cbd}(\Fdot,\Mdot)\,.
\]
 A leaf $\mu$ of $\calT_{\lambda^{s-1},\lambda^s}$ corresponds to the total family for the Schubert problem
 $(\lambda^1,\dotsc,\lambda^{s-2},\mu)$ over $(\Fln)^{s-1}$.
 As before, we may splice the tree $\calT_{\lambda^{s-2},\mu}$ of deformations into these families at the leaves of
 $\calT_{\lambda^{s-1},\lambda^s}$. 
 Continuing in this fashion creates families at each node of the checkerboard tournament $\calT_{\blambda}$.

 For each node $\cbd$ of $\calT_{\blambda}$ that is at the $r$th level in a checkerboard game
 $\calT_{\lambda^{s-t},\nu}$, which is itself at the $t$-th stage in the construction of $\calT_{\blambda}$, we have a
 branched cover $\calX_{\cbd,\blambda}\to (\Fln)^{s-t-1}\times Y_r$.
 Restricting its closure to $Z=(\Fln)^{s-t-1}\times Y_{r+1}$, we obtain a diagram as in~\eqref{Eq:fiber_diagram} where $W$
 has one or two components, corresponding to the one or two children of $\cbd$.
 At a leaf of $\calT_{\blambda}$, the corresponding family has fiber $\Omega_{[n{-}k]^k}\Fdot=\{F_k\}$ over $\Fdot$, 
 by~\eqref{Eq:SchubertCondition}. 
 In particular, the families at the leaves of $\calT_{\blambda}$ all have degree $1$.

 For each node $\cbd$ of $\calT_{\blambda}$ let \defcolor{$d(\cbd)$} be the degree of the family
 $\calX_{\cbd,\blambda}\to (\Fln)^{s-t-1}\times Y_r$.
 Then these degrees $d$ satisfy the same recursion over $\calT_{\blambda}$ and initial conditions as do the numbers
 $\delta$ of leaves above a given node, which proves that $\delta(\cbd)$ is the degree of the family
 $\calX_{\cbd,\blambda}\to (\Fln)^{s-t-1}\times Y_r$  at a node $\cbd$.
 Statement (1) is this observation for the root node of $\calT_{\blambda}$.
 Statements (2) and (3) follow by Vakil's criterion, applied recursively at each node $\cbd$ of
 $\calT_{\blambda}$ with two children.\hfill$\Box$\medskip
%%%%%%%%%%%%%%%%%%%%%%%%%%%%%%%%%%%%%%%%%%%%%%%%%%%%%%%%%%%%%%%%%%%%%%%%%%%%%%%%%@

%%%%%%%%%%%%%%%%%%%%%%%%%%%%%%%%%%%%%%%%%%%%%%%%%%%%%%%%%%%%%%%%%%%%%%%%%%%%%%%%%%%%%%%%%%%%%%%%%%%%

Vakil observed that Proposition~\ref{P:Vakil} part (2) yields an algorithm to show that a Schubert Galois
group is at least alternating.
He used it for the computations reported in~\cite{Va06b}.

%%%%%%%%%%%%%%%%%%%%%%%%%%%%%%%%%%%%%%%%%%%%%%%%%%%%%%%%%%%%%%%%%%%%%%%%%%%%%%%%%
\begin{algorithm}[Vakil's Algorithm]\label{A:Vakil}
 {\rm \ 

 {\bf Input:} A Schubert problem $\blambda$.

 {\bf Output:} Either ``$\Gal_\blambda$ is at least alternating'' or 
    ``Cannot determine if $\Gal_\blambda$ is at least alternating''.

 {\bf Do:} Construct $\calT_\blambda$ and recursively determine $\delta(\cbd)$ for all nodes $\cbd$ of $\calT_\blambda$.
  If at a node $\cbd$ there are two children $\cbd'$ and $\cbd''$, such that $\delta(\cbd')\neq 1$ and
  $\delta(\cbd')=\delta(\cbd'')$, then stop and output 
 ``Cannot determine if $\Gal_\blambda$ is at least alternating''.

  If we have either  $\min\{\delta(\cbd'),\delta(\cbd'')\}=1$ or
  $\delta(\cbd')\neq\delta(\cbd'')$ for all nodes $\cbd$ of $\calT_\blambda$ with
  two children, then stop and output ``$\Gal_\blambda$ is at least alternating''.
}
\end{algorithm}
%%%%%%%%%%%%%%%%%%%%%%%%%%%%%%%%%%%%%%%%%%%%%%%%%%%%%%%%%%%%%%%%%%%%%%%%%%%%%%%%%

Vakil implemented this in a Maple script which is available from his
website\footnote{\tt http://math.stanford.edu/\~{}vakil/programs/galois}.
A revised version is available from the website accompanying this
article\footnote{{\tt http://www.math.tamu.edu/\~{}sottile/research/stories/GIVIX}}.

%%%%%%%%%%%%%%%%%%%%%%%%%%%%%%%%%%%%%%%%%%%%%%%%%%%%%%%%%%%%%%%%%%%%%%%%%%%%%%%%%
\begin{remark}\label{R:assymmetry}
  The construction of
 the checkerboard tournament $\calT_\blambda$ depends upon the ordering of the partitions in $\blambda$.
 Consequently, the outcome of Vakil's Algorithm~\ref{A:Vakil} depends on the ordering of the partitions in
 $\blambda$. \hfill$\diamond$ 
\end{remark}
%%%%%%%%%%%%%%%%%%%%%%%%%%%%%%%%%%%%%%%%%%%%%%%%%%%%%%%%%%%%%%%%%%%%%%%%%%%%%%%%%

Table~\ref{Ta:Vakil} summarizes the result of running Vakil's Maple script
on all Schubert problems in some small Grassmannians.
For each, it records the total number of Schubert problems tested $(\#)$,
the number of problems $\blambda$ for which it could not decide
if $\Gal_\blambda$ was at least alternating (??), and the
time of computation in seconds or d:h:m:s format.
On each Grassmannian, all Schubert problems $\blambda$ were tested, including those with $d(\blambda)=0$ and with
$d(\blambda)=1$, as well as all non-essential Schubert problems, except for 
$\Gr(3,10)$, $\Gr(3,11)$, and $\Gr(4,9)$ for which many non-essential problems were not tested.
%%%%%%%%%%%%%%%%%%%%%%%%%%%%%%%%%%%%%%%%%%%%%%%%%%%%%%%%%%%%%%%%%%%%%%%%%%%%%%%%%
\begin{table}[htb]
\caption{Performance of Vakil's Maple script on different $\Gr(k,n)$}\label{Ta:Vakil}
%%%%%%%%%%%%%%%%%%%%%%%%%%%%%%%%%%%%%%%%%%%%%%%%%%%%%%%%%%%%%%%%%%%%%%%%%%%%%%%%%
% Problems in \Gr(2,n)
\begin{tabular}{|c||c|c|c|c|c|c|c|c|c|}
   \multicolumn{10}{c}{Schubert Problems in $\Gr(2,n)$}\\\hline
  $n$&4&5&6&7&8&9&10&11&12\\\hline\hline
  \#&1&4&10&23&47&90&164&288&488\\\hline
   ??&-&-&-&-&-&-&-&-&-\\\hline
  sec&.01&.03&.12&.36&1.28&3.6&11.4&40&199\\\hline
\end{tabular}\medskip

%%%%%%%%%%%%%%%%%%%%%%%%%%%%%%%%%%%%%%%%%%%%%%%%%%%%%%%%%%%%%%%%%%%%%%%%%%%%%%%%%
% Problems in \Gr(3,n)
\begin{tabular}{|c||c|c|c|c|c|c|c|}
   \multicolumn{8}{c}{Schubert Problems in $\Gr(3,n)$}\\\hline
  $n$&5&6&7&8&9&10&11\\\hline\hline
  \#&4&39&270&1337&5786&22011&77305\\\hline
  ??&-&2&3&7&14&24&48\\\hline
 d:h:m:s&0.29&5.8&30.2&3:47&47:5&7:28:18&11:14:55:36\\\hline
\end{tabular}\medskip

%%%%%%%%%%%%%%%%%%%%%%%%%%%%%%%%%%%%%%%%%%%%%%%%%%%%%%%%%%%%%%%%%%%%%%%%%%%%%%%%%
% Problems in \Gr(4,n)
\begin{tabular}{|c||c|c|c|c|}
   \multicolumn{5}{c}{Schubert Problems in $\Gr(4,n)$}\\\hline
  $n$&6&7&8&9\\\hline\hline
   \#&10&270&3802&38760\\\hline
   ??&-&3&33&233\\\hline
  d:h:m:s&9.1&3:35&4:23:27&9:14:38:5\\\hline
\end{tabular}

\end{table}
%%%%%%%%%%%%%%%%%%%%%%%%%%%%%%%%%%%%%%%%%%%%%%%%%%%%%%%%%%%%%%%%%%%%%%%%%%%%%%%%%

Students Christopher Brooks and Aaron Moore worked with us to implement Vakil's algorithm in Python.
We ran the resulting software on all Schubert problems in Table~\ref{Ta:Vakil}, with nearly the same result.
Our Python script was inconclusive for 81 of the 198,099 nontrivial essentially new Schubert problems in $\Gr(3,12)$.
There was a slightly different set of Schubert problems in $\Gr(4,8)$ and $\Gr(4,9)$ for which the two implementations were
unable to determine if they were at least alternating.
The reason for this was mentioned in Remark~\ref{R:assymmetry}---the two implementations construct the checkerboard
tournament $\calT_\blambda$ differently, and this matters for those Schubert problems.

%%%%%%%%%%%%%%%%%%%%%%%%%%%%%%%%%%%%%%%%%%%%%%%%%%%%%%%%%%%%%%%%%%%%%%%%%%%%%%%%%
\subsection{Frobenius Algorithm}\label{SS:Frobnenius}
The Frobenius algorithm gives a lower bound on a Galois group over $\QQ$ by computing cycle types of
Frobenius elements.
It exploits an asymmetry in Gr\"obner basis calculations---it is much faster to
first reduce an ideal modulo a prime $p$ and then compute an eliminant than to first compute the eliminant and then reduce
modulo $p$.

%367+81

Let \defcolor{$K$} be the splitting field of an irreducible univariate polynomial $f\in\ZZ[x]$ and
\defcolor{$\calO_K$} be the ring of elements in $K$ that are integral over $\ZZ$.
Dedekind showed that for every prime $p\in\ZZ$ not dividing the discriminant of $f$, there is a unique element 
$\defcolor{\sigma_p}\in\defcolor{\Gal(K/\QQ)}$ in the Galois group of $K$ over $\QQ$ such that for every prime
\defcolor{$\varpi$} of $\calO_K$ above $p$, i.e.~$\langle\varpi\rangle \cap \ZZ = \langle p \rangle$,
and every $z\in\calO_K$, we have 
$\sigma_p(z)\equiv z^p \mod\varpi$~\cite[Theorem~4.37]{Jacobson85}.
Thus $\sigma_p$ lifts the Frobenius map $z \mapsto z^p$ on $\calO_K/p\calO_K$ to
$\calO_K$ and thus to its field of fractions $K$.
It is not necessary that $f$ be monic, but in that case, we must replace $\ZZ$ by the ring obtained by inverting the
primes which divide the leading coefficient of $f$.

The cycle type of this \demph{Frobenius element $\sigma_p$} (as a permutation of the roots of $f$) is given by the degrees
of the irreducible factors of $\defcolor{f_p}:=f\mod p$, as the irreducible factors give primes $\varpi$ above $p$.
The condition that $p$ does not divide the discriminant of $f$ is equivalent to $f_p$ being squarefree.
This gives a method to compute cycle types of elements of $\Gal(K/\QQ)$.
For a prime $p$, factor the reduction $f_p$, and if no factor is repeated, record the degrees of the factors.
This is particularly effective due to the Chebotarev Density Theorem, which asserts that Frobenius elements are uniformly
distributed for sufficiently large primes $p$.

Let $\pi\colon X\to Y$  be a branched cover of degree $d$ defined over $\QQ$ with $Y$ a rational variety.
For any regular value $y\in U(\QQ)\subset Y(\QQ)$ of $\pi$, if \defcolor{$K_y$} is the field of definition of all points in
the fiber $\pi^{-1}(y)$, then $K_y/\QQ$ is Galois and $\Gal(K_y/\QQ)$ is a subgroup of $\Gal_{\pi}(\QQ)$.  
As explained in~\cite[Chapter~VII.2]{Lang}, there is a Zariski-dense open subset $U$ of $Y$ for which the
Galois groups are equal to $\Gal_\pi(\QQ)$ for all $y\in U(\QQ)$.
For $p$ a prime and $y\in U(\QQ)$, write $\defcolor{\sigma_p(y)}$ for the Frobenius element acting on the field extension
$\Gal(K_y/\QQ)$ for the fiber above $y$.
Ekedahl~\cite{Ekedahl} showed that for a sufficiently large prime $p$, the Frobenius
elements $\sigma_p(y)\in\Gal(K_y/\QQ)\subset\Gal_{\pi}(\QQ)$ are uniformly distributed in $\Gal_{\pi}(\QQ)$ for $y\in U(\QQ)$.
Thus we may study $\Gal_{\pi}(\QQ)$ by fixing a prime $p$ and computing cycle types of Frobenius elements in
$\Gal(K_y/\QQ)$ at points  $y\in U(\QQ)$.

By Jordan's Theorem (Proposition~\ref{P:Jordan}), knowing cycle types of Frobenius elements can be used to show a Galois
group is full symmetric.

%%%%%%%%%%%%%%%%%%%%%%%%%%%%%%%%%%%%%%%%%%%%%%%%%%%%%%%%%%%%%%%%%%%%%%%%%%%%%%%%%
\begin{corollary}\label{C:IsSn}
 Suppose that $G\subset S_d$ contains a $d$-cycle, a $(d{-}1)$-cycle, and an element $\sigma$ with a unique longest cycle
 of length a prime $p<d{-}2$.
 Then $G=S_d$.
\end{corollary}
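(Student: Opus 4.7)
The plan is to establish $G = S_d$ in four stages: transitivity, $2$-transitivity, primitivity plus a $p$-cycle to invoke Jordan's Proposition~\ref{P:Jordan}, and finally parity to rule out $G = A_d$.

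First, I would observe that the presence of a $d$-cycle shows $G$ acts transitively on $[d]$. Next, to upgrade this to $2$-transitivity, I would use the $(d{-}1)$-cycle: it fixes exactly one point $x \in [d]$ and acts as a single $(d{-}1)$-cycle on $[d]\setminus\{x\}$, so the stabilizer $G_x$ acts transitively on the complement. Combined with transitivity of $G$, this yields $2$-transitivity, and in particular $G$ is primitive.

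The key step is extracting an honest $p$-cycle from $\sigma$. Write the cycle type of $\sigma$ as one cycle of length $p$ (the unique longest cycle, with $p$ prime) together with cycles of lengths $\ell_1,\dotsc,\ell_k$, each strictly less than $p$. Set $N := \operatorname{lcm}(\ell_1,\dotsc,\ell_k)$; since every $\ell_i < p$ and $p$ is prime, $\gcd(N,p) = 1$. Then $\sigma^N$ kills every shorter cycle and restricts to a nontrivial power of the $p$-cycle, which, by coprimality of $N$ and $p$, is again a $p$-cycle. Hence $G$ contains a $p$-cycle with $p < d{-}2$ prime, and Proposition~\ref{P:Jordan} applies to the primitive group $G$ to give $A_d \subseteq G$.

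It remains to show $G \neq A_d$, which is a short parity argument and not really a serious obstacle. A $k$-cycle is an odd permutation precisely when $k$ is even, so the $d$-cycle in $G$ is odd when $d$ is even, and the $(d{-}1)$-cycle in $G$ is odd when $d$ is odd. In either parity of $d$, $G$ contains an odd permutation, so $G \not\subseteq A_d$, forcing $G = S_d$. The only genuinely substantive step is the construction of the $p$-cycle from $\sigma$; everything else is routine once $2$-transitivity and Jordan's theorem are in place.
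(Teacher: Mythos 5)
Your proof is correct and follows essentially the same route as the paper's: $2$-transitivity (hence primitivity) from the $d$- and $(d{-}1)$-cycles, an explicit $p$-cycle extracted as a power of $\sigma$, Jordan's theorem to get $A_d\subseteq G$, and a parity observation to rule out $G=A_d$. The only cosmetic difference is that the paper raises $\sigma$ to the power $(p-1)!$ (which kills all cycles of length $<p$ at once and is coprime to $p$), while you use the lcm of the shorter cycle lengths; both choices do the same job.
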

%%%%%%%%%%%%%%%%%%%%%%%%%%%%%%%%%%%%%%%%%%%%%%%%%%%%%%%%%%%%%%%%%%%%%%%%%%%%%%%%%

%%%%%%%%%%%%%%%%%%%%%%%%%%%%%%%%%%%%%%%%%%%%%%%%%%%%%%%%%%%%%%%%%%%%%%%%%%%%%%%%%
\begin{proof}
 As $G$ contains both a $d$- and a $(d{-}1)$-cycle, it is $2$-transitive (hence primitive), and it is not a subgroup of the 
 alternating group.
 As $\sigma^{(p{-}1)!}$ is a $p$-cycle (it is the inverse of the $p$-cycle in $\sigma$), Jordan's Theorem implies that
 $G=S_d$. 
\end{proof}
%%%%%%%%%%%%%%%%%%%%%%%%%%%%%%%%%%%%%%%%%%%%%%%%%%%%%%%%%%%%%%%%%%%%%%%%%%%%%%%%%

Symbolic computation, together with Frobenius elements, Ekedahl's Theorem, and
Corollary~\ref{C:IsSn} give an effective method to study Galois groups in enumerative geometry, presented in
Algorithm~\ref{A:Frobenius} below. 
Suppose that $f_1,\dotsc,f_N \in \CC[x_1,\dotsc,x_m]$ are polynomials that generate an ideal $I$ whose variety
$\defcolor{\calV(I)}\subset\CC^m$ is zero-dimensional and has degree $d$. 
The monic generator $f(x_1)$ of the ideal $I\cap\CC[x_1]$ is an \demph{eliminant} of $I$.
We use the following Shape Lemma (adapted from~\cite{BMMT}).

%%%%%%%%%%%%%%%%%%%%%%%%%%%%%%%%%%%%%%%%%%%%%%%%%%%%%%%%%%%%%%%%%%%%%%%%%%%%%%%%%
\begin{proposition}[Shape Lemma]
 Suppose that $f_1,\dotsc,f_N$ have integer coefficients and let $I$, $f$, and $d$ be as above.
 Then $f\in\QQ[x_1]$, and after clearing denominators, we may assume that $f\in\ZZ[x_1]$.
 If $f$ has degree $d$ and is square-free, then the splitting field of $f$ is the field generated by the coordinates of the 
 points in $\calV(I)\subset\CC^m$. 
\end{proposition}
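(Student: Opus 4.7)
The plan is to analyze everything through the finite $\QQ$-algebra $R := \QQ[x_1,\dotsc,x_m]/I_{\QQ}$, where $I_{\QQ}$ is the ideal generated in $\QQ[x_1,\dotsc,x_m]$ by the integer polynomials $f_1,\dotsc,f_N$. Because these generators are defined over $\QQ$, one has $I = I_{\QQ}\cdot\CC[x_1,\dotsc,x_m]$, and flatness of $\CC/\QQ$ gives $\dim_{\QQ}R = \dim_{\CC}(R\otimes_{\QQ}\CC) = d$.

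For the first assertion, I would use that $I_{\QQ}\cap\QQ[x_1]$ is a principal ideal in the PID $\QQ[x_1]$ with a unique monic generator $g$, and then invoke flatness of $\CC$ over $\QQ$ (applied to the inclusion $\QQ[x_1]\subset\QQ[x_1,\dotsc,x_m]$) to conclude
\[
  I\cap\CC[x_1] \;=\; (I_{\QQ}\cap\QQ[x_1])\otimes_{\QQ}\CC \;=\; g\cdot\CC[x_1]\,.
\]
Comparing monic generators forces $f = g\in\QQ[x_1]$; clearing denominators gives a representative in $\ZZ[x_1]$.

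Now assume $f$ is square-free of degree $d$, with distinct roots $\alpha_1,\dotsc,\alpha_d\in\overline{\QQ}$, and let $K=\QQ(\alpha_1,\dotsc,\alpha_d)$ be the splitting field. The map $\QQ[x_1]\to R$ sending $x_1\mapsto x_1$ has kernel $I_{\QQ}\cap\QQ[x_1]=(f)$, so it induces an injection $\QQ[x_1]/(f)\hookrightarrow R$. Both $\QQ$-algebras have dimension $d$, hence this map is an isomorphism. Consequently each $x_i$, viewed in $R$, is represented by some $g_i(x_1)\in\QQ[x_1]$ of degree less than $d$ (with $g_1=x_1$). Finally, the points of $\calV(I)$ correspond to the $\CC$-algebra homomorphisms $R\otimes_{\QQ}\CC\to\CC$, which under the isomorphism $R\cong\QQ[x_1]/(f)$ are exactly the $d$ evaluations $x_1\mapsto\alpha_i$. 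The $i$-th point is therefore
\[
  \bigl(\alpha_i,\,g_2(\alpha_i),\,\dotsc,\,g_m(\alpha_i)\bigr)\,,
\]
whose coordinates lie in $\QQ(\alpha_i)\subset K$. Conversely the first coordinates $\alpha_1,\dotsc,\alpha_d$ already generate $K$, so the field generated by all coordinates of all points of $\calV(I)$ equals $K$.

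The main obstacle is to pass from the global setting to the identification $R\cong\QQ[x_1]/(f)$: a priori $R$ may have a complicated structure over $\QQ$, but the square-freeness of $f$ together with the degree equality $\deg f = d = \dim_{\QQ}R$ forces $x_1$ to be a primitive element of $R$ as a $\QQ$-algebra, which makes the remainder of the argument routine bookkeeping with evaluation homomorphisms.
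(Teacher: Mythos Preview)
Your proof is correct and follows essentially the same approach as the paper: both identify that under the hypotheses each $x_i$ can be written as $g_i(x_1)$ modulo $I$ with $g_i\in\QQ[x_1]$, so that the points of $\calV(I)$ are $(\alpha_i,g_2(\alpha_i),\dotsc,g_m(\alpha_i))$ for $\alpha_i$ the roots of $f$, and the claim about the splitting field is immediate. The paper simply asserts this shape-basis description (citing \cite{BMMT}), whereas you supply a self-contained justification via the dimension equality $\dim_{\QQ}R=\deg f$ forcing $\QQ[x_1]/(f)\hookrightarrow R$ to be an isomorphism; this extra detail is a welcome elaboration rather than a different argument.
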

%%%%%%%%%%%%%%%%%%%%%%%%%%%%%%%%%%%%%%%%%%%%%%%%%%%%%%%%%%%%%%%%%%%%%%%%%%%%%%%%%

%%%%%%%%%%%%%%%%%%%%%%%%%%%%%%%%%%%%%%%%%%%%%%%%%%%%%%%%%%%%%%%%%%%%%%%%%%%%%%%%%
\begin{proof}
  Under these hypotheses, the ideal $I$ is generated by $f$ and by
  polynomials of the form $x_i-g_i$ with $g_i\in\QQ[x_1]$ and $\deg(g_i)<d$, for $i=2,\dotsc,m$.
  Thus we have the isomorphism of rings
 \[
   \QQ[x_1,\dotsc,x_m]/I\ =\ 
   \QQ[x_1,\dotsc,x_m]/\langle f, x_i-g_i\mid i=2,\dotsc,m\rangle\ \simeq\ 
   \QQ[x_1]/\langle f\rangle\ .
 \]
 This isomorphism is induced by the inclusion $\QQ[x_1]\subset\QQ[x_1,\dotsc,x_m]$, which corresponds geometrically to the
 projection to the first coordinate. 
 This induces a map $\calV(I)\to\calV(f)$ which is an isomorphism of schemes over $\QQ$,
 and has inverse $x_1\mapsto (x_1,g_2(x_1),\dotsc, g_m(x_1))$.
 Thus coordinates of the points in  $\calV(I)\subset\CC^m$ both lie in and generate the $\QQ$-algebra generated by the roots of
 $f$, which is the splitting field of $f$.
\end{proof}
%%%%%%%%%%%%%%%%%%%%%%%%%%%%%%%%%%%%%%%%%%%%%%%%%%%%%%%%%%%%%%%%%%%%%%%%%%%%%%%%%

%%%%%%%%%%%%%%%%%%%%%%%%%%%%%%%%%%%%%%%%%%%%%%%%%%%%%%%%%%%%%%%%%%%%%%%%%%%%%%%%%
\begin{remark}
 Given polynomials $f_1,\dotsc,f_N\in\ZZ[x_1,\dotsc,x_m]$, Gr\"obner basis software (e.g.~Singular~\cite{Singular} or
 Macaulay2~\cite{M2}) can compute the dimension and degree of the variety $\calV(I)\subset\CC^m$ of the ideal $I$ they
 generate, and compute eliminants. 
 These computations take place in the ring $\QQ[x_1,\dotsc,x_m]$.
 The software may also reduce the polynomials modulo a prime $p$, and perform the same computations in
 $\FF_p[x_1,\dotsc,x_m]$ (here $\defcolor{\FF_p}:=\ZZ/p\ZZ$).
 In prime characteristic, the computations will typically be many orders of magnitude faster.
 This is because the height (number of digits) of coefficients over $\QQ$ becomes enormous, while
 the coefficients in computations over $\FF_p$ have height bounded by $\log_2 p$.

 The dimension of $\calV(I)$ in characteristic $p$ is at least its dimension in characteristic zero.
 When both have dimension zero, the degree in characteristic $p$ is at most the degree in characteristic zero, and
 the eliminant in $\FF_p[x_1,\dotsc,x_m]$ is the reduction modulo $p$ of the eliminant in
 $\QQ[x_1,\dotsc,x_m]$. 
 Finally, the software packages compute the factorization of a univariate polynomial into irreducible factors in
 $\FF_p[x_1]$.\hfill$\diamond$ 
\end{remark}
%%%%%%%%%%%%%%%%%%%%%%%%%%%%%%%%%%%%%%%%%%%%%%%%%%%%%%%%%%%%%%%%%%%%%%%%%%%%%%%%%

%%%%%%%%%%%%%%%%%%%%%%%%%%%%%%%%%%%%%%%%%%%%%%%%%%%%%%%%%%%%%%%%%%%%%%%%%%%%%%%%%
\begin{algorithm}[Frobenius Algorithm]\label{A:Frobenius}
 {\rm \ 

 {\bf Input:} A branched cover $\pi\colon X\to Y$ of degree $d\geq 8$ with $Y=\A^n$ and $X\subset\A^m\times Y$
   given by a family $f_1,\dotsc,f_N \in \ZZ[x_1,\dotsc,x_m;y_1,\dotsc,y_n]$ of integer polynomials, a positive integer
   $M$ and a prime number $p$.

 {\bf Output:} Either ``$\Gal_{\pi}(\QQ)=S_d$'' or a list $L$ of cycle types of Frobenius elements.

 {\bf Initialize:} Set ${\tt counter}:=1$, $L:=[\ ]$ (the empty list), and 
      $c_d,c_{d-1},c_{\mbox{\rm\scriptsize prime}}:=0$.

 {\bf Do:} 
    Choose a random $y\in\QQ^n$ and for each $i=1,\dotsc,N$ let $g_i\in\ZZ[x_1,\dotsc,x_m]$ be the result of clearing
    denominators and removing common divisors from the coefficients of the polynomial $f_i(x;y)$.
    Let $I$ be the ideal in $\FF_p[x_1,\dotsc,x_m]$ generated by the reductions modulo $p$ of $g_1,\dotsc,g_N$.
    If $\dim(I)>0$, then return to the start of this loop, choosing a new random $y\in\QQ^n$.

    Otherwise, compute an eliminant $g(x_1)\in \FF_p[x_1]$ of $I$ and its irreducible factorization
\[
    g(x_1)\ =\ h_1(x_1) h_2(x_1) \dotsb h_s(x_1)\,.
\]
    If $\deg(g)<d$ or if two factors coincide, then return to the start of this loop.

    Otherwise, append to $L$ the cycle type given by the degrees of the factors.

    If $s=1$ so that $g$ is irreducible, set $c_d:=1$.

    If $s=2$ and one factor has degree $d{-}1$, set $c_{d-1}:=1$.

    If the maximal degree of a factor is a prime between $d{-}2$ and $d/2$, set  
     $c_{\mbox{\rm\scriptsize prime}}:=1$.

    If $c_d\cdot c_{d-1}\cdot c_{\mbox{\rm\scriptsize prime}}=1$, then stop and output ``$\Gal_{\pi}(\QQ)=S_d$''.

   Set ${\tt counter}:={\tt counter}+1$.  
   If ${\tt counter}\geq M$, then stop and output $L$, 
     otherwise  return to start of this loop.
}
\end{algorithm}
%%%%%%%%%%%%%%%%%%%%%%%%%%%%%%%%%%%%%%%%%%%%%%%%%%%%%%%%%%%%%%%%%%%%%%%%%%%%%%%%%

%%%%%%%%%%%%%%%%%%%%%%%%%%%%%%%%%%%%%%%%%%%%%%%%%%%%%%%%%%%%%%%%%%%%%%%%%%%%%%%%%
\begin{remark}
For $d\leq 7$, a more involved, but elementary, decision procedure is used to detect if $\Gal_{\pi}(\QQ)=S_d$ using
  cycles types of Frobenius elements.\hfill$\diamond$ 
\end{remark}
%%%%%%%%%%%%%%%%%%%%%%%%%%%%%%%%%%%%%%%%%%%%%%%%%%%%%%%%%%%%%%%%%%%%%%%%%%%%%%%%%

%%%%%%%%%%%%%%%%%%%%%%%%%%%%%%%%%%%%%%%%%%%%%%%%%%%%%%%%%%%%%%%%%%%%%%%%%%%%%%%%%
\begin{proof}[Proof of correctness]
 In each iteration of the {\bf Do} loop, a random element $y\in \QQ^n$ is chosen, and the algorithm tries to compute the
 reduction modulo $p$ of the eliminant of polynomials that define the fiber $\pi^{-1}(y)$, and then its
 irreducible factorization. 
 If there is no eliminant, if it does not satisfy the Shape Lemma, or if it is not square-free, then the algorithm returns
 to the start of the loop, choosing another element of $\QQ^n$.

 Otherwise, the eliminant $f\in\ZZ[x_1]$ of the ideal generated by $g_1,\dotsc,g_N$
  has discriminant that is not divisible by $p$ and satisfies $g \equiv f\mod p$, where $g\in\FF_p[x_1]$ is the eliminant
  of $I$.
 The algorithm saves the degrees of the factors of $g$, which
 is the cycle type of a Frobenius element of $\Gal_{\pi}(\QQ)$, by Ekedahl's Theorem.
 (If $f$ is reducible, then we apply Dedekind's theory to each irreducible factor.)
 The variables $c_d,c_{d-1}$, and $c_{\mbox{\rm\scriptsize prime}}$, which record that a $d$-cycle, a $(d{-}1)$-cycle,
 or a permutation with a unique longest cycle of length a prime at most $d{-}2$ have been observed, are
 updated. 
 Once each has been observed, the algorithm terminates and returns ``$\Gal_{\pi}(\QQ)=S_d$'', which holds, by
 Corollary~\ref{C:IsSn}. 

 If after $M$ iterations, the three cycles from the hypothesis of Corollary~\ref{C:IsSn} have not been observed,
 then the algorithm terminates and returns the list $L$ of observed cycle types of Frobenius elements.
 If there have not yet been $M$ iterations, then {\tt counter} is incremented and the algorithm returns to the start of the
 loop. 
 The algorithm must terminate, and in either case, it returns correct output.
\end{proof}
%%%%%%%%%%%%%%%%%%%%%%%%%%%%%%%%%%%%%%%%%%%%%%%%%%%%%%%%%%%%%%%%%%%%%%%%%%%%%%%%%

Given a Schubert problem $\blambda$, the total family $\calX_\blambda\to(\Fln)^s$ of the Schubert problem is a branched
cover defined over $\ZZ$ of degree $d(\blambda)$.
As sketched at the end of Subsection~\ref{SS:SchubertCalculus}, this may be formulated by polynomials with integer
coefficients, and so the Frobenius algorithm may be used to study the Schubert Galois group $\Gal_\blambda(\QQ)$.

We wrote software implementing the Frobenius algorithm to study Schubert
Galois groups in small Grassmannians, particularly $\Gr(4,9)$.
That software, along with a more complete description and its output is found on our
webpage\footnote{{\tt http://www.math.tamu.edu/\~{}sottile/research/stories/GIVIX}}.
We provide a summary.

%%%%%%%%%%%%%%%%%%%%%%%%%%%%%%%%%%%%%%%%%%%%%%%%%%%%%%%%%%%%%%%%%%%%%%%%%%%%%%%%%
\subsubsection{$\Gr(2,n)$}
 As all Schubert problems in $\Gr(2,n)$, for any $n$, are at least alternating~\cite{BdCS}, we did not test any Schubert
 problems in these Grassmannians.

%%%%%%%%%%%%%%%%%%%%%%%%%%%%%%%%%%%%%%%%%%%%%%%%%%%%%%%%%%%%%%%%%%%%%%%%%%%%%%%%%
\subsubsection{$\Gr(3,n)$}
 Vakil's algorithm was inconclusive for 98 Schubert problems in $\Gr(3,n)$ for $n\leq 11$, as indicated in
 Table~\ref{Ta:Vakil}.
 Our Python implementation found a further 81 inconclusive  Schubert problems in $\Gr(3,12)$.
 Our implementation of the Frobenius algorithm showed that each of these 179 Schubert problems has
 $\Gal_\blambda(\QQ)=S_{d(\blambda)}$.

%%%%%%%%%%%%%%%%%%%%%%%%%%%%%%%%%%%%%%%%%%%%%%%%%%%%%%%%%%%%%%%%%%%%%%%%%%%%%%%%%
\begin{theorem}
  Every Schubert problem in $\Gr(3,n)$ for $n\leq 12$ has at least alternating Galois group over $\QQ$.
\end{theorem}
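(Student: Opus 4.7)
The proof will be computer-assisted, combining the two lower-bound methods developed in Section~\ref{S:computations}. The plan is to enumerate every Schubert problem in $\Gr(3,n)$ for $n\leq 12$ and dispatch each one by either Vakil's Algorithm~\ref{A:Vakil} or the Frobenius Algorithm~\ref{A:Frobenius}.

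First, I would run Vakil's Algorithm on every Schubert problem $\blambda$ in $\Gr(3,n)$ for $n\leq 12$. Whenever the algorithm succeeds, Proposition~\ref{P:Vakil}(2) certifies that $\Gal_\blambda$ is at least alternating, and since $\Gal_\blambda$ is a normal subgroup of $\Gal_\blambda(\QQ)$, the group $\Gal_\blambda(\QQ)$ also contains $A_{d(\blambda)}$. According to Table~\ref{Ta:Vakil}, Vakil's Maple script handles all but $2+3+7+14+24+48=98$ problems for $n\leq 11$, and the Python implementation (which, as noted in Remark~\ref{R:assymmetry}, constructs the checkerboard tournament in a different order and so resolves some additional problems) handles all but $81$ of the essential Schubert problems in $\Gr(3,12)$. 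Because non-essential Schubert problems reduce to essential ones on smaller Grassmannians via the reductions of~\eqref{Eq:essential}, and those smaller Grassmannians have already been treated, only the $179$ essential problems left inconclusive by Vakil's method remain to be addressed.

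For each of these $179$ problems, I would apply the Frobenius Algorithm~\ref{A:Frobenius} to the total family $\calX_\blambda\to(\Fln)^s$ from~\eqref{Eq:TotalSpace}, which is defined over $\ZZ$ via the polynomial formulations referenced in Section~\ref{SS:SchubertCalculus}. Concretely, one picks a prime $p$ (avoiding primes of bad reduction for the chosen local coordinates), samples random rational flag configurations $\calFdot\in(\Fln)^s(\QQ)$, reduces modulo $p$, computes a Gr\"obner eliminant, and records the degree-sequence of its factorization. The output reported in the paper is that for every one of the $179$ problems, the algorithm eventually observes a $d(\blambda)$-cycle, a $(d(\blambda){-}1)$-cycle, and a permutation whose unique longest cycle has prime length in $(d(\blambda)/2,d(\blambda){-}2)$, whence Corollary~\ref{C:IsSn} yields $\Gal_\blambda(\QQ)=S_{d(\blambda)}$. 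Combining these cases completes the proof.

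The main obstacle I expect is the computational cost rather than any conceptual difficulty. The $81$ inconclusive problems in $\Gr(3,12)$ can have hundreds or thousands of solutions, so the eliminants in $\FF_p[x_1]$ are of high degree and their factorizations must be repeated sufficiently many times for the three ``witness'' cycle types demanded by Corollary~\ref{C:IsSn} to be observed; by Ekedahl's equidistribution, this is guaranteed to happen eventually, but the constants depend on the group and may be uncomfortably large. A secondary concern is arranging the polynomial encoding of $\calX_\blambda$ so that Gr\"obner computations modulo $p$ are feasible and the Shape Lemma applies for generic $y$; in practice this is handled by choosing a good affine chart on $(\Fln)^s$ and using the local coordinates described in~\cite{HaHS,HS_Sq,LMSVV,MSJ}. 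Once the computations complete, the theorem follows immediately.
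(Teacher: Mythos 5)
Your proposal is correct and matches the paper's argument essentially verbatim: the paper proves this theorem by reporting that Vakil's method (Maple for $n\leq 11$, Python for $n=12$) was inconclusive on exactly $98+81=179$ Schubert problems, and that the Frobenius algorithm then certified $\Gal_\blambda(\QQ)=S_{d(\blambda)}$ for each of those $179$. The only minor slip is that Remark~\ref{R:assymmetry} is invoked in the paper to explain discrepancies between the two implementations for $\Gr(4,8)$ and $\Gr(4,9)$, not to claim the Python version resolves extra cases in $\Gr(3,n)$ where the results were "nearly the same"; this does not affect the argument.
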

%%%%%%%%%%%%%%%%%%%%%%%%%%%%%%%%%%%%%%%%%%%%%%%%%%%%%%%%%%%%%%%%%%%%%%%%%%%%%%%%%

The results that Schubert Galois groups in $\Gr(3,n)$ are 2-transitive~\cite{SW_double},
and those for  $\Gr(2,n)$  are at least alternating~\cite{BdCS}, constitute
evidence for the conjecture that every Schubert Galois group in $\Gr(2,n)$ and in $\Gr(3,n)$ is a symmetric group.

%%%%%%%%%%%%%%%%%%%%%%%%%%%%%%%%%%%%%%%%%%%%%%%%%%%%%%%%%%%%%%%%%%%%%%%%%%%%%%%%%
\subsubsection{$\Gr(4,n)$}
Since $\Gr(4,6)\simeq \Gr(2,6)$ and $\Gr(4,7)\simeq \Gr(3,7)$, the next Grassmannian to study is $\Gr(4,8)$. 
Five of its 33 inconclusive Schubert problems from Table~\ref{Ta:Vakil} are non-essential, and the remaining
28 were studied in~\cite[Section~6]{SW_double}.
Two are at least alternating, and twelve more were found to be full symmetric by the
Frobenius algorithm. 
The remaining 14 are enriched, and their Galois groups over $\QQ$ were determined.

For $\Gr(4,9)$, our software tested each of the 233 inconclusive Schubert problems from Table~\ref{Ta:Vakil}.
Of these, 79 were shown to be full symmetric.
The remaining 154 appeared to be enriched.
For each of these 154, we tried to compute cycle types of 50,000 Frobenius elements, which showed that the
Galois group over $\QQ$ was likely equal to a particular permutation group, either $S_2\wr S_2$,  $S_2\wr S_3$,
$S_3\wr S_2$, $S_5\wr S_2$, or $S_4$ acting on the six equipartitions of the set $[4]$, as explained in the Introduction
  concerning Derksen's example.
Five of these, including the one with Galois group $S_4$, were not essential---they come from Schubert problems in
$\Gr(4,8)$.  
This is also described on our webpage\footnote{{\tt http://www.math.tamu.edu/\~{}sottile/research/stories/GIVIX}}.
We tabulate how many enriched Schubert problems were found for each group.
\[
\begin{tabular}{|c|c|c|c|c|}\hline
  $S_2\wr S_2$ &$S_2\wr S_3$& $S_3\wr S_2$& $S_5\wr S_2$& $S_4$\\\hline
  99 & 20  &25&9&1\\\hline
\end{tabular}
\]
We also used the Frobenius algorithm to test  26,051 Schubert problems $\blambda$ in $\Gr(4,9)$ with
$d(\blambda)$ at most 300; for all except the 154 enriched ones,
we found that $\Gal_{\blambda}(\QQ)=S_{d(\blambda)}$.
%
%   This is not documented anywhere, not even on the web page for the paper.
%

For two Schubert problems, 
$\sTh\cdot\raisebox{-2pt}{\sTII}^3\cdot\sFI=6$ and
$\sI\cdot\raisebox{-2pt}{\sTII}\cdot\sTT\cdot\sFI\cdot\sThTh=6$,
we computed nearly two million Frobenius elements with $p=10007$.
We display the results in Table~\ref{Ta:two}.
%%%%%%%%%%%%%%%%%%%%%%%%%%%%%%%%%%%%%%%%%%%%%%%%%%%%%%%%%%%%%%%%%%%%%%%%%%%%%%%%%
\begin{table}[htb]
\caption{Frequency of cycle types for two Schubert problems.}
\label{Ta:two}
%%%%%%%%%%%%%%%%%%%%%%%%%%%%%%%%%%%%%%%%
%
% Schubert Problem : [6, [3, 7, 8, 9], [4, 6, 7, 9], [4, 6, 7, 9], [4, 6, 7, 9], [2, 6, 8, 9]]
%
\[
\begin{tabular}{|c|r|r|} \hline 
  \multicolumn{3}{|c|}{$\sTh\cdot\raisebox{-2pt}{\sTII}^3\cdot\sFI=6$}\rule{0pt}{13pt}\\\hline 
  \multicolumn{3}{|c|}{Cycles found in 1998176 samples}\\\hline 
  Cycle Type & Frequency & Fraction \\
  \hline
   (6)           & 333487 & 8.01099  \\ \hline
   (3,3)         & 332912 & 7.99718  \\ \hline
   (2,4)         & 249863 & 6.00219  \\ \hline
   (2,2,2)       & 291238 & 6.99609  \\ \hline
   (1,1,4)       & 250175 & 6.00968  \\ \hline
   (1,1,2,2)     & 373920 & 8.98227  \\ \hline
   (1,1,1,1,2)   & 125098 & 3.00509  \\ \hline
   (1,1,1,1,1,1) &  41483 & 0.99650  \\ \hline
\end{tabular}
%%%%%%%%%%%%%%%%%%%%%%%%%%%%%%%%%%%%%%%% After 2,000,000 trials (1998176 successful)
%  evalf(1998176/41483);
%(8.01099-8)/.08,(7.99718-8)/.08,(6.00219-6)/.06,(6.99609-7)/.07,(6.00968-6)/.05,(8.98227-9)/.09,(3.00509-3)/.03,(.9965-1)/.01;
% 0.137375, -0.03525, 0.0365, -0.055857, 0.1936, -0.197, 0.16967, -0.350
% 
\qquad
%%%%%%%%%%%%%%%%%%%%%%%%%%%%%%%%%%%%%%%%
%
% Schubert Problem : [6, [5, 7, 8, 9], [4, 6, 7, 9], [4, 5, 8, 9], [2, 6, 8, 9], [3, 4, 8, 9]]
%
\raisebox{-7pt}{%
\begin{tabular}{|c|r|r|} \hline 
  \multicolumn{3}{|c|}{$\sI\cdot\raisebox{-2pt}{\sTII}\cdot\sTT\cdot\sFI\cdot\sThTh=6$}\rule{0pt}{13pt}\\\hline 
  \multicolumn{3}{|c|}{Cycles found in 1997950 samples}\\\hline 
  Cycle Type & Frequency & Fraction \\
  \hline
   (6) & 332926 & 11.99763  \\ \hline
   (3,3) & 111610 & 4.02208 \\ \hline
   (2,4) & 500352 & 18.03115  \\ \hline
   (2,2,2) & 167264 & 6.02768  \\ \hline
   (1,2,3) & 332565 & 11.98462  \\ \hline
   (1,1,2,2) & 248642 & 8.96030 \\ \hline
   (1,1,1,3) & 111434 & 4.01574 \\ \hline
   (1,1,1,1,2) & 165746 & 5.97298 \\ \hline
  (1,1,1,1,1,1) & 27411 & 0.98781\\ \hline
\end{tabular}}
%%%%%%%%%%%%%%%%%%%%%%%%%%%%%%%%%%%%%%%  After 2,000,000 trials, 1997950 successful
%%%  evalf(1997950/27411);
%(11.99763-12)/.12,(4.02208-4)/.04,(18.03115-18)/.18,(6.02768-6)/.06,(11.98462-12)/.12;
%(8.96030-9)/.09,(4.01574-4)/.04,(5.97298-6)/.06,(0.98781-1)/.01;
%-0.01975, 0.552, 0.1730556, 0.46133, -0.1281667, -0.441, 0.3935, -0.450333, -1.219
\]
%%%%%%%%%%%%%%%%%%%%%%%%%%%%%%%%%%%%%%%%%%%%%%%%%%%%%%%%%%%%%%%%%%%%%%%%%%%%%%%%%

\end{table}
%%%%%%%%%%%%%%%%%%%%%%%%%%%%%%%%%%%%%%%%%%%%%%%%%%%%%%%%%%%%%%%%%%%%%%%%%%%%%%%%%
The discrepancies between two million and the number of computed Frobenius elements were instances where either
the ideal $I$ in $\FF_{10007}[x]$ was positive-dimensional, the eliminant did not have the
expected degree, or it was not square-free.
Both Schubert Galois groups are subgroups of $S_6$.
Dividing the total number of cycles computed by the number whose cycle type is $(1,1,1,1,1,1)$ (these Frobenius elements
were the identity) gives $48.1686$ and $72.8886$, respectively. 
The divisors of $|S_6|=6!=720$ closest to these numbers are 48 and 72, and the observed cycle types are consistent with
their Galois groups being $S_2\wr S_3$ and $S_3\wr S_2$, which have orders 48 and 72, respectively.
For each cycle type, we determined the fraction (out of 48 and 72) of Frobenius elements with that cycle type.
Other than the identity in the second  group, the observed fraction was within $0.5\%$ of
the actual distribution in the expected Galois group.
In the next section, we show that these Schubert problems have Galois group equal to $S_2\wr S_3$ and 
$S_3\wr S_2$, respectively.

%\newpage
%%%%%%%%%%%%%%%%%%%%%%%%%%%%%%%%%%%%%%%%%%%%%%%%%%%%%%%%%%%%%%%%%%%%%%%%%%%%%%%%%
\section{Fibrations of Schubert Problems}\label{S:geometry}

The essential enriched Schubert problems in $\Gr(4,9)$ share a common structure which explains their Galois groups:
their branched covers $\calX_\blambda\to(\Fln)^s$ form decomposable projections in the terminology of Am\'endola and
  Rodriguez~\cite{AR}. 
More precisely, over a dense open subset of the base $(\Fln)^s$ their solutions form a fiber bundle with base and fibers
Schubert problems in smaller Grassmannians. 
This is similar to the structure identified by Esterov~\cite{Esterov} for systems of sparse polynomials.
As the branched cover is decomposable, the corresponding Galois group is  a subgroup
of a wreath product~\cite{SDSS,PirolaSchlesinger} and therefore imprimitive. 
Thus any fibered Schubert problem has Galois group a subgroup of a wreath product.

There are two families of enriched Schubert problems in $\Gr(4,8)$ that are fibrations, each in different way.
This persists to $\Gr(4,9)$.
We treat each type of fibration in each of the next two subsections.
The Schubert problems in Section~\ref{S:typeI} are instances of a more general construction, called \demph{composition},
which is studied in~\cite{SWY}.
The Schubert problems in Section~\ref{S:typeII} are not compositions, and we do not yet know of a general construction for
them.

%%%%%%%%%%%%%%%%%%%%%%%%%%%%%%%%%%%%%%%%%%%%%%%%%%%%%%%%%%%%%%%%%%%%%%%%%%%%%%%%%
\begin{definition}\label{D:fibration}
 Let $\blambda$, $\bmu$, and $\bnu$ be Schubert problems in $\Gr(k{+}a,n{+}b)$, $\Gr(k,n)$, and  $\Gr(a,b)$, respectively.
 Then \demph{$\blambda$ is fibered over $\bmu$ with fiber $\bnu$}
 if the following holds:
 \begin{enumerate}
   \item For every general instance $\calFdot\in(\Fl_{n+b})^s$ of $\blambda$, there is a subspace $V\subset\CC^{n+b}$ of
         dimension $n$ and an instance $\calEdot$ of $\bmu$ in $\Gr(k,V)$ such that for every 
         $H\in\Omega_{\blambda}\calFdot$, we have $H\cap V\in\Omega_{\bmu}\calEdot$.
  \item If we set $\defcolor{W}:=\CC^{n+b}/V$, then for any $h\in \Omega_{\bmu}\calEdot$, there is an instance
    $\calGdot(h)$ of $\bnu$ in $\Gr(a,W)$ such that if $H\in\Omega_\blambda\calFdot$ with $\defcolor{h}:=H\cap V$, then
    $H/h\in\Gr(a,W)$ is a solution to $\Omega_{\bnu}\calGdot(h)$.
  \item The map $H\mapsto (h,H/h)$, where $h:=H\cap V$, is a bijection between the sets of solutions 
    $\Omega_{\blambda}\calFdot$ and
    \[
       \coprod_{h\in \Omega_{\bmu}\calEdot}\{h\}\times \Omega_{\bnu}\calGdot(h)\ .
    \]
  \item For a given subspace $V\simeq\CC^{n}$ of $\CC^{n+b}$, all general instances $\calEdot$ of $\bmu$ in $\Gr(k,V)$
    may be obtained from flags $\calFdot$ that induce the space $V$ in (1). 
    For a general such $\calEdot$ and $h\in\Omega_{\bmu}\calEdot$, the set of instances $\calGdot(h)$ of $\bnu$
    in $\Gr(a,W)$ which arise 
    also contains an open dense subset of the set of general instances of $\calGdot(h)$.
  \end{enumerate}
 This is called a fibration  as the bijection of (3) realizes $\Omega_{\blambda}\calFdot$ as a fibration over $\Omega_{\bmu}\calEdot$
   with fiber $\Omega_{\bnu}\calGdot(h)$ over $h\in\Omega_{\bmu}\calEdot$.
\hfill$\diamond$
\end{definition}
%%%%%%%%%%%%%%%%%%%%%%%%%%%%%%%%%%%%%%%%%%%%%%%%%%%%%%%%%%%%%%%%%%%%%%%%%%%%%%%%%

 Oftentimes, given $V$ we identify a subspace $W(h)\subset\CC^{n+b}$ complementary to
 $V$ and an instance $\calGdot(h)$ of $\bnu$ in $\Gr(a,W(h))$ such that
 $H\in\Omega_{\blambda}\calFdot \Rightarrow H=(H\cap V)\oplus(H\cap W(h))$, and 
 \begin{equation}\label{Eq:Sols_Fibration}
   \Omega_\blambda \calFdot\ =\ 
    \{ h\oplus K \mid h\in \Omega_{\bmu}\calEdot \mbox{ and } K\in \Omega_\bnu \calGdot(h)\}\,.
 \end{equation}
 This suffices as subspaces $W$ complimentary to $V$ are canonically identified with $\CC^{n+b}/V$ by the composition
 $W\to\CC^{n+b}\twoheadrightarrow\CC^{n+b}/V$, and this implies part (3) of Definition~\ref{D:fibration}.
 The more general statement of Definition~\ref{D:fibration} is adapted from~\cite{SWY}.

%%%%%%%%%%%%%%%%%%%%%%%%%%%%%%%%%%%%%%%%%%%%%%%%%%%%%%%%%%%%%%%%%%%%%%%%%%%%%%%%%
\begin{lemma}\label{L:Fibered}
 If $\blambda$ is a Schubert problem fibered over $\bmu$ with fiber $\bnu$, then
 $d(\blambda) = d(\bmu) d(\bnu)$ and  $\Gal_\blambda$ is a subgroup of the wreath product
 $\Gal_\bnu \wr \Gal_\bmu$ whose projection to $\Gal_\bmu$ is surjective.
 Furthermore, the kernel of the surjection $\Gal_\blambda\to\Gal_\bmu$ is a subgroup of $\Gal_{\bnu}^{d(\bmu)}$
 that is stable under the action of $\Gal_\bmu$ and whose projection to each $\Gal_{\bnu}$ factor is surjective.
\end{lemma}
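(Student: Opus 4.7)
The plan is to use the bijection in part~(3) of Definition~\ref{D:fibration} to set up a wreath-product structure for $\Gal_\blambda$, then invoke conditions~(4)(a) and~(4)(b) to establish the two surjectivity statements. First I would establish the degree count and the coarse wreath-product inclusion. The bijection $H\mapsto(H\cap V,\, H/(H\cap V))$ of part~(3) gives
\[
    \Omega_\blambda\calFdot\ \longleftrightarrow\ \bigsqcup_{h\in\Omega_\bmu\calEdot}\Omega_\bnu\calGdot(h)\,,
\]
so $d(\blambda)=d(\bmu)\,d(\bnu)$. Label $\Omega_\bmu\calEdot=\{h_1,\dotsc,h_{d(\bmu)}\}$ and each fiber $\Omega_\bnu\calGdot(h_i)=\{K_{i,1},\dotsc,K_{i,d(\bnu)}\}$ to identify $\Omega_\blambda\calFdot$ with $[d(\bnu)]\times[d(\bmu)]$. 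The assignment $H\mapsto H\cap V$ is regular on $\calX_\blambda$ over an open subset of $(\Fln)^s$, so monodromy preserves the partition of solutions into fibers. Thus $\Gal_\blambda\subset S_{d(\bnu)}\wr S_{d(\bmu)}$, with a well-defined quotient $p\colon\Gal_\blambda\to S_{d(\bmu)}$.

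Next I would identify the image of $p$ with $\Gal_\bmu$. For any loop $\gamma$ in $(\Fln)^s$ based at $\calFdot$, the induced loop in the parameter space of $\bmu$-instances has monodromy $p(\gamma)\in\Gal_\bmu$, so $p$ maps into $\Gal_\bmu$. For surjectivity, condition~(4)(a) asserts that, with $V$ fixed, the parameter map from $\calFdot$-space to the parameter space of $\bmu$-instances is dominant; hence any loop in $\bmu$-instance space lifts to a loop in $(\Fln)^s$, realizing a prescribed element of $\Gal_\bmu$.

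Then I would analyze the kernel $N:=\ker p$. Since elements of $N$ fix each $h_i$, they act within each fiber $\Omega_\bnu\calGdot(h_i)$, yielding $N\hookrightarrow\prod_{i=1}^{d(\bmu)}S_{d(\bnu)}$. The $i$-th component of $\gamma\in N$ is monodromy along the induced loop in $\bnu$-instance space for $h_i$ and thus lies in $\Gal_\bnu$, giving $N\subset\Gal_\bnu^{d(\bmu)}$. For the surjection onto each factor, restrict to loops in the fiber $F_{\calEdot}\subset(\Fln)^s$ of the parameter map to $\bmu$-instance space. By condition~(4)(b), for each $i$ the map $F_{\calEdot}\to\{\bnu\text{-instances for }h_i\}$ sending $\calFdot\mapsto\calGdot(h_i)$ is dominant, so any loop based at $\calGdot(h_i)$ lifts to one in $F_{\calEdot}$; such a lift lies in $N$ and projects to the chosen element of $\Gal_\bnu$ in the $i$-th coordinate. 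Finally, $N$ is normal in $\Gal_\blambda$ as it is a kernel, so it is stable under the conjugation action of $\Gal_\bmu\cong\Gal_\blambda/N$.

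The main obstacle is this last claim: varying $\calFdot$ within $F_{\calEdot}$ simultaneously varies \emph{all} of $\calGdot(h_1),\dotsc,\calGdot(h_{d(\bmu)})$, so loops in different factors may be correlated. Fortunately, the lemma asserts only that each individual projection $N\to\Gal_\bnu$ is surjective, not that $N$ equals the full product $\Gal_\bnu^{d(\bmu)}$, and condition~(4)(b) supplies precisely the single-factor dominance needed. Were the joint map to $\prod_i\{\bnu\text{-instances for }h_i\}$ also dominant one would obtain the stronger conclusion $N=\Gal_\bnu^{d(\bmu)}$, but that claim is not made here.
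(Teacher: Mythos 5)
Your proposal tracks the paper's own proof step by step: the degree count from part~(3), the wreath-product inclusion from fiber preservation, surjectivity onto $\Gal_\bmu$ from the first half of part~(4), normality of the kernel giving $\Gal_\bmu$-stability, and surjectivity onto each $\Gal_\bnu$ factor from the second half of part~(4) by restricting to flags inducing a fixed $\calEdot$. Your closing observation — that only per-factor dominance (not joint dominance) is needed because the lemma claims only per-factor surjectivity, with the full product reserved for the later corollaries — is exactly the right reading of what the lemma does and does not assert.
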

%%%%%%%%%%%%%%%%%%%%%%%%%%%%%%%%%%%%%%%%%%%%%%%%%%%%%%%%%%%%%%%%%%%%%%%%%%%%%%%%%

%%%%%%%%%%%%%%%%%%%%%%%%%%%%%%%%%%%%%%%%%%%%%%%%%%%%%%%%%%%%%%%%%%%%%%%%%%%%%%%%%
\begin{proof}
 The bijection from part (3) of Definition~\ref{D:fibration} shows that $d(\blambda) = d(\bmu) d(\bnu)$.
 Let $\calFdot$ be a general instance of $\blambda$ and $\calEdot$ the induced instance of $\bmu$.
  Since the group $\Gal_{\blambda}$ preserves the fibration $\Omega_{\blambda}\calFdot\to\Omega_{\bmu}\calEdot$ of finite
  sets, we have the inclusion $\Gal_{\blambda}\subset\Gal_\bnu \wr \Gal_\bmu$, together with a map
  $\Gal_{\blambda}\to \Gal_\bmu$ whose kernel \defcolor{$\Gamma$} is a subgroup of $\Gal_\bnu^{d(\bmu)}$.

 By part (4) of Definition~\ref{D:fibration}, if $\CC^n\simeq V\subset\CC^{n+b}$, then all general instances $\calEdot$ of
  $\bmu$ in $\Gr(k,V)$ arise by the construction of part (1).
  Thus any based loop through general instances of $\bmu$ lifts to a loop through general instances of $\blambda$.
  This implies the surjectivity of the map $\Gal_{\blambda}\to\Gal_{\bmu}$, and that the kernel $\Gamma$ is stable under
  conjugation by elements of $\Gal_{\bmu}$.

 To study this kernel $\Gamma$, fix a general instance $\calEdot$ of $\bmu$ and $h\in\Omega_{\bmu}\calEdot$.
 As the flags $\calGdot(h)$ induced by flags that induce $\calEdot$ are general, the same arguments as in the previous
 paragraph implies that the map $\Gamma\to\Gal_\nu$ acting on solutions $H/h$ to $\Omega_\bnu\calGdot(h)$ is surjective.
\end{proof}
%%%%%%%%%%%%%%%%%%%%%%%%%%%%%%%%%%%%%%%%%%%%%%%%%%%%%%%%%%%%%%%%%%%%%%%%%%%%%%%%%

The difficulty in establishing that $\Gal_{\blambda}=\Gal_\bnu \wr \Gal_\bmu$ is to show that
$\Gamma=(\Gal_\bnu)^{d(\bmu)}$, i.e.\ that monodromy acts sufficiently independently on each $\Omega_{d(\bnu)}\calGdot(h)$
for $h\in\Omega_{\bmu}\calEdot$. 

For partitions $\lambda$ and $\mu$, let  \defcolor{$\lambda + \mu$} be their component-wise sum, 
$(\lambda+\mu)_i = \lambda_i + \mu_i$, which is always a partition. 
Let \defcolor{$(\lambda,\mu)$} be the decreasing rearrangement of the parts of $\lambda$ and $\mu$.
In this paper, we will only consider $\lambda+\mu$ when  $\mu_{a+1} =0$ and $\lambda_1=\cdots= \lambda_a = r$, so that $\lambda$ has $r$
columns with the last of height at least $a$.
Then the columns of $\lambda+\mu$ are the $r$ columns of $\lambda$ followed by the columns of $\mu$.
We will also only consider $(\lambda,\mu)$ when  $\lambda_r  \geq \mu_1$ and $\lambda_{r+1} = 0$, so that the rows of
$(\lambda,\mu)$ are the rows of $\lambda$ followed by the rows of $\mu$.

Notice that if \defcolor{$\lambda^t$} denotes the conjugate partition obtained from $\lambda$ by interchanging rows
with columns (e.g., matrix transpose), then  $(\lambda + \mu)^t = (\lambda^t,\mu^t)$.
Note that $|\lambda + \mu|=|(\lambda, \mu)|=|\lambda|+|\mu|$.

%%%%%%%%%%%%%%%%%%%%%%%%%%%%%%%%%%%%%%%%%%%%%%%%%%%%%%%%%%%%%%%%%%%%%%%%%%%%%%%%%
\begin{example}
 We illustrate these definitions.
 Write \defcolor{$1^{a}$} for the partition $(1,\dotsc,1)$ with $a$ parts of size 1 and \defcolor{$c$} 
  for the partition $(c)$ with one part of size $c$.
 If $\mu=(3,1)$, we display Young diagrams for $1^3+\mu$, $(4, \mu)$, 
 $(4, \mu)^t=1^4+(2,1,1)$, as well as $(2,2)+ 1$ and  $((2,2), 1)$.
\[
    \includegraphics{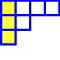}\qquad
    \includegraphics{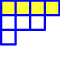}\qquad
    \includegraphics{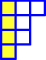}\qquad
    \includegraphics{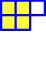}\qquad
    \includegraphics{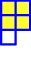}
\]
 In these, we have shaded the  portions $1^3$, $4$, $1^4$, and $(2,2)$.\hfill{$\diamond$}
\end{example}
%%%%%%%%%%%%%%%%%%%%%%%%%%%%%%%%%%%%%%%%%%%%%%%%%%%%%%%%%%%%%%%%%%%%%%%%%%%%%%%%%
%\comm{Even though we are using this definition for specific cases, it may be useful to the reader if we show an example when $\lambda$ has more than one part, e.g. $(\lambda, \mu)$ for $\lambda= (4,2)$ and $\mu = (3,1)$ is equal to $(4,3,2,1)$}

%%%%%%%%%%%%%%%%%%%%%%%%%%%%%%%%%%%%%%%%%%%%%%%%%%%%%%%%%%%%%%%%%%%%%%%%%%%%%%%%%
\subsection{Fibrations of Type I}\label{S:typeI}
We study the Galois groups of 120 enriched Schubert problems which all have similar constructions.
In a Schubert problem $\bnu = (\nu^1,\dotsc, \nu^s)$ in $\Gr(a,b)$, some of the partitions
$\nu^i$ could be $0$, and therefore impose no conditions.
While these do not affect the geometry of a Schubert problem, this flexibility is important for 
the following result.

%%%%%%%%%%%%%%%%%%%%%%%%%%%%%%%%%%%%%%%%%%%%%%%%%%%%%%%%%%%%%%%%%%%%%%%%%%%%%%%%%
\begin{theorem}\label{T:First_Family}
 Suppose that $a < b$ and $\bnu$ is a Schubert problem in $\Gr(a,b)$. 
 Then
\[
   \blambda\ :=\  \bigl((b{-}a{+}1, \nu^1)\,,\, (b{-}a{+}1, \nu^2)\,,\, 
      1^{a+1}+\nu^3\,,\, 1^{a+1}+\nu^4\,,\, \nu^5, \dotsc, \nu^s\bigr)
\]
is a Schubert problem in $\Gr(2{+}a, 4{+}b)$ that is fibered over $\I^4$ in $\Gr(2,4)$ with fiber $\bnu$.
Its Galois group $\Gal_{\blambda}$ is a subgroup of $\Gal_{\bnu}\wr S_2$ as in Lemma~\ref{L:Fibered}.
\end{theorem}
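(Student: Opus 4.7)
The plan is to verify Definition~\ref{D:fibration} directly for $\blambda$, with base $\bmu=\I^4$ in $\Gr(2,4)$ and fiber $\bnu$ in $\Gr(a,b)$, and then invoke Lemma~\ref{L:Fibered}. By Proposition~\ref{P:Two} the Galois group of $\I^4=2$ in $\Gr(2,4)$ is $S_2$, so the lemma will deliver $\Gal_{\blambda}\subset\Gal_{\bnu}\wr S_2$. Given a general instance $\calFdot=(\Fdot^1,\dots,\Fdot^s)$ of $\blambda$, I extract the two ambient spaces of the fibration from the leading flags as
\[
 V\ :=\ F^1_2+F^2_2 \qquad\text{and}\qquad V'\ :=\ F^3_{2+b}\cap F^4_{2+b}.
\]
For generic $\calFdot$, $\dim V=4$, $\dim V'=b$, and $\CC^{4+b}=V\oplus V'$; let $\pi\colon\CC^{4+b}\to V'$ denote the projection with kernel $V$.

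Next I would show every $H\in\Omega_{\blambda}\calFdot$ decomposes as $H=h\oplus K$ with $h:=H\cap V\in\Gr(2,V)$ and $K:=H\cap V'\in\Gr(a,V')$. The row-$1$ conditions of $(b{-}a{+}1,\nu^i)$ for $i=1,2$ produce two independent lines in $H\cap V$, forcing $\dim h\geq 2$. The row-$(a{+}1)$ conditions of $1^{a+1}+\nu^i$ for $i=3,4$ give $\dim(H\cap F^i_{2+b})\geq a+1$; since $V'\subset F^i_{2+b}$ one computes $H\cap F^i_{2+b}=(h\cap F^i_{2+b})\oplus K$, forcing $\dim K\geq a$. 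As $\dim H=2+a$ and $V\cap V'=0$, both bounds are equalities. The four 2-planes $F^1_2,\ F^2_2,\ F^3_{2+b}\cap V,\ F^4_{2+b}\cap V\subset V$ now form the base instance $\calEdot$ of $\I^4$, and the conditions above show $h$ meets each of them, giving $h\in\Omega_{\bmu}\calEdot$. For the fiber, each remaining Schubert condition on $H$ (the lower rows of the four leading partitions together with the conditions $\nu^5,\dots,\nu^s$) translates, via the direct sum and $\pi$, into an $h$-dependent Schubert condition on $K$, and collectively these give flag data $\calGdot(h)$ in $V'$ with $K\in\Omega_{\bnu}\calGdot(h)$. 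The bijection $H\leftrightarrow(h,K)$ is immediate from the direct sum, and generality of $\calEdot$ and $\calGdot(h)$ as $\calFdot$ varies follows from a parameter count, completing Definition~\ref{D:fibration}.

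The main obstacle is the row-by-row translation of the remaining Schubert conditions on $H$ into the correct Schubert conditions on $K$. A naive projection such as $\pi(F^i_m)$ or $\pi((F^i_m+h)\cap V')$ generally yields a subspace of $V'$ of the wrong dimension: elements of $H\cap F^i_m$ may be diagonal vectors $y+z$ with $y\in h$ and $z\in K$ but neither individually in $F^i_m$, and individual-row codimensions can drop when restricted to the fibered locus with $h$ fixed. The correct flag subspace $G(h)\subset V'$ is built by composing $\pi$ with a linear map valued in either $V/F^i_2$ (for $i=1,2$) or $F^i_{2+b}\cap V$ (for $i=3,4$) and taking the preimage of an $h$-dependent subspace of the target; the key identities $F^i_2\subset V$ for $i=1,2$ and $p_V(F^i_m)\subset F^i_{2+b}\cap V$ for $i=3,4$ confine the diagonal contributions to the base 2-plane attached to partition $i$ and, combined with $h$ already solving the $\I^4$ base, convert apparent higher-codimension conditions on $K$ into exactly the Schubert conditions prescribed by $\nu^i$ in $\Gr(a,V')$. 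A uniform row-by-row check then shows $G(h)$ has generic dimension $b-a+\ell-\nu^i_\ell$ and that $\dim(H\cap F^i_m)\geq j$ becomes $\dim(K\cap G(h))\geq\ell$. With this translation in hand, Lemma~\ref{L:Fibered} yields $\Gal_{\blambda}\subset\Gal_{\bnu}\wr S_2$.
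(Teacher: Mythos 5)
Your overall architecture matches the paper: you take $V:=F^1_2+F^2_2\simeq\CC^4$ as the ambient space of the base problem $\I^4$, and $V':=F^3_{b+2}\cap F^4_{b+2}\simeq\CC^b$ (the paper's $W$) as the ambient space of the fiber; you decompose $H=h\oplus K$ with $h=H\cap V$, $K=H\cap V'$; and you invoke Lemma~\ref{L:Fibered} at the end. These are exactly the paper's choices. You also omit (but flag as routine) the codimension count showing $\blambda$ is in fact a Schubert problem in $\Gr(2{+}a,4{+}b)$; that part is fine.

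The gap is in your treatment of the fiber flags. You reject the construction ``$\pi((F^i_m+h)\cap V')$'' as giving subspaces of the wrong dimension, and then sketch a replacement via ``composing $\pi$ with a linear map valued in $V/F^i_2$ or $F^i_{2+b}\cap V$ and taking preimages of $h$-dependent subspaces,'' which is never actually pinned down. But the construction you discard is precisely the one the paper uses: it sets $\Gdot^i(h):=W\cap(h+\Fdot^i)$, i.e.\ the flag on $W$ induced by $h+\Fdot^i$, so that $G^i_c(h)=(h+F^i_{c+\delta_i})\cap W$ with the index shift $\delta_i$ being $2$ for $i\geq 5$, $1$ for $i\in\{3,4\}$, and $3$ for $i\in\{1,2\}$. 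The ``wrong dimension'' problem you worry about is exactly what this index shift corrects, and the bulk of the paper's proof is the row-by-row verification (the three cases of~\eqref{Eq:equiv}, using~\eqref{Eq:Claim_1}, \eqref{Eq:Claim_2}, \eqref{Eq:Claim_3}) that with these shifts $\dim(K\cap G^i_c(h))$ translates correctly into $\dim((h\oplus K)\cap F^i_{c+\delta_i})$, and that the shifted indices match the partitions $\lambda^i$ versus $\nu^i$. That bookkeeping is the heart of the theorem and is missing from your proposal. As written, your sketch stops exactly where the real content begins, and the alternative you gesture at is both unnecessary and insufficiently specified to be checked.

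A smaller issue: your inference ``$H\cap F^i_{2+b}=(h\cap F^i_{2+b})\oplus K$, forcing $\dim K\geq a$'' presumes $H=h\oplus K$ before that decomposition has been established. The paper instead shows $\dim(H\cap W)\geq a$ directly from $W=F^3_{b+2}\cap F^4_{b+2}$ together with $\dim(H\cap F^i_{b+2})=a{+}1$ for $i=3,4$, and then uses $V\cap W=0$ to get equality and the direct sum; your argument can be repaired to say the same thing, but as stated it is circular.
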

%%%%%%%%%%%%%%%%%%%%%%%%%%%%%%%%%%%%%%%%%%%%%%%%%%%%%%%%%%%%%%%%%%%%%%%%%%%%%%%%%

%%%%%%%%%%%%%%%%%%%%%%%%%%%%%%%%%%%%%%%%%%%%%%%%%%%%%%%%%%%%%%%%%%%%%%%%%%%%%%%%%
\begin{example}\label{Ex:G25}
  Let $a=2$ and $b=5$.
   The Schubert problems $(\I^4,\I^2)$, $(0,0,\I^2,\I^4)$, and $(0,\I,0,\I,\I^4)$ 
   in $\Gr(2,5)$  are the same
 geometrically, and each has five solutions. 
 The construction of Theorem~\ref{T:First_Family} gives the following Schubert problems in $\Gr(4,9)$,
\[
   \raisebox{-7.5pt}{\includegraphics{figures/41}}^2 \cdot
   \raisebox{-14.5pt}{\includegraphics{figures/211}}^2  \cdot\bI^2\, ,
   \qquad 
   \includegraphics{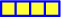}^2\cdot\raisebox{-14.5pt}{\includegraphics{figures/211}}^2 \cdot\bI^4\, ,
   \quad\mbox{and}\quad
   \includegraphics{figures/4}\cdot\raisebox{-7.5pt}{\includegraphics{figures/41}}\cdot
   \raisebox{-14.5pt}{\includegraphics{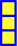}}\cdot
   \raisebox{-14.5pt}{\includegraphics{figures/211}} \cdot\bI^4\, .
\]
 By Theorem~\ref{T:First_Family}, each is fibered over $\I^4=2$ in $\Gr(2,4)$ with fiber $\I^6=5$ in $\Gr(2,5)$.
 By Lemma~\ref{L:Fibered}, each has ten solutions and has Galois group a subgroup of $S_5\wr S_2$.

 By Theorem~\ref{T:First_Family}, every nontrivial Schubert problem in $\Gr(2,5)$ gives an enriched Schubert
 problem in $\Gr(4,9)$ that is fibered over $\I^4=2$.
 Omitting trivial conditions of 0 and writing in multiplicative form, these Schubert problems in $\Gr(2,5)$ are
 \[
  2\ =\ \raisebox{-5.5pt}{\TI}\cdot\I^3\ =\ \raisebox{-5.5pt}{\II}\cdot\I^4\ =\ \T^2\cdot\I^2\,,\ \ 
  3\ =\ \T\cdot\I^4\,,
  \quad\mbox{and}\quad
  5\ =\ \I^6\,,
 \]
 and each has full symmetric Galois group, by Proposition~\ref{P:Two}.
 These and others geometrically equivalent to them give enriched Schubert problems whose Galois groups are
 subgroups of wreath products $S_d\wr S_2$, for $d=2,3,5$.
 Our computation of Frobenius elements reported in Section~\ref{SS:Frobnenius} and detailed on our
 webpage\footnote{{\tt http://www.math.tamu.edu/\~{}sottile/research/stories/GIVIX}} shows
 that each of these has Galois group over $\QQ$ equal to the wreath product. 
 Table~\ref{Ta:107} gives the number of Schubert problems in $\Gr(4,9)$ fibered over $\I^4=2$ with the given
 fiber.\hfill{$\diamond$}
\end{example}
%%%%%%%%%%%%%%%%%%%%%%%%%%%%%%%%%%%%%%%%%%%%%%%%%%%%%%%%%%%%%%%%%%%%%%%%%%%%%%%%%

%%%%%%%%%%%%%%%%%%%%%%%%%%%%%%%%%%%%%%%%%%%%%%%%%%%%%%%%%%%%%%%%%%%%%%%%%%%%%%%%% 
\begin{table}[htb]
  \caption{Number of Schubert problems in $\Gr(4,9)$ fibered over ${\protect\I}^4=2$ arising
     from Theorem~\ref{T:First_Family} and Example~\ref{Ex:G25} when $a=2$ and $b=5$.}
  \label{Ta:107}
%\[
  \begin{tabular}{|c|c|c|c|c|c|}\hline  \raisebox{-8.pt}{\rule{0pt}{20.5pt}}
    Fiber & $\raisebox{-5.5pt}{\TI}\cdot\I^3$&$\raisebox{-5.5pt}{\II}\cdot\I^4$&
                     $\T^2\cdot\I^2$&$\T\cdot\I^4$&$\I^6$\\\hline \rule{0pt}{12.5pt}
   Number &20&21&26&21&9\\\hline \rule{0pt}{12.5pt}
   Galois Group over $\CC$ &$S_2\wr S_2$&$S_2\wr S_2$&$S_2\wr S_2$&$S_3\wr S_2$&$S_5\wr S_2$\\\hline
  \end{tabular}
%\]
\end{table}
%%%%%%%%%%%%%%%%%%%%%%%%%%%%%%%%%%%%%%%%%%%%%%%%%%%%%%%%%%%%%%%%%%%%%%%%%%%%%%%%% 

%%%%%%%%%%%%%%%%%%%%%%%%%%%%%%%%%%%%%%%%%%%%%%%%%%%%%%%%%%%%%%%%%%%%%%%%%%%%%%%%%
\begin{corollary}\label{C:First_Family}
  The $97$ Schubert problems of Table~\ref{Ta:107} all have the claimed Galois groups over $\CC$.
\end{corollary}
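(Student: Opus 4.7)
The plan is to prove that for each of the $97$ Schubert problems $\blambda$ in Table~\ref{Ta:107}, the equality $\Gal_\blambda=\Gal_\bnu\wr S_2$ holds, where $\bnu$ is the fiber problem identified in Theorem~\ref{T:First_Family} and $d:=d(\bnu)\in\{2,3,5\}$. By Theorem~\ref{T:First_Family} and Lemma~\ref{L:Fibered}, $\Gal_\blambda$ is already contained in $S_d\wr S_2$ and surjects onto $\Gal_\bmu=S_2$ (Proposition~\ref{P:Two}), with kernel $\Gamma\subseteq S_d\times S_d$ that is swap-stable and has surjective coordinate projections. It therefore suffices to show $\Gamma=S_d\times S_d$, for which I will appeal to Goursat's lemma.

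\smallskip

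Applied to a swap-stable subgroup $\Gamma\subseteq S_d\times S_d$ with surjective projections, Goursat's lemma gives a normal subgroup $N\trianglelefteq S_d$ and an involution $\phi\colon S_d/N\to S_d/N$ such that $\Gamma=\{(g_1,g_2)\mid g_1N=\phi(g_2N)\}$. If $\Gamma$ contains an element $(\tau,e)$ with $\tau$ a transposition, then $\tau\in N$; but for each $d\in\{2,3,5\}$ the only normal subgroup of $S_d$ containing a transposition is $S_d$ itself, forcing $N=S_d$ and hence $\Gamma=S_d\times S_d$. Thus the entire problem reduces to exhibiting a single element of $\Gamma$ of the form $(\tau,e)$.

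\smallskip

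To produce this transposition I will degenerate a single flag in the ambient space $\CC^9$. Fix a general instance $\calFdot$ of $\blambda$ and let $h_1,h_2\in\Omega_\bmu\calEdot$ be the two base solutions, with induced general instances $\calGdot(h_1),\calGdot(h_2)$ of $\bnu$ in the respective five-dimensional fibers $W(h_1),W(h_2)$, both identified with $\CC^9/V$; these instances are general by part~(4) of Definition~\ref{D:fibration}. Each of the five fiber problems $\bnu$ appearing in Table~\ref{Ta:107} contains at least one condition $\I$, so some flag $\Fdot^j\in\calFdot$ induces an $\I$-type subspace in each $W(h_i)$. Varying the appropriate subspace of $\Fdot^j$ in a pencil in $\CC^9$ yields pencils of induced flags $\calGdot_t(h_1)$ and $\calGdot_t(h_2)$, and Remark~\ref{R:Galois_Comment} applied to the first pencil produces, for a generic choice, a parameter value $t_1$ at which $\Omega_\bnu\calGdot_{t_1}(h_1)$ has a unique double solution and monodromy around $t_1$ induces a simple transposition on the $d$ solutions over $h_1$.

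\smallskip

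The main obstacle is the independence claim: for a sufficiently general pencil, the fiber instance over $h_2$ must remain transverse at $t=t_1$, so that monodromy around $t_1$ acts trivially on the $d$ solutions over $h_2$. This is to be verified by checking that the two ``bad'' loci
\[
  \calZ_i\ :=\ \{F\mid\Omega_\bnu\calGdot(h_i)\text{ degenerates when the moving subspace of } \Fdot^j \text{ equals } F\}\qquad(i=1,2)
\]
are distinct proper subvarieties of the relevant Grassmannian, because the linear maps sending the moving subspace of $\Fdot^j$ into $W(h_1)$ and into $W(h_2)$ factor through the two distinct identifications $\CC^9/V\simeq W(h_i)$ determined by $h_1$ and $h_2$. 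A generic pencil meets $\calZ_1\setminus\calZ_2$ in smooth points of $\calZ_1$, and the monodromy around any such point produces the desired element $(\tau,e)\in\Gamma$, completing the proof via the Goursat step above.
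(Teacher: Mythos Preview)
Your overall strategy matches the paper's: reduce to producing an element $(\tau,e)\in\Gamma$ with $\tau$ a transposition, then use the structure of $\Gamma\subset S_d\times S_d$ (you phrase this via Goursat; the paper conjugates directly, but these are equivalent). The gap is in your independence step.

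You assert that the two discriminant loci $\calZ_1$ and $\calZ_2$ are distinct because the assignments $F\mapsto G^{ai}_3$ ``factor through two distinct identifications $\CC^9/V\simeq W(h_i)$.'' That the two linear maps are different does not by itself prevent the pullbacks of the discriminant hypersurface from coinciding, and you give no further argument. This is the entire content of the corollary, and it is exactly where the paper does real work. Rather than compare discriminants, the paper shows something stronger and more concrete: for each relevant index $i$ one can vary the ambient subspace in $\Fdot^i$ so that $G^{1i}_3$ stays \emph{fixed} while $G^{2i}_3$ sweeps out a given pencil. This requires a separate argument in each of the three cases $i\geq 5$, $i\in\{3,4\}$, $i\in\{1,2\}$, using the explicit formulas $G^{ai}_3=(h_a+F^i_{c})\cap W$ from the proof of Theorem~\ref{T:First_Family} together with the linear independence $V=h_1\oplus h_2$. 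Once one fibre instance is literally constant while the other moves, Remark~\ref{R:Galois_Comment} yields the transposition with no need to compare two moving discriminants.

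A second point you leave implicit: your pencil must keep the auxiliary instance $\calEdot$ (and hence $h_1,h_2$) fixed, or the resulting monodromy does not land in $\Gamma$. The paper handles this by working over the subfamily $\calY\subset(\Fl_9)^s$ in which $F^1_2,F^2_2,F^3_7,F^4_7$ are frozen. For your argument to go through you must at minimum vary only the subspace $F^j_5$, $F^j_4$, or $F^j_6$ (according to the case) while holding fixed the subspaces of $\Fdot^j$ that enter $\calEdot$; you should make this explicit.
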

%%%%%%%%%%%%%%%%%%%%%%%%%%%%%%%%%%%%%%%%%%%%%%%%%%%%%%%%%%%%%%%%%%%%%%%%%%%%%%%%%

We prove this after giving a proof of Theorem~\ref{T:First_Family}.

%%%%%%%%%%%%%%%%%%%%%%%%%%%%%%%%%%%%%%%%%%%%%%%%%%%%%%%%%%%%%%%%%%%%%%%%%%%%%%%%%
\begin{remark}
 A (partial) flag $\Fdot$ in $\CC^n$ is a nested sequence of subspaces, where not all  dimensions need to occur.
 Equivalently, a flag is an increasing filtration of $\CC^n$ by linear subspaces.
 If all possible dimensions occur in $\Fdot$, it is \demph{complete}, as in~\eqref{Eq:FullFlag}.
 For any subspace $h\subset\CC^n$, the sequence of subspaces $h{+}F_i$
 for $F_i\in\Fdot$ is an increasing filtration and thus forms another flag
 \defcolor{$h{+}\Fdot$} in $\CC^n$ with smallest subspace $h$.
 If $V\subset\CC^n$ is a subspace, then the subspaces $V\cap F_i$ form a flag \defcolor{$V\cap\Fdot$} in $V$.
 If $\phi\colon\CC^n\twoheadrightarrow V$ is surjective, then the image of $\Fdot$ is a flag in $V$.
 In the constructions of these filtrations (flags), we may have coincidences; $h+F_i=h+F_j$ or $V\cap F_i=V\cap F_j$ or
 $\phi(F_i)=\phi(F_j)$ with $i\neq j$. 
 Many of  our arguments require that we determine the description of particular elements of the flags resulting from these constructions. 
 \hfill{$\diamond$}
%
%   This may be generalized further
%
\end{remark}
%%%%%%%%%%%%%%%%%%%%%%%%%%%%%%%%%%%%%%%%%%%%%%%%%%%%%%%%%%%%%%%%%%%%%%%%%%%%%%%%%

% \renewcommand{\old}[1]{}
%%%%%%%%%%%%%%%%%%%%%%%%%%%%%%%%%%%%%%%%%%%%%%%%%%%%%%%%%%%%%%%%%%%%%%%%%%%%%%%%%
\begin{proof}[Proof of Theorem~\ref{T:First_Family}]

 We first show that $\blambda$ is a Schubert problem.
   As $\bnu$ is a Schubert problem in $\Gr(a,b)$, we have $a(b{-}a)=\sum_i|\nu^i|$.
 Then $\sum_i|\lambda^i|$ equals  
\begin{multline*}
   (b{-}a+1)+|\nu^1|+(b{-}a+1)+|\nu^2|+ (a{+}1)+|\nu^3|+ (a{+}1)+|\nu^4| +\sum_{i=5}^s|\nu^i|\\
  \ =\ 2(b{-}a+1)+2(a{+}1)+ a(b{-}a)\ =\ (a{+}2)(b{+}4{-}(a{+}2))\,,
\end{multline*}
 so that $\blambda$ is a Schubert problem in $\Gr(2{+}a,4{+}b)$.

 Let $\calFdot:=(\Fdot^1,\dotsc,\Fdot^s)$ be general flags in $\CC^{4+b}$ giving an instance of $\blambda$
 with solutions $\Omega_\blambda\calFdot$.
 We will often use that the flags in $\calFdot$ are in linear general position
 when making assertions about dimensions of intersections and spans, and leave the verification to the reader.
  Generality also implies that the dimension inequalities~\eqref{Eq:SchubertCondition} that define the Schubert
   varieties hold with equality.
 We establish the four parts of Definition~\ref{D:fibration} for $\bmu=\I^4$ in separate headings below.\medskip

 {\bf Part (1).}
   Set  $\defcolor{V} := F^1_2+ F^2_2$.
   As the flags are general, this is a direct sum and $V\simeq\CC^4$.
   Define $\calEdot$ by $\Edot^i:=\Fdot^i\cap V$ for $i=1,\dotsc,4$.
   Fixing $F^1_2$ and $F^2_2$ and thus $V$, but otherwise letting $\calFdot$ vary, every possible collection of complete
   flags $\calEdot$ in $V$ can occur, which implies the first half of part (4) in Definition~\ref{D:fibration}.

   Let $H\in\Omega_\blambda\calFdot$.
   Since $H\in\Omega_{(b{-}a{+}1, \nu^i)}\Fdot^i$ for $i=1,2$, condition~\eqref{Eq:SchubertCondition} for $j=1$ is 
   \[
     \dim (H\cap F^i_{4+b-(2+a)+1-(b{-}a{+}1)})\ =\ 1\,.
   \]
   As $2=4+b-(2+a)+1-(b{-}a{+}1)$, these subspaces are $F^1_2$ and $F^2_2$, so that $\defcolor{h}:=H\cap V$ is
   two-dimensional.
   Thus $h\in\Gr(2,V)$ and it satisfies $h\in\Omega_{\sI}\Edot^i$, for $i=1,2$.

   Now let $i\in\{3,4\}$.
   Since the $(a{+}1)$st part of $1^{a+1}+\nu^i$ is $1$ and $H\in\Omega_{1^{a+1}+\nu^i}\Fdot^i$,
   condition~\eqref{Eq:SchubertCondition} for $j=a{+}1$ is $\dim (H\cap F^i_{4+b-(2+a)+a+1-1})= a+1$.
   Furthermore, since $b{+}2=4{+}b{-}(2{+}a){+}a{+}1{-}1$, this becomes $\dim (H\cap F^i_{b+2})=a{+}1$.
   As $\dim(H)=a{+}2$, this intersection has codimension 1 in $H$, which implies that $\dim (h\cap F^i_{b+2})=1$.
   Because $V$ has codimension $b$ and $\Fdot^i$ is general, we have $\dim (V\cap F^i_{b+2})=2$ and thus $E^i_2= V\cap F^i_{b+2}$.
   Hence $h\in\Omega_{\sI}\Edot^i$.
   This shows that $h\in\Omega_{\sI^4}\calEdot$,  establishing part (1) of Definition~\ref{D:fibration}.\medskip

  {\bf Parts (2) and (3).}
   Define $\defcolor{W}:=F^3_{b+2}\cap F^4_{b+2}$.
   As the flags are in general position, $W\simeq\CC^b$ and $V{+}W=\CC^{4+b}$ is in direct sum.
   For $h\in\Omega_{\sI^4}\calEdot$ and for $i=1,\dotsc,s$ define $\defcolor{\Gdot^i(h)}:=W\cap(h{+}\Fdot^i)$, the (complete)
   flag in $W$ induced by $h{+}\Fdot^i$.
   Then the flags $\defcolor{\calGdot(h)}=(\Gdot^1(h),\dotsc,\Gdot^s(h))$ give an instance of the Schubert problem $\bnu$ in
   $\Gr(a,W)$.

   Let  $H\in\Omega_{\blambda}\calFdot$.
   Since $\dim (H\cap F^i_{b+2})=a{+}1$ for $i=3,4$ and $\dim (H)=a{+}2$, we have $\dim( H\cap W)=a$.
   Set $\defcolor{K}:= H\cap W\in\Gr(a,W)$.
   Then $H = h+K$ is in direct sum, where $h=H\cap V$.
   To show that part (2) of Definition~\ref{D:fibration} holds, we must show that $K\in\Omega_{\bnu}\calGdot(h)$.

 It is convenient to prove parts (2) and (3) together by showing that for each $i=1,\dotsc,s$, 
 \begin{equation}\label{Eq:equiv}
   K\ \in\ \Omega_{\nu^i}\Gdot^i(h)\ 
   \Longleftrightarrow\ h\oplus K\in\Omega_{\lambda^i}\Fdot^i\,.
 \end{equation}
 That is, $K$ satisfies the Schubert conditions~\eqref{Eq:SchubertCondition} for $\nu^i$ and $\Gdot^i(h)$ if and only if
 $h\oplus K$ satisfies the conditions~\eqref{Eq:SchubertCondition} for $\lambda^i$ and $\Fdot^i$.
 This involves careful counting and some linear algebra, using that the flags $\calFdot$ are in general position.
 As $h$ is fixed, let us write \defcolor{$\Gdot^i$} for $\Gdot^i(h)$.
 We will prove the assertion~\eqref{Eq:equiv} in three separate cases, $i\geq 5$, $i\in\{3,4\}$, and $i\in\{1,2\}$.\medskip

  Suppose that $i\geq 5$ and let $1\leq c\leq b$.
  Since $\Fdot^i$ is in general position with respect to the 2-plane $h$, we have $h\cap F^i_{c+2}=\{0\}$.
  Thus, $h{+}F^i_{c+2}$ is a direct sum and we have $\dim(h\oplus F^i_{c+2})=c{+}4$.
  Since $W=F^3_{b+2}\cap F^4_{b+2}$ has codimension 4 and is in general position with respect to $h$ and to $F^i_{c+2}$, we
  have $\dim(h\oplus F^i_{c+2})\cap W=c$.
  Hence $G^i_c=(h\oplus F^i_{c+2})\cap W$.
  We claim that if $K\in\Gr(a,W)$, then
  \begin{equation}\label{Eq:Claim_1}
     \dim (K\cap G^i_c)\ =\ \dim((h\oplus K)\cap F^i_{c+2})\,.
  \end{equation}
  To see this, set $d:=\dim((h\oplus K)\cap (h\oplus F^i_{c+2}))$, and observe that $h$ lies in the intersection.
  As $F^i_{c+2}$ has codimension 2 in $h\oplus F^i_{c+2}$,
  and $h$ is in direct sum with $(h\oplus K)\cap F^i_{c+2}$, which forces the inequality $d{-}2\leq\dim((h\oplus K)\cap F^i_{c+2})$.
  This implies that $d{-}2=\dim((h\oplus K)\cap F^i_{c+2})$.
  Similarly, as $K$ has codimension 2 in $h\oplus K$, we have $d{-}2=\dim ( K\cap(h+ F^i_{c+2}))$.
  Since $K\subset W$ and $G^i_c=W\cap(h\oplus F^i_{c+2})$, we have $K\cap G^i_c= K\cap(h+ F^i_{c+2})$,
  which establishes~\eqref{Eq:Claim_1}.

  To verify~\eqref{Eq:equiv}, let $j\leq a$.
  By letting $c=b{-}a{+}j{-}\nu^i_j$ in~\eqref{Eq:Claim_1}, we have 
\[
  \dim (K\cap G^i_{b-a+j-\nu^i_j})\ =\ \dim( (h\oplus K)\cap F^i_{b-a+j-\nu^i_j+2})\,.
\]
 As $\nu^i=\lambda^i$ and $b-a+j-\nu^i_j+2= (b{+}4)-(a{+}2) +j-\lambda^i_j$,
 we see that~\eqref{Eq:equiv} holds, by the Schubert
 conditions~\eqref{Eq:SchubertCondition}.\medskip

 Suppose now that $i\in\{3,4\}$.
 Let us investigate the flag $\Gdot^3$ (the same argument holds for $\Gdot^4$).
 Observe first that $\dim (h\cap F^3_{b+2})=1$, as $E^3_2=V\cap F^3_{b+2}$.
 Suppose that  $1\leq c\leq b$.
 Then $\dim ((h+F^3_{c+1})\cap F^3_{b+2})=c{+}2$, and thus  $\dim( (h+F^3_{c+1})\cap W)=c$, as  $W=F^3_{b+2}\cap F^4_{b+2}$ and
 the flags are general given that $h$ meets $F^3_{b+2}$ and $F^4_{b+2}$.
 Thus $G^3_c=(h+F^3_{c+1})\cap W$.
 Let $K\in\Gr(a,W)$.
 Then essentially the same arguments as for~\eqref{Eq:Claim_1} show that 
  \begin{equation}\label{Eq:Claim_2}
    \dim (K\cap G^3_c)\ =\ \dim((h\oplus K)\cap F^3_{c+1})\,.
  \end{equation}

   To verify~\eqref{Eq:equiv}, let $j\leq a$.
 Then~\eqref{Eq:Claim_2} with $c=b{-}a{+}j{-}\nu^3_j$ implies that 
\[
  \dim (K\cap G^3_{b-a+j-\nu^3_j})\ =\ \dim( (h\oplus K)\cap F^3_{b-a+j-\nu^3_j+1})\,.
\]
 Since $\lambda^3_j=1+\nu^3_j$ and $b-a+j-\nu^3_j+1= (4{+}b)-(2{+}a) +j-\lambda^3_j$, we see
 that~\eqref{Eq:SchubertCondition} holds for $j\leq a$. 
 For $j=a{+}1$, we have $\lambda^3_{a+1}=1$.
 Note that $\dim((h\oplus K)\cap F^3_{b+2})=a+1$ as 
 $K\subset W\subset F^3_{b+2}$ and $\dim (h\cap E^3_2)=1$, which shows that~\eqref{Eq:equiv} holds.\smallskip

 Suppose now that $i\in\{1,2\}$.
 As before, it suffices to show this for $i=1$.
 Since $E^1_2=F^1_2\subset V$, we have $1=\dim (h\cap E^1_2)=\dim ((h\oplus K)\cap F^1_2)$.
 Let $1\leq c\leq b$.
 Since $\dim (h\cap F^1_2)=1$ and $h\not\subset F^1_{c+3}$, we have $\dim(h+F^1_{c+3})=c{+}4$, and so $\dim((h+F^1_{c+3})\cap W)=c$.
 This uses again the general position of $\Fdot^1$ with $\Fdot^3$ and $\Fdot^4$.
 Thus $G^1_c=(h+F^1_{c+3})\cap W$.
 We claim that if $K\in\Gr(a,W)$, then
   \begin{equation}\label{Eq:Claim_3}
     1+\dim (K\cap G^1_c)\ =\ \dim((h\oplus K)\cap F^1_{c+3})\,.
   \end{equation}
   To see this, let $\defcolor{d}:=\dim( (h\oplus K)\cap(h+F^1_{c+3}))$. 
 Since $F^1_{c+3}$ has codimension 1 in $h+F^1_{c+3}$ and $h\not\subset F^1_{c+3}$, we have that 
 $d{-}1=\dim((h\oplus K)\cap F^1_{c+3})$.
  Similarly, as $K$ has codimension 2 in $h\oplus K$, we have that 
  $d{-}2=\dim (K\cap(h+F^1_{c+3}))$.
  Recalling that $K\cap G^1_c=K\cap(h+F^1_{c+3})$ establishes~\eqref{Eq:Claim_3}.

 We already observed that $h\oplus K$ satisfies  $\dim ((h\oplus K)\cap F^1_2)=1$, which is one of the
 conditions~\eqref{Eq:SchubertCondition} for $\Omega_{\lambda^1}\Fdot^1$.
 For the others, note that for $j=1,\dotsc,a$, $\nu^1_j=\lambda^1_{j+1}$.
 Then using~\eqref{Eq:Claim_3} for $c=b{-}a{+}j{-}\nu^1_j$,
\[
  1+\dim (K\cap G^1_{b-a+j-\nu^1_j})\ =\ \dim( (h\oplus K)\cap F^1_{b-a+j-\nu^1_j+3})\,,
\]
and $b-a+j-\nu^1_j+3= (b{+}4)-(a{+}2) +(j+1)-\lambda^1_{j+1}$.
This completes the proof of parts (2) and (3) in Definition~\ref{D:fibration}.\medskip

{\bf Part (4).}
 We have already established (4) for the flags $\calEdot$.
 Observe that for a fixed $h\in\Omega_{\sI^4}\calEdot$, every
 flag $\Gdot$ in $W$ occurs as $W\cap(h+\Fdot)$, for some flag $\Fdot\in\Fl_{4+b}$.
 Thus every collection of flags in $W$  occurs as $\calGdot(h)$, for a given $h$.
 This completes the proof. 
\end{proof}
%%%%%%%%%%%%%%%%%%%%%%%%%%%%%%%%%%%%%%%%%%%%%%%%%%%%%%%%%%%%%%%%%%%%%%%%%%%%%%%%%
%\newpage
   
%%%%%%%%%%%%%%%%%%%%%%%%%%%%%%%%%%%%%%%%%%%%%%%%%%%%%%%%%%%%%%%%%%%%%%%%%%%%%%%%%
\begin{proof}[Proof of Corollary~\ref{C:First_Family}]
 Let $\blambda$ be a Schubert problem from Table~\ref{Ta:107} 
 that is fibered over $\I^4$ with fiber $\bnu$ (a nontrivial Schubert problem in $\Gr(2,5)$), and let \defcolor{$\Gamma$} be
 the kernel of the homomorphism $\Gal_{\blambda}\twoheadrightarrow S_2$.
 By Proposition~\ref{P:Two}, $\Gal_{\bnu}\simeq S_{d(\bnu)}$.
 By Lemma~\ref{L:Fibered}, $\Gamma\subset S_{d(\bnu)}\times S_{d(\bnu)}$ is stable under the action of $S_2$ and it projects onto each
 factor.
 Let \defcolor{$e$} denote the identity element of $S_{d(\bnu)}$. 
 We will show that $\Gamma$ contains an element $(e,\sigma)$ with $\sigma$ a transposition.
 Conjugating by elements of $\Gamma$ and by $S_2$ shows that $\Gamma$ contains all elements  $(e,\tau)$ and
 $(\tau,e)$ for $\tau$ a transposition, and thus that $\Gamma= S_{d(\bnu)}\times S_{d(\bnu)}$, which will complete the proof.

 Let $\pi\colon\calX_{\blambda}\to(\Fl_9)^s$ be the branched cover~\eqref{Eq:TotalSpace}, \defcolor{${\mathbb L}$} the Galois closure of the
 extension $\CC(\calX_{\blambda})/\pi^*(\CC((\Fl_9)^s))$, and $\defcolor{{\mathbb K}}:={\mathbb L}^\Gamma$ the fixed field of $\Gamma$.
 Then ${\mathbb K}$ is a quadratic extension (as $\Gal_{\blambda}/\Gamma=S_2$), which we identify.
 Let  $\rho\colon \calZ_{\sI^4}\to(\Fl_9)^s$ be the family of auxiliary problems $\I^4$ constructed as in Theorem~\ref{T:First_Family}.
 (Over a general $\calFdot\in(\Fl_9)^s$, the fiber is $\Omega_{\sI^4}\calEdot\subset\Gr(2,V)$, where $V=F^1_2+F^2_2$
   and $\calEdot=(\Fdot^i\cap V\mid i=1,\dotsc,4)$.)
  Over the locus of flags in $(\Fl_9)^s$ in sufficiently general position, the map $\pi\colon\calX_{\blambda}\to(\Fl_9)^s$
  factors through $\rho$, giving a dominant rational map $p\colon \calX_{\blambda}\dashrightarrow \calZ_{\sI^4}$.
  As $\calX_{\blambda}$ is irreducible, $\calZ_{\sI^4}$ is irreducible.  
  Since $\rho\colon \calZ_{\sI^4}\to(\Fl_9)^s$ has degree 2, its Galois group is $S_2$, and we may identify ${\mathbb K}$ with the
  function field of $\calZ_{\sI^4}$.
  The existence of a rational factorization through an intermediate branched cover is a general fact when the
  Galois group is imprimitive, see~\cite{PirolaSchlesinger} and~\cite[Proposition~1]{SDSS}.
  The construction in Theorem~\ref{T:First_Family} identifies the intermediate branched cover.

 We consider a subfamily of $\pi\colon \calX_{\blambda}\to(\Fl_9)^s$ over which $p$ is a covering space and $\rho$ is trivial. 
 Fix 2-planes $\defcolor{E^1_2},\defcolor{E^2_2}$, and 7-planes $\defcolor{\Lambda^3_7},\defcolor{\Lambda^4_7}$ in linear general
 position in $\CC^9$. 
 Let $\defcolor{\calY}=\defcolor{\calY(E^1_2,E^1_2,\Lambda^3_7,\Lambda^4_7)}\subset(\Fl_9)^s$ be the space of flags $\calFdot$ where
 $F^1_2=E^1_2$, $F^2_2=E^2_2$, $F^3_7=\Lambda^3_7$, and $F^4_7=\Lambda^4_7$.
 Set $\defcolor{V}:=\langle E^1_2,E^2_2\rangle$, $\defcolor{E^3_2}:=V\cap\Lambda^3_7$, and $\defcolor{E^4_2}:=V\cap\Lambda^4_7$.
 Every $\calFdot\in\calY$ gives the same auxiliary Schubert problem $\I^4$ in $\Gr(2,V)$,
 \[
   \Omega_{\sI} E^1_2\, \cap\, 
   \Omega_{\sI} E^2_2\, \cap\, 
   \Omega_{\sI} E^3_2\, \cap\, 
   \Omega_{\sI} E^4_2\,.
 \]
 (This is $\Omega_{\sI^4}\calEdot$ where $\Edot^i$ contains $E^i_2$.) 
 This auxiliary problem has two solutions \defcolor{$h_1$} and \defcolor{$h_2$} with $h_1\oplus h_2=V$.
 The family $\calZ_{\sI^4}|_\calY\to\calY$ is constant with fiber $\{h_1,h_2\}$.
 The fibers $\calX_{\blambda}|_\calY\to\calY$ are solutions to the two instances of $\bnu$ given by flags
 $\calGdot(h_a)$ for $a=1,2$, which are in $\Gr(2,W)$ where $\defcolor{W}:=\Lambda_3\cap\Lambda_4\simeq\CC^5$.
 Consequently, the monodromy group of $\calX_{\blambda}|_\calY\to\calY$ is a subgroup of the group $\Gamma$, as
 $\calZ_{\sI^4}|_\calY\to\calY$ has trivial monodromy $\{e\}\subset S_2$. (Here, $e\in S_2$ is its identity element.)

 In instances of $\bnu$ the conditions $\I$ are imposed by the three-dimensional subspaces of the corresponding flags.
 These are $\defcolor{G^{ai}_3}:=G^i_3(h_a)$ for the appropriate index $i$.
 We will argue that there is a family of flags in $\calY$ so that the
 flags $G^{1i}_3$ (when $a=1$) are constant, but the flags $G^{2i}_3$ (when $a=2$) move and induce a simple
 transposition $\sigma$.

 A flag $M_2\subset M_4\subset W$, where $\dim( M_j)=j$ determines a pencil of $3$-planes in $W$,
 \[
     \PP(M_4/M_2)\ :=\ \{ G_3\in\Gr(3,W)\,\mid\, M_2\subset G_3\subset M_4\}\,.
 \]
 We will show that for every pencil of $3$-planes in $W$, there is a family of flags $\Fdot^i$ (drawn from $\calY$) such
 that $G^{1i}_3$ is constant in that family, but $G^{2i}_3$ moves in the pencil.
 We then invoke arguments from Remark~\ref{R:Galois_Comment} to complete the proof.

 From the proof of Theorem~\ref{T:First_Family}, there are three cases for the  subspace $G^{ai}_3$ in terms of $h_a$
 and the flag $\Fdot^i$,
 \begin{enumerate}
  \item $(h_a+F^i_5 ) \cap W$ for $i\geq 5$,
  \item $(h_a+F^i_4 ) \cap W$ for $i=3,4$, or
  \item $(h_a+F^i_6 ) \cap W$ for $i=1,2$.
 \end{enumerate}
 
 In case (1), the map $v\mapsto (v+ F^i_5 ) \cap W$ is an isomorphism $V\xrightarrow{\,\sim\,}W/(F^i_5 \cap W)$.
 Since $V=h_1\oplus h_2$, we may choose $F^i_5$ so that $G^{1i}_3$ and $G^{2i}_3$ are any two 3-planes in $W$ whose
 intersection is any given 1-dimensional subspace.
 This implies the claim about pencils.

 In case (2), we let $i=3$, as the argument for $i=4$ is similar.
 For $a=1,2$,  $h_a\cap F^3_7=\defcolor{\ell_a}$ is a 1-dimensional subspace with $E^3_2=\ell_1\oplus\ell_2$.
 As $W\subset F^3_7$ and has codimension 2,
  \begin{equation}\label{Eq:Ga3_3}
    G^{a3}_3\ =\ (h_a+F^3_4 ) \cap W\ =\ (h_a+F^3_4)\cap F^3_7 \cap W
    \ =\ (\ell_a+F^3_4)\cap W\,.
  \end{equation}
 Consequently, we have
  \begin{equation}\label{Eq:careful_Flags}
    M_2\ :=\ F^3_4\cap W\ \subset\ G^{13}_3\,,\, G^{23}_3\ \subset\ (E^3_2+F^3_4)\cap W\ =:\ M_4\,.
  \end{equation}
 Thus $ G^{13}_3$ and $G^{23}_3$ are distinct members of the pencil $\PP(M_4/M_2)$. 

Given any flag  $M_2\subset M_4\subset W$,  there is  a flag $\Fdot^3$ from $\calY$ (that is, with $\Lambda^3_7=F^3_7$) 
such that $M_2 =F^3_4\cap W$ and $M_4=(E^3_2+F^3_4)\cap W$.
If we replace  $F^3_4$ by another 4-plane $F'_4\subset F^3_4+\ell_1$ such that $M_2 =F'_4\cap W$, then 
$\ell_1+F'_4=\ell_1+F^3_4$ and thus  $G^{13}_3=(\ell_1+F'_4)\cap W$.
Note that $E^3_2+F'_4=E^3_2+F^3_4$ so that $(E^3_2+F'_4)\cap W= M_4$ and thus $M_2\subset(\ell_2+F'_4)\cap W\subset M_4$.
 Since we may choose $F'_4$ so that $\ell_2+F'_4$ contains almost any given point of $W$,
 there exists a choice of $F'_4$ such that $G^{23}_3\neq (\ell_2+F'_4)\cap W$, and in fact almost all
 $3$-planes in the pencil  $\PP(M_4/M_2)$ (except $G^{13}_3$) may occur in this way.
Thus there are flags $\Fdot^3$  so that~\eqref{Eq:careful_Flags} holds with $G^{a3}_3$ defined as in~\eqref{Eq:Ga3_3}.
  Moreover, $G^{13}_3$ and $G^{23}_3$ may  be any two distinct points in the pencil $\PP(M_4/M_2)$.
 This implies the claim about pencils in case (2).

 We argue case (3) for $i=1$.
 For $a\in\{1,2\}$, let $\defcolor{\ell_a}:=h_a\cap E^2_2$, which is 1-dimensional and have $\ell_1\oplus\ell_2=E^2_2=F^2_2$.
 Since $h_a$ meets $E^1_2=F^1_2\subset F^1_6$, $h_a+F^1_6=\ell_a+F^1_6$ is 7-dimensional, and we have $G^{a1}_3 = (\ell_a+F^1_6) \cap W$.
 We also have the containments
  \begin{equation}\label{Eq:careful_FlagsII}
    M_2\ :=\ F^1_6\cap W\ \subset\ G^{11}_3, G^{21}_3\ \subset\ (E^2_2+F^1_6)\cap W\ =:\ M_4\,,
  \end{equation}
 so that $ G^{11}_3$ and $G^{21}_3$ lie in this pencil determined by $F^1_6$.
 Any flag $\Fdot^1$ chosen from $\calY$ will have $L_1=F^1_2\subset F^1_6$.
 If  $F'_6$  is any 6-plane such that $E^1_2,M_2\subset F'_6\subset \ell_1+F^1_6$,
 then the 3-planes $(\ell_a+F'_6)\cap W$ will lie in the pencil~\eqref{Eq:careful_FlagsII}.
 As in case (2), $G^{11}_3=(\ell_1+F'_6)\cap W$, and all 3-planes except $G^{11}_3$ may occur as
 $(\ell_2+F'_6)\cap W$.
 This implies the claim about pencils in case (3).

 While $\calY$ is not equal to $(\Fl_9)^s$, if we take the union of $\calY(E^1_2,E^2_2,\Lambda^3_7,\Lambda^4_7)$ over all linear
 subspaces $E^1_2,E^2_2$ and $\Lambda^3_7,\Lambda^4_7$ of the given dimensions in linear general position, we obtain a dense open
 subset of  $(\Fl_9)^s$.
 We may thus assume that  $E^1_2,E^2_2$ and $\Lambda^3_7,\Lambda^4_7$ have been chosen so that for a general choice of flags
 $\calFdot\in\calY$ the flags $\calGdot(h_1)$ and $\calGdot(h_2)$ are each general and each $\Omega_{\bnu}\calGdot(h_a)$
 consists of $d(\bnu)$ points.
 
 Let $i\in\{1,\dotsc,s\}$ be an index corresponding to a condition $\I$ in $\bnu$.
  By Remark~\ref{R:Galois_Comment}, if $G^{2i}_3$ moves in a general pencil while the other flags in $\calGdot(h_2)$ remain
 fixed, the pencil of instances contains an instance with a unique double point and thus monodromy in the pencil around
 that instance will induce a simple transposition $\sigma$.
 If this may be done while keeping the other flags $\calGdot(h_1)$ fixed, we obtain the desired element $(e,\sigma)$ in the
 group $\Gamma$.
 This will prove the corollary.

 We explain how this may be accomplished.
 Let us fix all the flags in $\calFdot$, except possibly $\Fdot^i$, and let $U$ be the set of flags $\Fdot$ that may
 replace $\Fdot^i$ in $\calFdot$ such that the new $s$-tuple of flags lies in $\calY$ and the subspace $G^{2i}_3$ does not
 change.
 Replacing $U$ by a dense open subset, we may assume that flags in $U$ induce subspaces $G^{1i}_3$ so that
 $\Omega_{\bnu}\calGdot(h_1)$ consists of $d(\bnu)$ points.
 A general subspace of $G^{2i}_3$ of the appropriate dimension (1 if $i\geq 5$ and 2 if $i<5$) occurs as  $G^{1i}_3\cap G^{2i}_3$ for some
 flag in $U$.   
 Thus, given a general pencil $M_2\subset  G^{2i}_3\subset M_4$ as in Remark~\ref{R:Galois_Comment}, there are flags in
 $U$ with $G^{1i}_3\cap G^{2i}_3\subset M_2$ and $G^{1i}_3\subset M_4$.
 Consequently, monodromy in this pencil will induce a permutation $(e,\sigma)$ for $\sigma$ a simple transposition.
 The arguments above show that almost all subspaces in this pencil may be induced by flags in $\calY$, and thus flags
   in $\calY$ induce a permutation $(e,\sigma)$ with $\sigma$ a simple transposition.
  This completes the proof.
\end{proof}
%%%%%%%%%%%%%%%%%%%%%%%%%%%%%%%%%%%%%%%%%%%%%%%%%%%%%%%%%%%%%%%%%%%%%%%%%%%%%%%%% 

In addition to the five families of fibered Schubert problems in $\Gr(4,9)$ of Example~\ref{Ex:G25} fibered over $\I^4=2$ in $\Gr(2,4)$,  
there are two more families which are fibered over a Schubert problem in $\Gr(2,5)$ and come from a general
construction similar to the construction of Theorem~\ref{T:First_Family}.

%%%%%%%%%%%%%%%%%%%%%%%%%%%%%%%%%%%%%%%%%%%%%%%%%%%%%%%%%%%%%%%%%%%%%%%%%%%%%%%%%
\begin{theorem}\label{Th:Second_Family}
 Suppose that  $0<a<b$ and $\bnu$ is a Schubert problem in $\Gr(a,b)$.
 \begin{enumerate}
 \item[(i)]
   $\blambda= 
     \bigl((b{-}a{+}2,b{-}a{+}1,\nu^1), 1^{a+1}+\nu^2, 1^{a+1}+\nu^3,
       1^{a+1}+\nu^4, \nu^5, \dotsc, \nu^s\bigr)$
    is a Schubert problem in $\Gr(2{+}a,5{+}b)$ fibered over $\raisebox{-5.5pt}{\TI}\cdot\I^3=2$ in $\Gr(2,5)$ with fiber
    $\bnu$.  

   \item[(ii)] $\blambda{=}\bigl((b{-}a{+}2,\nu^1),(b{-}a{+}1, \nu^2), 1^{a+1}{+}\nu^3,
       1^{a+1}{+}\nu^4, 1^{a+1}{+}\nu^5, \nu^6, \dotsc, \nu^s\bigr)$
    is a Schubert problem in $\Gr(2{+}a,5{+}b)$ fibered over $\T\cdot\I^4=3$ in $\Gr(2,5)$ with fiber $\bnu$. 
 \end{enumerate}
\end{theorem}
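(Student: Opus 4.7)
The approach is to adapt the framework of Theorem~\ref{T:First_Family}'s proof, verifying the four clauses of Definition~\ref{D:fibration} for an enlarged subspace $V \subset \CC^{5+b}$ of dimension~$5$. Starting with the routine degree computation $\sum_i |\lambda^i| = (2+a)(3+b-a)$, I would set $V := F^1_5$ in part~(i) and $V := F^1_2 + F^2_3$ in part~(ii). For (i), the conditions at $j = 1, 2$ from $\lambda^1 = (b{-}a{+}2, b{-}a{+}1, \nu^1)$ force $H \cap F^1_2$ to be a line and $H \cap F^1_4$ a 2-plane; since $V \supset F^1_4$, we obtain $h := H \cap V = H \cap F^1_4$ with $h \subset F^1_4$ and $h \cap F^1_2 \neq 0$, which is exactly the $\raisebox{-5.5pt}{\TI}$ aux condition from $\Edot^1$. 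For $i \in \{2, 3, 4\}$, setting $E^i_3 := V \cap F^i_{b+3}$ (dimension $3$), the condition $\dim(H \cap F^i_{b+3}) = a+1$ from the $j = a+1$ corner of $1^{a+1}+\nu^i$ translates to $\dim(h \cap E^i_3) \geq 1$, giving the three aux $\I$ conditions. Part~(ii) is entirely analogous: $\lambda^1$ contributes the $\T$ condition via $F^1_2 \subset V$, $\lambda^2$ yields the first $\I$ via $F^2_3 \subset V$, and the three $\lambda^i = 1^{a+1}+\nu^i$ for $i=3,4,5$ produce the remaining three $\I$ conditions of the aux problem $\T\cdot\I^4$.

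I would then establish Definition~\ref{D:fibration}(2)--(3) by choosing a $b$-dimensional complement $W$ of $V$ and defining flags $\Gdot^i(h)$ in $W$ via $G^i_c := (h + F^i_{c+\mathrm{sh}_i}) \cap W$, where the shift $\mathrm{sh}_i$ depends on the role of $\lambda^i$ in the aux/fiber decomposition. The condition correspondence $\dim(K \cap G^i_c) = \dim(H \cap F^i_{c + \mathrm{sh}_i})$, with $K := H \cap W$, is established by the modular-law argument from Theorem~\ref{T:First_Family}'s proof, decomposing $(h \oplus K) \cap (h + F^i_{c+\mathrm{sh}_i})$ both along the direct sum $H = h \oplus K$ and along the codimension-$2$ inclusion $F^i_{c+\mathrm{sh}_i} \subset h + F^i_{c+\mathrm{sh}_i}$. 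As in Theorem~\ref{T:First_Family}, three different shift values appear, corresponding to the three families of conditions: the generic flags ($i \geq 5$ for (i), $i \geq 6$ for (ii)); the flags contributing $\I$ aux conditions ($i \in \{2,3,4\}$ for (i), $i \in \{3,4,5\}$ for (ii)); and the flag(s) contributing the $\raisebox{-5.5pt}{\TI}$ or $\T$ aux condition ($i = 1$ for (i), $i \in \{1, 2\}$ for (ii)). Finally, clause~(4) is verified by observing that once $V$ is fixed every general $\calEdot$ in $\Gr(2, V)$ arises for a suitable $\calFdot$, and that for each $h$, every general $\calGdot(h)$ in $\Gr(a, W)$ likewise arises.

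The main obstacle is the correct construction of $W$. In Theorem~\ref{T:First_Family} the symmetric choice $W := F^3_{b+2} \cap F^4_{b+2}$ worked cleanly because there were exactly two conditions of the form $1^{a+1}+\nu^i$, yielding codimension~$4$ and hence dimension~$b$. Here each of (i) and (ii) has three such conditions, so the direct analog $\bigcap_i F^i_{b+3}$ has codimension~$6$ and dimension $b-1$, one short of the required $b$. Resolving this appears to demand breaking the symmetry, for example by intersecting with a codimension-$1$ subspace from one of the flags (e.g., replacing one $F^i_{b+3}$ by $F^i_{b+4}$), or by allowing $W = W(h)$ to depend on the aux solution $h$ through the lines $\ell^i := h \cap E^i_3$. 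Once $W$ is chosen, one must verify $V \cap W = 0$ generically and that the three codimension-$2$ Schubert conditions on $H$ force $\dim(H \cap W) = a$, so that $H = h \oplus K$ as required by Definition~\ref{D:fibration}(3), after which the condition-correspondence calculations proceed as in Theorem~\ref{T:First_Family}.
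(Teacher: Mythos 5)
Your proposal matches the paper's proof in its essential structure: the choice of $V$ ($V := F^1_5$ in case (i), $V := \langle F^1_2, F^2_3\rangle$ in case (ii)), the derivation of the auxiliary problem from the $j=1$ (and $j=2$) conditions on $\lambda^1$ (and $\lambda^2$) together with the $j=a{+}1$ corners of the $1^{a+1}{+}\nu^i$, and the strategy of proving the equivalence $H\in\Omega_{\lambda^i}\Fdot^i \Leftrightarrow H/h \in \Omega_{\nu^i}\Gdot^i(h)$ by shifted-index dimension counts as in Theorem~\ref{T:First_Family}.

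You correctly isolate the one genuine wrinkle: with three conditions of type $1^{a+1}{+}\nu^i$, the symmetric intersection $\bigcap_i F^i_{b+3}$ has dimension $b{-}1$, not $b$. The paper resolves this exactly as your second option suggests, by letting the space depend on $h$. Concretely, for $i$ in the relevant range the generality of $\calFdot$ gives $\dim(h\cap F^i_{b+3})=1$, so each $h+F^i_{b+3}$ is a hyperplane containing $h$. The paper sets $U(h) := \bigcap_i (h+F^i_{b+3})$ (dimension $b{+}2$, containing $h$, with $V\cap U(h)=h$) and then $W(h) := U(h)/h \cong \CC^{b+5}/V$, with $\Gdot^i(h) := \bigl((h+\Fdot^i)\cap U(h)\bigr)/h$. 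This is the quotient form of your ``$W$ depends on $h$'' idea; it is cleaner than intersecting with an asymmetric $F^i_{b+4}$ (your first option) because it keeps the three conditions symmetric and makes the canonical identification with $\CC^{b+5}/V$ transparent, which is what Definition~\ref{D:fibration}(2)--(3) actually requires. Your shift values for cases (i) and (ii) are consistent with the paper's (e.g.\ in (i): $c{+}3$ for $i\geq5$, $c{+}2$ for $i\in\{2,3,4\}$, $c{+}5$ for $i=1$, measured inside $U(h)$). So the proposal is essentially correct; committing to the $U(h)/h$ construction closes the one gap you flagged.
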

%%%%%%%%%%%%%%%%%%%%%%%%%%%%%%%%%%%%%%%%%%%%%%%%%%%%%%%%%%%%%%%%%%%%%%%%%%%%%%%%%

For $a=2$ and $b=4$, these constructions give enriched Schubert problems in $\Gr(4,9)$ with fiber $\bnu=\I^4$.
The construction in Theorem~\ref{Th:Second_Family}(i) gives eight enriched problems with four solutions and Galois group  $S_2\wr S_2$, while
the construction in Theorem~\ref{Th:Second_Family}(ii) gives 15 enriched problems with six solutions and Galois group $S_2\wr S_3$.
Below we give one representative from each of these two families of enriched problems,
\[
  \raisebox{-14.5pt}{\includegraphics{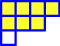}}\cdot
  \raisebox{-14.5pt}{\includegraphics{figures/211}}\cdot
  \raisebox{-14.5pt}{\includegraphics{figures/211}}\cdot
  \raisebox{-14.5pt}{\includegraphics{figures/211}}  \ =\ 4
  \qquad\mbox{and}\qquad
  \raisebox{-7.5pt}{\includegraphics{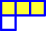}}\cdot
  \raisebox{-7.5pt}{\includegraphics{figures/41}}\cdot
  \raisebox{-14.5pt}{\includegraphics{figures/211}}\cdot
  \raisebox{-14.5pt}{\includegraphics{figures/211}}\cdot
  \raisebox{-14.5pt}{\includegraphics{figures/111}}  \ =\ 6\ .
\]

%%%%%%%%%%%%%%%%%%%%%%%%%%%%%%%%%%%%%%%%%%%%%%%%%%%%%%%%%%%%%%%%%%%%%%%%%%%%%%%%%
\begin{corollary}\label{C:Second_Family}
 The $23$ enriched Schubert problems in $\Gr(4,9)$ from Theorem~$\ref{Th:Second_Family}$ each have Galois group over $\CC$ as claimed.
\end{corollary}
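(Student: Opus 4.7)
The proof plan mirrors that of Corollary~\ref{C:First_Family}. For each of the 23 enriched problems $\blambda$, Theorem~\ref{Th:Second_Family} (applied with $a=2$, $b=4$, $\bnu=\I^4$) exhibits $\blambda$ as fibered over a base problem $\bmu$ in $\Gr(2,5)$ with fiber $\bnu=\I^4$ in $\Gr(2,4)$; in case (i) $\bmu=\raisebox{-5.5pt}{\TI}\cdot\I^3=2$ and in case (ii) $\bmu=\T\cdot\I^4=3$. By Proposition~\ref{P:Two}, both $\Gal_\bmu=S_{d(\bmu)}$ and $\Gal_\bnu=S_2$ are full symmetric. By Lemma~\ref{L:Fibered}, $\Gal_\blambda$ surjects onto $S_{d(\bmu)}$ with kernel $\Gamma\subseteq S_2^{d(\bmu)}$ that is stable under the conjugation action of $S_{d(\bmu)}$ on coordinates and projects surjectively onto each factor.

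It suffices to prove $\Gamma=S_2^{d(\bmu)}$, since then $\Gal_\blambda=S_2\wr S_{d(\bmu)}$. In both cases $d(\bmu)\in\{2,3\}$, any $S_{d(\bmu)}$-stable subgroup of $S_2^{d(\bmu)}$ whose coordinate projections are all surjective either is the full product or misses every element of Hamming weight one (the diagonal $\ZZ/2\ZZ$, or in case (ii) also the index-two parity subgroup). Hence it is enough to exhibit one element of $\Gamma$ of the form $(e,\dots,e,\sigma,e,\dots,e)$ with $\sigma\in S_2$ nontrivial; conjugating by $S_{d(\bmu)}$ then produces every such element, which together generate $S_2^{d(\bmu)}$.

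To produce such an element, fix the ambient linear data (the 2-plane and 3-plane, or 2-plane and four 3-planes, defining the auxiliary instance of $\bmu$ in $\Gr(2,5)$) so that the solutions $h_1,\dots,h_{d(\bmu)}\in\Omega_\bmu\calEdot$ are constant, as was done with $\calY=\calY(L_1,L_2,\Lambda_3,\Lambda_4)$ in the proof of Corollary~\ref{C:First_Family}. Each $H\in\Omega_\blambda\calFdot$ then decomposes as $H=h_a\oplus K$ for some $K$ lying in an instance of $\bnu=\I^4$ in a $4$-plane $W$ determined by $h_a$ and the auxiliary data. Pick an index $i$ corresponding to a condition $\I$ of the fiber $\bnu$, and vary the flag $\Fdot^i$ in a one-parameter family so that (a) for $a\geq 2$, the induced three-dimensional subspace $G^i_3(h_a)\subset W$ of $\Gdot^i(h_a)$ remains constant, while (b) $G^i_3(h_1)$ traces a general pencil of 3-planes in $W$. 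By Proposition~\ref{P:Two} and Remark~\ref{R:Galois_Comment} applied to $\I^4=2$ in $\Gr(2,4)$, such a pencil contains an instance with a unique double solution, and monodromy around it induces a simple transposition $\sigma$ on $\Omega_\bnu\calGdot(h_1)$ while fixing $\Omega_\bnu\calGdot(h_a)$ pointwise for $a\geq 2$. This is the required element $(\sigma,e,\dots,e)\in\Gamma$.

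The main obstacle is the geometric verification of step (a)--(b): one must show that for each condition-$\I$ index $i$ of the fiber, a pencil of flags $\Fdot^i$ can be chosen whose image under the construction of Theorem~\ref{Th:Second_Family} moves $G^i_3(h_1)$ in a general pencil in $W$ while keeping $G^i_3(h_a)$ fixed for $a\geq 2$. This requires a case analysis by the type of each $\lambda^i$ (the three types $i\geq 5$, $i\in\{2,3,4\}$ in (i) and $i\in\{3,4,5\}$ in (ii), and $i=1$), entirely analogous to the three cases in the proof of Corollary~\ref{C:First_Family}, using the explicit description of $\Gdot^i(h)$ from the proof of Theorem~\ref{Th:Second_Family}. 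The key linear algebraic input is that the map $h\mapsto G^i_3(h)$ is linear in the relevant Plücker coordinates with enough independence to separate $h_1$ from $h_2,\dots,h_{d(\bmu)}$; this follows from the same general-position arguments used to establish~\eqref{Eq:Claim_1}, \eqref{Eq:Claim_2}, and~\eqref{Eq:Claim_3}.
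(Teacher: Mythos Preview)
Your overall strategy matches the paper's: restrict to a family $\calY$ over which the auxiliary solutions $h_1,\dotsc,h_{d(\bmu)}$ are fixed, then vary one flag in a pencil to produce a Hamming-weight-one element of $\Gamma$. For case (i), where $\bmu=\raisebox{-5.5pt}{\TI}\cdot\I^3$ has two solutions, the paper proceeds essentially as you outline, noting that $h_1\oplus h_2=F^1_4$ so the independence argument from Corollary~\ref{C:First_Family} carries over (with the additional wrinkle, which you do not mention, that here $W(h_1)\neq W(h_2)$ and one must use the canonical identification $W(h)\simeq\CC^9/V$ to compare flags across fibers).

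The genuine gap is in case (ii). You assert that the verification is ``entirely analogous'' and that the map $h\mapsto G^i(h)$ has ``enough independence to separate $h_1$ from $h_2,\dotsc,h_{d(\bmu)}$,'' claiming this follows from the same general-position arguments as before. It does not. The paper explicitly flags that the three solutions $h_1,h_2,h_3$ to $\T\cdot\I^4$ in $\Gr(2,5)$ are \emph{not} linearly independent (each meets the fixed $2$-plane $E^1_2$, and $V$ is only $5$-dimensional), so the argument of Corollary~\ref{C:First_Family}, which relied on $h_1\oplus h_2=V$, must be modified. The paper instead proves the weaker statement that $h_1,h_2,h_3$ are independent \emph{modulo} $E^1_2$, and uses this only for indices $i\geq 6$: there the map $v\mapsto(v+F^6_5)\cap W(h)$ gives a surjection $V\twoheadrightarrow W(h)$, and one can choose a pencil $F^6_5(t)$ constant on $V'=\langle h_1,h_2\rangle$ but not on $h_3$ (since $h_3\cap V'=\ell_3$ is only $1$-dimensional).

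This handles $14$ of the $15$ problems in case (ii). The exceptional problem $\sThI\cdot\shFI\cdot\raisebox{-2pt}{\shIII}\cdot\raisebox{-2pt}{\scTII}\cdot\raisebox{-2pt}{\scTII}$ has no condition $\nu^i=\I$ with $i\geq 6$, and the paper finds no pencil argument of your type for it. Instead it performs a direct symbolic computation over $\QQ(t)$: fixing $\Fdot^1,\dotsc,\Fdot^4$ and a pencil $\Fdot^5(t)$, it solves the three fiber problems over $\QQ(t)$ and checks that the three discriminants $\delta_i(t)$ are pairwise coprime, so that local monodromy around a root of $\delta_i$ gives the generator of the $i$th $S_2$ factor alone. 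Your proposal does not anticipate this obstruction or this resolution. (A minor point: since the fiber is $\I^4$ in $\Gr(2,4)$, the relevant subspaces are $2$-planes $G^i_2(h_a)$, not the $3$-planes you wrote.)
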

%%%%%%%%%%%%%%%%%%%%%%%%%%%%%%%%%%%%%%%%%%%%%%%%%%%%%%%%%%%%%%%%%%%%%%%%%%%%%%%%%

%%%%%%%%%%%%%%%%%%%%%%%%%%%%%%%%%%%%%%%%%%%%%%%%%%%%%%%%%%%%%%%%%%%%%%%%%%%%%%%%%
\begin{proof}[Proof of Theorem~\ref{Th:Second_Family}]
 The proof is similar to that of Theorem~\ref{T:First_Family}.

 For (i), let $\calFdot\in(\Fl_{5{+}b})^s$ be general flags and suppose that $H\in\Omega_{\lambda^i}\Fdot^i$ for $i=1,\dotsc,4$.
 Genericity of $\calFdot$ and the Schubert condition~\eqref{Eq:SchubertCondition} for $i=1$ imply that
 $\dim (H\cap F_5^1)=\dim( H\cap F_4^1)=2$, as well as $\dim (H\cap F_2^1)=1$.
 Set $\defcolor{V}:=F_5^1$ and $h:=H\cap V\in\Gr(2,V)$ and for $i\in\{1,\dotsc,4\}$, let $\defcolor{\Edot^i}:=\Fdot^i\cap V$.
 Since $\Edot^1=F^1_1\subset\dotsb\subset F^1_5$, we have $h\subset E^1_4$ and $\dim (h\cap E^1_2)=1$, so that 
 $h\in \Omega_{\sTI}\Edot^1$.
 The conditions for $i=2,3,4$ imply that $\dim (H\cap F^i_{b+3})=a{+}1$.
 As $V$ has codimension $b$, $E^i_3=F^i_{b+3}\cap V$ and $\dim (H\cap V)=2$, this implies that $\dim (h\cap E^i_{3})=1$, 
 so that $h\in\Omega_{\sI}\Edot^i$.
 Thus $h\in\Omega_{\bmu}\calEdot$, where $\bmu=\raisebox{-5.5pt}{\TI}\cdot\I^3$.
 This establishes part (1) in the Definition~\ref{D:fibration} of a fibration.
 Since every possible collection of  flags $\calEdot$ in $V$ can occur, this also establishes the second half of
 part (4) in Definition~\ref{D:fibration}.

 We establish parts (2), (3), and the rest of (4).
 Let $h\in \Omega_{\bmu}\calEdot$.
 For $i\in\{2,3,4\}$ we have that $\dim(h+F^i_{b+3})=b+4$ as $1=\dim (h\cap E^i_3)=\dim( h\cap F^i_{b+3})$.
 Let $\defcolor{U(h)}$ be the intersection of the three hyperplanes $h+F^i_{b+3}$ for $i=2,3,4$.
 The general position of $\Fdot^i$ implies that $h=V\cap U(h)$, and so
 $\defcolor{W(h)}:=U(h)/h$ is canonically identified with $W=\CC^{b+5}/V$.

 The construction of $U(h)$ and $W$ 
 is explained by noting that if $H\in\Omega_{\blambda}\calFdot$ and $h=H\cap V$, then for each $i\in\{2,3,4\}$ %as 
 we have $\dim(H\cap F^i_{b+3})=a+1$, and thus $H\subset h+F^i_{b+3}$.
 Consequently, $H/h\in\Gr(a,W(h))$.
 For each $i\in\{1,\dotsc,s\}$, let us define
 \begin{equation}\label{Eq:G(h)}
   \defcolor{\Gdot^i(h)}\ :=\  \bigl( (h+\Fdot^i)\cap U(h)\bigr)/h\,.
 \end{equation}
 Any flag in $W(h)$ arises in this way, which establishes part (4) of Definition~\ref{D:fibration}. 
 For parts (2) and (3), we show that for all $H\in\Gr(a{+}2,b{+}5)$ with $h\subset H$ and $i\in\{1,\dotsc,s\}$
 \begin{equation}\label{Eq:needed_implications}
    H\ \in\ \Omega_{\lambda^i}\Fdot^i\ \Longleftrightarrow\ H/h\ \in\ \Omega_{\nu^i}\Gdot^i(h)\,.
 \end{equation}

 Let $i\geq 5$ and $1\leq c\leq b$.
 Then $\dim(h+F^i_{c+3})=c+5$, by the generality of $\Fdot$.
 As $U(h)$ has codimension 3 and $h\subset U(h)$, we have $G^i_c(h)=((h+F^i_{c+3})\cap U(h))/h$.
 Then for $j\in\{1,\dotsc,a\}$,
 \begin{eqnarray*}
    2+\dim\bigl(H/h\cap G^i_{b-a+j-\nu^i_j}(h)\bigr)&=&
    \dim(H\cap(h+ F^i_{b-a+j-\nu^i_j+3}))\\
    &=&    2+  \dim(H\cap F^i_{(b+5)-(a+2)+j-\nu^i_j})\,.
 \end{eqnarray*}
  Since $\lambda^i=\nu^i$ for $i\geq 5$, this shows~\eqref{Eq:needed_implications} for this case.
 
 For $i\in\{2,3,4\}$ it suffices to argue for $i=2$.
 The generality of $\Fdot^2$ implies that $h\cap F^2_{b+2}=\{0\}$ (but $\dim (h\cap F^2_{b+3})=1$).
 Let $1\leq c\leq b$.
 Then $\dim(h+F^2_{c+2})=c+4$.
 As $h+F^3_{b+3}$ and $h+F^4_{b+3}$ are hyperplanes, but $h+F^2_{c+2}\subset h+F^2_{b+3}$, we have that
 $\dim( (h+F^2_{c+2})\cap U(h))=c+2$, so that 
 $G^i_c(h)=((h+F^i_{c+2})\cap U(h))/h$.

 Let $H\in\Omega_{\lambda^2}\Fdot^2$ with $H\cap V=h$.
 Since for $j\in\{1,\dotsc,a\}$, $\lambda^2_j=1+\nu^2_j$, from~\eqref{Eq:SchubertCondition} we have
 \[
 j\ =\  \dim(H\cap F^2_{(b+5)-(a+2)+j-\lambda^2_j})\ =\ 
 \dim(H\cap F^2_{b-a+j-\nu^2_j+2}) \,.
 \]
 Thus $2+j=\dim(H\cap (h+F^2_{b-a+j-\nu^2_j+2}))$, and so
 $j=\dim (H/h \cap G^2_{b-a+j-\nu^2_j}(h))$, so that 
 $H/h\in\Omega_{\nu^2}\Gdot^2(h)$.
 These assertions about dimension are reversible, which proves~\eqref{Eq:needed_implications} for $i\in\{2,3,4\}$.

 Let $i=1$ and $1\leq c\leq b$.
 As $h\subset F^1_4\subset F^1_{c+5}$ and $\Fdot^2,\Fdot^3$ and $\Fdot^4$ are in general position with respect to
 $\Fdot^1$, $\dim((h+F^1_{c+5})\cap U(h))=c+2$ and thus  $G^1_c(h)=((h+F^i_{c+5})\cap U(h))/h$.
 Then we have that $\dim(H\cap F^1_{c+5})=2+\dim(H/h\cap G^1_c(h))$.
 Let $1\leq j\leq a$.
 As $\nu^1_j=\lambda^1_{j+2}$, we have
 \[
   j+2\ =\ \dim(H\cap F^1_{(b+5)-(a+2)+(j+2)-\lambda^1_{j+2}})
   \ =\ \dim(H\cap F^1_{b-a+j-\nu^1_{j}+5})\,,
 \]
 and thus $j=\dim (H/h\cap G^1_{b-a+j-\nu^1_{j}})$, which shows~\eqref{Eq:needed_implications} and completes the proof of
 parts (2) and (3) of Definition~\ref{D:fibration}, and thus establishes statement (i).\medskip

 For the statement (ii),  let $\calFdot\in(\Fl_{5{+}b})^s$ be general flags and  suppose that $H\in\Omega_{\lambda^i}\Fdot^i$ for
 $i\in\{1,\dotsc,5\}$.
 Then $\dim (H\cap F^1_2)=\dim (H\cap F^2_3)=1$.
 If $\defcolor{V}:=\langle F^1_2,F^2_3\rangle\simeq \CC^5$ then $\defcolor{h}:=H\cap V\in\Gr(2,V)$.
 If $\bmu=(\T,\I^4)$ and $\defcolor{\Edot^i}:=\Fdot^i\cap V$ for $i\in\{1,\dotsc,5\}$, then similar arguments as
 before show that $h\in\Omega_{\bmu}\calEdot$, and establish part (1) and the first half of part (4) of
 Definition~\ref{D:fibration}. 

 For parts (2), (3), and (4), we first identify $W(h)$ and the flags $\calGdot(h)$.
 For $h\in\Omega_{\bmu}\calEdot$, we define $\defcolor{U(h)}\simeq\CC^{b+2}$ to be the intersection of $h+F^i_{b+3}$ for
 $i\in\{3,4,5\}$.
 We set $\defcolor{W(h)}:=U(h)/h$ and then set $\defcolor{\Gdot^i(h)}:=((h+\Fdot^i)\cap U(h))/h$ for $i\in\{1,\dotsc,s\}$.
 As before, all flags in $W(h)$ may occur in this way, which proves part (4) of Definition~\ref{D:fibration}.
 Arguments using linear algebra, the Schubert conditions that $h$ satisfies, and the generality of the flags $\calFdot$
 show that for $1\leq c\leq b$ we have
 \begin{enumerate}
  \item $G^i_c(h)=((h+F^i_{c+3})\cap U(h))/h$ for $i\geq 6$,
  \item $G^i_c(h)=((h+F^i_{c+2})\cap U(h))/h$ for $i\in\{3,4,5\}$, and 
  \item $G^i_c(h)=((h+F^i_{c+4})\cap U(h))/h$ for $i\in\{1,2\}$.
 \end{enumerate}
 (These arguments are similar to those in (i) and in the proof of Theorem~\ref{T:First_Family}.)
 As before, from this it may be deduced that~\eqref{Eq:needed_implications} holds,  which establishes parts (2) and
 (3), and completes the proof.
\end{proof}
%%%%%%%%%%%%%%%%%%%%%%%%%%%%%%%%%%%%%%%%%%%%%%%%%%%%%%%%%%%%%%%%%%%%%%%%%%%%%%%%%

Theorems~\ref{T:First_Family} and~\ref{Th:Second_Family} are Theorems 3.8, 3.9, and 3.10 of the thesis~\cite{Williams}.

%%%%%%%%%%%%%%%%%%%%%%%%%%%%%%%%%%%%%%%%%%%%%%%%%%%%%%%%%%%%%%%%%%%%%%%%%%%%%%%%%
\begin{proof}[Proof of Corollary~\ref{C:Second_Family}]
  We build on ideas from the proof of Corollary~\ref{C:First_Family}, following its outline.
  In each case, we start with a subset $\calY\subset(\Fl_9)^s$ of flags for which the auxiliary Schubert problem $\bmu$ in $\Gr(2,V)$ is
  constant with solutions $\Omega_{\bmu}\calEdot$. 
  That is, some subspaces in some of the flags are fixed to induce a constant Schubert problem, while the others may be chosen freely
  given those constraints.
  The Schubert problem of the fiber above each $h\in\Omega_{\bmu}\calEdot$ is the problem $\I^4$ in $\Gr(2,W(h))$.
  We produce a subfamily $\calY'\subset\calY$ of instances that depends upon the choice of a general pencil and which
  induces a constant Schubert problem for all $h$
  except one, and for that one, three of the conditions are fixed while the fourth moves in a pencil.
  By the discussion following Figure~\ref{F:4lines} and similar to arguments given for Corollary~\ref{C:First_Family}, this
  implies that the Galois group is as claimed.

  An instance $\calFdot$ of a Schubert problem $\blambda$ in $\Gr(4,9)$ in Theorem~\ref{Th:Second_Family}(i) is fibered over
  $\raisebox{-5.5pt}{\TI}\cdot\I^3$ in $\Gr(2,F^1_5)$.
  Setting $V=F^1_4$, this reduces to $\I^4$ in $\Gr(2,V)$, given by the four 2-planes $E^1_2=F^1_2$ and $E^i_2:=F^i_7\cap V$
  for $i\in\{2,3,4\}$. 
  Fix $V\simeq\CC^4$ and $\CC^2\simeq E^i_2$ for $i\in\{1,\dotsc,4\}$ in general position in $V$.
  Let $\calY\subset(\Fl_9)^s$ be all flags $\calFdot$ such that  $F^1_2=E^1_2$, $F^1_4=V$,  and for  $i\in\{2,3,4\}$,
  $E^i_2:=F^i_7\cap V$.
  Then every flag in $\calY$ induces the same auxiliary problem $\Omega_{\sI^4}\calEdot$  in $\Gr(2,V)$,
  where $\Edot^i=\Fdot^i\cap V$ for $i\in\{1,\dotsc,4\}$.
  For a solution $h$ to  $\Omega_{\sI^4}\calEdot$, we define
  \[
    \defcolor{U(h)}\ :=\ (h+F^2_7)\cap (h+F^3_7)\cap (h+F^4_7)\,,
  \]
  which is a 6-plane that contains $h$, so that $\defcolor{W(h)}:=U(h)/h$ is 4-dimensional.
  Define the flags $\defcolor{\Gdot^i(h)}:= \bigl( (h+\Fdot^i)\cap U(h)\bigr)/h$, for $i\in\{1,\dotsc,s\}$.

    As the solutions $h_1$ and $h_2$ to  $\Omega_{\sI^4}\calEdot$ are independent, $V=h_1\oplus h_2$.
  This independence and the descriptions of the flags $\Gdot^i(h_a)$ enable arguments similar to those in the proof of
  Corollary~\ref{C:First_Family} 
  which show the existence of a subfamily $\calY'\subset\calY$ of instances such that the flag
  $\calGdot(h_1)$ is constant while $\calGdot(h_2)$ moves in a general pencil and hence induces a transposition.
 % While $W(h_1)\neq W(h_2)$, as explained in the proof of Theorem~\ref{Th:Second_Family}(i), both are canonically
 % isomorphic to $\CC^9/F^1_5$, which enables the comparison of $G^{1i}_3$ to $G^{2i}_3$ as in the proof of
 % Corollary~\ref{C:First_Family}.    

 Now let $\blambda$ be a Schubert problem in Theorem~\ref{Th:Second_Family}(ii) fibered over $\T\cdot\I^4$ with fiber
 $\I^4$.
 As the three solutions to $\T\cdot\I^4$ in $\Gr(2,5)$ are not linearly independent, we must modify our arguments.
 Let $F^1_2, F^2_3, F^3_7, F^4_7$, and $F^5_7$ be general linear subspaces of the indicated dimensions, and let
 $\calY\subset(\Fl_9)^s$ be the set of flags $\calFdot$ that are general given that those subspaces are
 fixed.
 Then if $\defcolor{V}:=\langle F^1_2, F^2_3\rangle$ and if we set $\defcolor{E^1_2}:=F^1_2$, $\defcolor{E^2_3}:=F^2_3$, and
 $\defcolor{E^i_3}:=F^i_7\cap V$ for $i\in\{3,4,5\}$, then we have the auxiliary problem
 \[
   \Omega_{\sT}E^1_2\,\cap\,
   \Omega_{\sI}E^2_3\,\cap\,
   \Omega_{\sI}E^3_3\,\cap\,
   \Omega_{\sI}E^4_3\,\cap\,
   \Omega_{\sI}E^5_3\,,
 \]
 which is constant for $\calFdot\in\calY$ and has three solutions $h_1,h_2,h_3$.
 For each $i\in\{1,2,3\}$, let $\defcolor{\ell_i}:= h_i\cap E^1_2$ and $\defcolor{m_i}:= h_i\cap E^2_3$, both of which are 1-dimensional.
 Observe that the span $M$ of $m_1,m_2,m_3$ is $E^2_3$, for otherwise $h_i\in\Omega_{\sT}M$, which contradicts the
 generality of $\calFdot$.
 Consequently,  the three solutions $h_1,h_2,h_3$ are linearly independent modulo $E^1_2$. 
 We will construct $\calY'\subset\calY$ as desired by fixing all subspaces in the flags in a given general
 $\calFdot\in\calY$ except one.
 We use the notation from the proof of Theorem~\ref{Th:Second_Family}.
 
 Suppose first that $\nu^i=\I$ for some $i\geq 6$.
 We may assume that $\nu^6=\I$, so that $\lambda^6=\I$, and for $h\in\{h_1,h_2,h_3\}$,
 $G^6_2(h)=(h+F^6_5)\cap W(h)$.
 The map $V\ni v\mapsto (v+F^6_5)\cap W(h)$ is a linear surjection from $V$ to $W(h)$ that depends upon $F^6_5$.
 In fact all surjections $V\twoheadrightarrow W(h)$ occur if we replace $F^6_5$ by other five-dimensional subspaces.
 Set $V':=\langle h_1,h_2\rangle\simeq\CC^4$.
 Then $\ell_3=h_3\cap V'$ as $E^1_2\subset V'$.
 Let $F^6_5(t)$ vary in a pencil so that the resulting maps to the $W(h_i)$ are constant on $V'$, but the map to $W(h_3)$ is
 not constant.
 This is possible as $h_3\not\subset V'$.
 Then the image of $h_3$ in $W(h_3)$ moves in a pencil containing the image of $\ell_3$, but the images of $h_1$ and $h_2$
 are fixed in this pencil.
 Letting $\Fdot^6(t)$ be a pencil of flags containing  $F^6_5(t)$ gives the desired family $\calY'\subset\calY$, and shows
 that the  kernel $\Gamma$ of the map  $\Gal_{\blambda}\twoheadrightarrow S_3$ has an element of the form
 $\{e\}\times\{e\}\times\sigma$, for $\sigma$ a  transposition, and thus $\Gal_{\blambda}=S_2\wr S_3$.

 Suppose now that $\nu^i=0$ for  $i\geq 6$.
 Then $\blambda$ is one of
 \[
 \sThI\cdot\shFI\cdot\raisebox{-2pt}{\shIII}\cdot\raisebox{-2pt}{\scTII}\cdot\raisebox{-2pt}{\scTII}
 \ \mbox{ or }\ 
 \sThI\cdot\shF\cdot\raisebox{-2pt}{\scTII}\cdot\raisebox{-2pt}{\scTII}\cdot\raisebox{-2pt}{\scTII}
 \ \mbox{ or }\ 
 \shTh\cdot\shFI\cdot\raisebox{-2pt}{\scTII}\cdot\raisebox{-2pt}{\scTII}\cdot\raisebox{-2pt}{\scTII}\,.
 \]
 We show that each has monodromy group $S_2\wr S_3$ by a direct symbolic computation that is documented in a Maple script that
 is available on our webpage\footnote{{\tt http://www.math.tamu.edu/\~{}sottile/research/stories/GIVIX}}.
 We fix flags $\Fdot^1,\dotsc,\Fdot^4$ and give a pencil $\Fdot^5(t)$ of flags, all in $\QQ^9$.
 For these flags, the auxiliary problem $\T\cdot\I^4$ has three solutions
 $h_1,h_2,h_3\in\Gr(2,\QQ^5)$, independent of $t$.
 Thus the monodromy group of this family over $\CC$ (the coordinate $t$) is a subgroup $\Gamma$ of the kernel
 $\Gal_{\blambda}\twoheadrightarrow S_3$.
 Note that $\Gamma\subset S_2\times S_2\times S_2$, where the $i$th factor is the monodromy in the copy of the Schubert
 problem $\I^4$ constituting the fiber over $h_i$.

 For each solution $h_i$, we solve the problem in the fiber over the field $\QQ(t)$, and compute its discriminant
 $\delta_i(t)\in\QQ[t]$.
 The local monodromy around any root of $\delta_i(t)$ is the transposition $(12)$ in the $i$th factor of
 $\Gamma$.
 The computation shows that these discriminants are pairwise relatively prime, which implies that $\Gamma$ contains elements
 $((12),e,e)$, $(e,(12),e)$, and  $(e,e,(12))$, and therefore  $\Gamma=S_2\times S_2\times S_2$ and
 $\Gal_\blambda= S_2\wr S_3$.
\end{proof}
%%%%%%%%%%%%%%%%%%%%%%%%%%%%%%%%%%%%%%%%%%%%%%%%%%%%%%%%%%%%%%%%%%%%%%%%%%%%%%%%%

%\newpage
%%%%%%%%%%%%%%%%%%%%%%%%%%%%%%%%%%%%%%%%%%%%%%%%%%%%%%%%%%%%%%%%%%%%%%%%%%%%%%%%%
\subsection{Fibrations of Type II}\label{S:typeII}

The remaining 28 essential enriched Schubert problems in $\Gr(4,9)$ are fibrations in an essentially different manner than
in~\S\ref{S:typeI}.
Each is a special case of one of four related general constructions.
It remains an open problem to find a unifying construction for all four, such as what was accomplished for type I
fibrations in the subsequent paper~\cite{SWY}.

%%%%%%%%%%%%%%%%%%%%%%%%%%%%%%%%%%%%%%%%%%%%%%%%%%%%%%%%%%%%%%%%%%%%%%%%%%%%%%%%%
\begin{theorem}\label{T:ThirdFamily}
  Suppose that $a < b$ and
  $\bnu=((b{-}a{-}1,\kappa),1^{a-1}+\rho,\nu^3,\nu^4,\dots,\nu^s)$ is a Schubert problem in $\Gr(a,b)$. 
 Then
 \[
    \blambda =  \bigl((b{-}a,b{-}a,\kappa)\,,\, 
     2^a+\rho\,,\, (b{-}a{+}1,\nu^3)\,,\, 1^{a+1}+\nu^4\,,\, \nu^5, \dotsc, \nu^s\bigr)
\]
is a Schubert problem in $\Gr(2{+}a, 4{+}b)$ that is fibered over $\I^4$ in $\Gr(2,4)$ with fiber $\bnu$.
\end{theorem}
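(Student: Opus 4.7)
The plan is to verify the four parts of Definition~\ref{D:fibration}, adapting the template of the proof of Theorem~\ref{T:First_Family}. First, a direct codimension count confirms $\sum_i|\lambda^i|=(2{+}a)(2{+}b{-}a)$, so $\blambda$ is a valid Schubert problem in $\Gr(2{+}a,4{+}b)$; the ``extra'' codimensions $|\lambda^i|-|\nu^i|$ for $i=1,\dotsc,4$ are $b{-}a{+}1,\, a{+}1,\, b{-}a{+}1,\, a{+}1$, summing to $2b{+}4$, which matches the $2b$ structural codimension of a splitting $H=h\oplus K$ plus the codimension $4$ of $\I^4$ on $\Gr(2,V)\simeq\Gr(2,4)$.

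For Part~(1), I would identify a $4$-dimensional subspace $V\subset\CC^{4+b}$ and a complementary $b$-dimensional subspace $W$, canonically constructed from $\calFdot$, so that for every $H\in\Omega_\blambda\calFdot$ one has $H=h\oplus K$ with $h:=H\cap V\in\Gr(2,V)$ and $K:=H\cap W\in\Gr(a,W)$. The corner of $\lambda^1=(b-a,b-a,\kappa)$ at $j=2$ forces $\dim(H\cap F^1_4)\geq 2$, while the corner of $\lambda^2=2^a+\rho$ at $j=a$ forces $\dim(H\cap F^2_b)\geq a$; these strongly suggest the natural first candidates $V=F^1_4$ and $W=F^2_b$, which are generically complementary, although pinning down the correct $V,W$ so that the subsequent $\I^4$ and $\bnu$ conditions emerge cleanly is itself part of the technical work. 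I would then define flags $\calGdot(h)$ in $W$ by $\Gdot^i(h):=((h+\Fdot^i)\cap U(h))/h$ for an appropriate codimension-$4$ subspace $U(h)\supset h$, in the manner of Theorem~\ref{T:First_Family}, and identify four $2$-planes $E^1_2,E^2_2,E^3_2,E^4_2\subset V$ giving the auxiliary $\I^4$ conditions on $h$.

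Parts~(2) and~(3) then reduce to verifying the biconditional $K\in\Omega_{\nu^i}\Gdot^i(h)\Longleftrightarrow H=h\oplus K\in\Omega_{\lambda^i}\Fdot^i$ for each $i=1,\dotsc,s$. The case $i\geq 5$ is immediate since $\lambda^i=\nu^i$. For $i\in\{3,4\}$, the partitions $(b-a+1,\nu^3)$ and $1^{a+1}+\nu^4$ have the same forms as appear in Theorem~\ref{T:First_Family}, so the same dimension-counting arguments apply: the $\I$ on $h$ comes from $E^3_2$ (the projection of $F^3_2$ to $V$ along $W$) and from $E^4_2=V\cap F^4_{b+2}$ respectively, together with $\nu^3,\nu^4$ on $K$ via $\Gdot^3(h),\Gdot^4(h)$.

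The main obstacle is case $i\in\{1,2\}$, the essential Type~II content. Unlike in Theorem~\ref{T:First_Family}, the doubled partitions $\lambda^1=(b-a,b-a,\kappa)$ and $\lambda^2=2^a+\rho$ do not contribute simple $\dim(H\cap F^i_2)\geq 1$ conditions: their strongest corners at $j=2$ and $j=a$ supply only the structural splitting, while their remaining corners give the subordinate parts $\kappa$ of $\nu^1=(b-a-1,\kappa)$ and $\rho$ of $\nu^2=1^{a-1}+\rho$. The missing first parts---$b{-}a{-}1$ of $\nu^1$ and $1^{a-1}$ of $\nu^2$---as well as the two remaining $\I$ conditions on $h$ and their planes $E^1_2,E^2_2\subset V$, must arise from a subtle interaction between all four $\lambda^i$ through the graph-like structure of $F^3_2$ and $F^4_{b+2}$ relative to $V\oplus W$. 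Establishing that the flags $\Gdot^1(h),\Gdot^2(h)$ introduce exactly these missing parts and that the four planes $E^i_2$ are in general position in $\Gr(2,V)$ is the technical heart of the proof. Once case $i\in\{1,2\}$ is resolved, Part~(4) follows as in Theorem~\ref{T:First_Family}: as $\calFdot$ varies over a dense open subset of $(\Fl_{4+b})^s$, the induced configurations $\calEdot$ and $\calGdot(h)$ sweep out dense open subsets of their parameter spaces.
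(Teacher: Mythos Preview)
Your overall framework is right---verify Definition~\ref{D:fibration} following the template of Theorem~\ref{T:First_Family}---and your codimension count is correct. You also correctly identify that cases $i=1,2$ are the crux. But there is a genuine gap: your ``first candidates'' $V=F^1_4$ and $W=F^2_b$ do not work, and you never supply correct choices. With $V=F^1_4$, the condition $\dim(H\cap F^1_4)\geq 2$ merely says $h\in\Gr(2,V)$; it produces no $\I$-condition on $h$, and likewise $\lambda^2$ only gives $K\subset W$. So only two of the four $\I$'s on $h$ are accounted for (those from $\lambda^3,\lambda^4$), and your appeal to a ``subtle interaction between all four $\lambda^i$'' does not close this. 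Identifying the right $V$ and $W$ is not residual technical work here---it is the whole idea.

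The paper's construction is different. Set $V:=\langle F^3_2,F^1_4\rangle\cap\langle F^3_2,F^2_b\rangle\simeq\CC^4$. Then $F^3_2\subset V$, and $E^1_2:=F^1_4\cap V$, $E^2_2:=F^2_b\cap V$, $E^3_2:=F^3_2$, $E^4_2:=F^4_{b+2}\cap V$ are four $2$-planes, each met by $h:=H\cap V$ in a line---so all four $\I$'s on $h$ are visible. For the fiber, set $G^1_2:=F^4_{b+2}\cap F^1_4$, $G^2_{b-2}:=F^4_{b+2}\cap F^2_b$, and $W:=G^1_2\oplus G^2_{b-2}\simeq\CC^b$. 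Then $K:=H\cap W$ has dimension $a$ and automatically lies in $\Omega_{(b-a-1)}G^1_2\cap\Omega_{1^{a-1}}G^2_{b-2}$; this is precisely how the ``missing first parts'' $b{-}a{-}1$ of $\nu^1$ and $1^{a-1}$ of $\nu^2$ appear---not from a graph-like interaction but from building $W$ inside $F^4_{b+2}$ as a direct sum of pieces of $F^1_4$ and $F^2_b$. With these $V$ and $W$ in hand, the flags $\Gdot^i(h):=(h+\Fdot^i)\cap W$ and the biconditionals $K\in\Omega_{\nu^i}\Gdot^i(h)\Leftrightarrow h\oplus K\in\Omega_{\lambda^i}\Fdot^i$ follow by the same dimension-counting as in Theorem~\ref{T:First_Family} (the cases $i\geq 5$, $i=4$, and $i=3$ are literally the same arguments, the last corresponding to the paper's case $i=1$ there).
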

%%%%%%%%%%%%%%%%%%%%%%%%%%%%%%%%%%%%%%%%%%%%%%%%%%%%%%%%%%%%%%%%%%%%%%%%%%%%%%%%%

As $\bnu$ is a Schubert problem in  $\Gr(a,b)$, we have that $\kappa,\rho$ are partitions with $\kappa_1,\rho_1\leq b{-}a{-}1$ and
$\kappa_a=\rho_a=0$.

When $a=2$ and $b=5$, Theorem~\ref{T:ThirdFamily} gives 18 enriched Schubert problems fibered over $\I^4=2$.
Since $b{-}a{-}1=2$ and $a{-}1=1$, the first condition in the fiber $\bnu$ is either $\T$ or
\raisebox{-5.5pt}{$\TI$} or \raisebox{-5.5pt}{$\mTT$} and the second is either $\T$ or $\I$.
The only such $\bnu$ with $d(\bnu)>1$ are
\[
  \T^2\cdot\I^2\ =\ 2\,,\quad
  \raisebox{-5.5pt}{\TI}\cdot\I^3\ =\ 2\,,\
  \quad\mbox{and}\quad\ 
    \T\cdot\I^4\ =\ 3\,.
\]
These give eleven, four, and four enriched Schubert problems in $\Gr(4,9)$, respectively.
Here is one Schubert problem from each of these families.
\[
  \raisebox{-7.5pt}{\ThTh}\cdot\raisebox{-7.5pt}{\TT}\cdot\raisebox{-7.5pt}{\FT}\cdot\raisebox{-14.5pt}{\bTII}\,,\qquad
  \raisebox{-14.5pt}{\ThThI}\cdot\raisebox{-7.5pt}{\TT}\cdot\raisebox{-7.5pt}{\FI}\cdot\raisebox{-14.5pt}{\bTII}\,,\qquad
  \raisebox{-7.5pt}{\ThTh}\cdot\raisebox{-7.5pt}{\TT}\cdot\raisebox{-7.5pt}{\FI}\cdot\raisebox{-14.5pt}{\bTII}\cdot\bI\,.
\]
For nearly the same reasons as Corollary~\ref{C:First_Family}, these have Galois groups $S_2\wr S_2$, $S_2\wr S_2$, and 
$S_3\wr S_2$, respectively.

%%%%%%%%%%%%%%%%%%%%%%%%%%%%%%%%%%%%%%%%%%%%%%%%%%%%%%%%%%%%%%%%%%%%%%%%%%%%%%%%%
\begin{corollary}\label{C:ThirdFamily}
  These 19 Schubert problems all have the claimed Galois groups over $\CC$.
\end{corollary}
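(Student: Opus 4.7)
The plan is to adapt the proof of Corollary~\ref{C:First_Family} to this different family of fibrations. By Theorem~\ref{T:ThirdFamily}, each of the 19 Schubert problems $\blambda$ is fibered over $\I^4=2$ in $\Gr(2,4)$ with fiber $\bnu$ one of $\T^2\cdot\I^2=2$, $\raisebox{-5.5pt}{\TI}\cdot\I^3=2$, or $\T\cdot\I^4=3$. Proposition~\ref{P:Two} gives $\Gal_\bnu = S_{d(\bnu)}$ in each case, and Lemma~\ref{L:Fibered} produces a surjection $\Gal_\blambda \twoheadrightarrow S_2$ whose kernel $\Gamma$ is an $S_2$-stable subgroup of $S_{d(\bnu)} \times S_{d(\bnu)}$ that projects onto each factor. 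Exhibiting an element $(e,\sigma) \in \Gamma$ with $\sigma$ a transposition then forces $\Gamma = S_{d(\bnu)} \times S_{d(\bnu)}$ (conjugate by $S_2$ to obtain $(\sigma,e)$, and use that the transpositions generate the symmetric group), whence $\Gal_\blambda = S_{d(\bnu)} \wr S_2$ as claimed.

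To produce such an element I would follow the template of Corollary~\ref{C:First_Family}. First fix sufficiently general subspaces of $\CC^9$ that pin down the auxiliary $\I^4$ problem in $\Gr(2,V)$ arising from the construction of Theorem~\ref{T:ThirdFamily}, where $V \simeq \CC^4$, and let $\calY \subset (\Fl_9)^s$ be the family of flags in $\CC^9$ respecting this fixed data. Over $\calY$ the auxiliary problem is constant with two solutions $h_1,h_2$ satisfying $V = h_1 \oplus h_2$, so that the monodromy of $\calX_\blambda|_\calY \to \calY$ is contained in $\Gamma$. I would then single out an index $i$ corresponding to a condition $\I$ in $\bnu$ and construct a one-parameter subfamily $\calY' \subset \calY$ obtained by varying a single subspace of $\Fdot^i$ while holding all other flag data fixed, arranged so that the subspace $G^{1i}_c(h_1)$ contributing to the Schubert condition above $h_1$ remains constant, while the corresponding subspace $G^{2i}_c(h_2)$ above $h_2$ sweeps out a generic pencil. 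Applying Remark~\ref{R:Galois_Comment} (or, for the two-solution fibers $\T^2\cdot\I^2$ and $\raisebox{-5.5pt}{\TI}\cdot\I^3$, the tangent-pencil argument for $\I^4$ used in Proposition~\ref{P:Two}) to the $h_2$-instance produces a degeneration with a unique double point, and hence a simple transposition $\sigma$ in the second factor of $\Gamma$, giving the required $(e,\sigma)$.

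The main obstacle, as in Corollary~\ref{C:First_Family}, will be the linear-algebraic bookkeeping needed to decouple $G^{1i}_c$ from $G^{2i}_c$. Concretely, for each of the cases $i=1$, $i=2$, $i \in \{3,4\}$, and $i \geq 5$ one must extract from the proof of Theorem~\ref{T:ThirdFamily} explicit formulas expressing the subspaces $G^{ai}_c(h_a) \subset W(h_a)$ in terms of $h_a$ and $\Fdot^i$, and then verify that because $h_1 \oplus h_2 = V$ is a direct sum there is enough freedom in moving a single subspace of $\Fdot^i$ to trivialise the dependence on $h_1$ while steering the dependence on $h_2$ through a general pencil. Since the fibration of Theorem~\ref{T:ThirdFamily} differs from that of Theorem~\ref{T:First_Family}, the precise formulas will differ in detail and will need careful case analysis, especially for the two ``large'' conditions $(b{-}a,b{-}a,\kappa)$ and $2^a + \rho$ where the subspace cutting out $G^{ai}_c$ is not simply $h_a + F^i_c$; but the direct-sum decomposition $V = h_1 \oplus h_2$ makes the decoupling step go through in direct analogy with cases (1)--(3) in the proof of Corollary~\ref{C:First_Family}, completing the argument.
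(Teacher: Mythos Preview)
Your proposal is correct and follows essentially the same route as the paper, which itself gives only a brief sketch referring back to the proof of Corollary~\ref{C:First_Family}: fix the auxiliary $\I^4$ instance to trap monodromy in $\Gamma$, then vary one flag so that $\calGdot(h_1)$ stays fixed while $G^i_3(h_2)$ moves in a pencil, and invoke Remark~\ref{R:Galois_Comment} to obtain the transposition $(e,\sigma)$. One small simplification you can take advantage of: in the fibration of Theorem~\ref{T:ThirdFamily} the space $W$ is independent of $h$ (unlike in Theorem~\ref{Th:Second_Family}), so there is no need to write $W(h_a)$ and the decoupling argument via $V=h_1\oplus h_2$ goes through exactly as in Corollary~\ref{C:First_Family} using the formulas~\eqref{Eq:FlagsII}.
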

%%%%%%%%%%%%%%%%%%%%%%%%%%%%%%%%%%%%%%%%%%%%%%%%%%%%%%%%%%%%%%%%%%%%%%%%%%%%%%%%%

%%%%%%%%%%%%%%%%%%%%%%%%%%%%%%%%%%%%%%%%%%%%%%%%%%%%%%%%%%%%%%%%%%%%%%%%%%%%%%%%%
\begin{proof}
  This may be proven using arguments similar to those in the proof of Corollary~\ref{C:First_Family}.
  Fix a four-dimensional subspace $V\subset\CC^9$ and four general 2-planes $E^1_2,E^2_2,E^3_2,E^4_2$ in $V$.
  Let $\calY\subset(\Fl_9)^s$ be the subset of flags $\calFdot$ such that
 \[
   F^3_2=E^3_2\,,\
   V=\langle F^3_2,F^2_4\rangle \cap \langle F^3_2,F^2_5\rangle\,,\
   E^1_2=F^1_4\cap V\,,\
   E^2_2=F^2_5\cap V\,,\ \mbox{ and }\ 
   E^4_2=F^4_7\cap V\,.
 \] 
 For a flag $\calFdot$ in $\calY$ and $i\in\{1,\dotsc,4\}$, let $\Edot^i=\Fdot^i\cap V$.

  As will be shown in the proof of Theorem~\ref{T:ThirdFamily}, for any flag $\calFdot$ in $\calY$, $\blambda$ one of the 19 problems,
  and $H\in \Omega_{\blambda}\calFdot$, the $2$-plane $h:=H\cap V$ is a solution to $\Omega_{\sI^4}\calEdot$, which is the problem of four
  lines
  \begin{equation}\label{Eq:anotherOne}
    \Omega_{\sI} E^1_2\ \cap\ 
    \Omega_{\sI} E^2_2\ \cap\ 
    \Omega_{\sI} E^3_2\ \cap\ 
    \Omega_{\sI} E^4_2\ .
  \end{equation}
  Let $h_1,h_2$ be the two solutions to~\eqref{Eq:anotherOne}.
  Define $\defcolor{W}\simeq \CC^5$ to be the span of the two spaces
  $G^1_2:=F^4_7\cap F^1_4$ and $G^2_3:=F^4_7 \cap F^2_5$, which are in direct sum.
  For each $i\in\{1,\dotsc,s\}$ and solution $h$ to~\eqref{Eq:anotherOne}, define $\Gdot^i(h)$ to be the flag $(h+\Fdot^i)\cap W$.
  (See~\eqref{Eq:FlagsII} for more information on these flags).
  
  Then, for every $i\in\{1,\dotsc,s\}$, there is a pencil in $\calY$ of flags such that the induced flags $\calGdot(h_1)$ are fixed,
  but $G^i_3(h_2)$ moves in a pencil while all other flags $\Gdot^j(h_2)$ with $j\neq i$ is fixed, and that pencil induces a simple
  transposition in $\Omega_{\bnu}\calGdot(h_2)$.
  As argued in the proof of Corollary~\ref{C:First_Family}, this suffices to complete the proof.
\end{proof}
%%%%%%%%%%%%%%%%%%%%%%%%%%%%%%%%%%%%%%%%%%%%%%%%%%%%%%%%%%%%%%%%%%%%%%%%%%%%%%%%%

%%%%%%%%%%%%%%%%%%%%%%%%%%%%%%%%%%%%%%%%%%%%%%%%%%%%%%%%%%%%%%%%%%%%%%%%%%%%%%%%%
\begin{proof}[Proof of {Theorem~\ref{T:ThirdFamily}}]
 First observe that as $\bnu$ is a Schubert problem in $\Gr(a,b)$, we have
  \[
    a(b-a)\ =\ |\bnu|\ =\ b{-}a{-}1{+}|\kappa| + a{-}1{+}|\rho|+\sum_{i=3}^s |\nu^i|
    \ =\  b{-}2+|\kappa|+|\rho|+\sum_{i=3}^s |\nu^i|\ .
  \]
  Then
  \begin{eqnarray*}
    |\blambda| &=& 2(b{-}a){+}|\kappa|+2a{+}|\rho|+(b{-}a{+}1){+}|\nu^3| + (a{+}1){+}|\nu^4| + \sum_{i=5}^s|\nu^i|\\
               &=& |\kappa|+|\rho|+\sum_{i=3}^s |\nu^i| \  + 2(b{-}a)+2a+(b{-}a{+}1) + (a{+}1)\\
               &=& a(b-a)-(b{-}2) +  2(b{-}a)+2a+(b{-}a{+}1) + (a{+}1)\,, 
   \end{eqnarray*}
  % simplify( a*(b-a)-(b-2) +  2*(b-a)+2*a+(b-a+1) + (a+1) - (a+2)*(b-a+2));
  which equals $(a+2)(b-a+2)$, so that $\blambda$ is a Schubert problem in $\Gr(a{+}2,b{+}4)$.
  
  Let $\calFdot=(\Fdot^1,\dotsc,\Fdot^s)$ be general flags in $\CC^{b+4}$.
  Let $\defcolor{V}:=\langle F^3_2,F^1_4\rangle\cap\langle F^3_2,F^2_b\rangle\simeq\CC^4$, and for $i\in\{1\dotsc,4\}$,
  define $\Edot^i:=\Fdot^i\cap V$.
  In particular $E^1_2=F^1_4\cap V$, $E^2_2=F^2_b\cap V$,   $E^3_2=F^3_2$, and  $E^4_2=F^4_{b+2}\cap V$.
  These give an instance of $\I^4$ in $\Gr(2,V)$.
  Indeed, from the definition~\eqref{Eq:SchubertCondition} of a Schubert variety, we see that if $H$ lies in 
  \begin{equation}\label{Eq:bigFour}
    \Omega_{(b-a)^2}\Fdot^1\,\cap\,
    \Omega_{2^a}\Fdot^2\,\cap\,
    \Omega_{(b-a+1)}\Fdot^3\,\cap\,
    \Omega_{1^{a+1}}\Fdot^4\,,
  \end{equation}
  then $h=H\cap V$ has dimension 2 and we have that $\dim( h\cap E^i_2)=1$ for $i\in\{1,\dotsc,4\}$.
  Thus $h\in\Omega_{\sI^4}\calEdot$ is a solution to the Schubert problem $\I^4=2$ given by the flags $\calEdot$.
  As any flags $\calEdot$ in $V$ may occur in this way, this shows part (1) and the first half of part (4) of
  Definition~\ref{D:fibration}.

  Set $\defcolor{G^1_2}:=F^4_{b+2}\cap F^1_4\simeq \CC^2$  and
  $\defcolor{G^2_{b-2}}:=F^4_{b+2}\cap F^2_{b}\simeq \CC^{b-2}$, and set
  $\defcolor{W}:= G^1_2\oplus G^2_{b-2} \simeq \CC^b$.
  If $H$ lies in the intersection~\eqref{Eq:bigFour} (which contains $\Omega_{\blambda}\calFdot$), then $H\cap W$ has
  dimension $a$ and lies in $\Omega_{(b-a-1)}G^1_2\cap\Omega_{1^{a-1}}G^2_{b-2}$.

  Now let $h$ be a solution to  $\Omega_{\sI^4}\calEdot$.
  For each $i\in\{1,\dotsc,s\}$, define $\Gdot^i(h)$ to be $(h+\Fdot^i)\cap W$.
  Then, except for $G^1_1,G^2_2,G^2_{b-1}$, and $G^2_{b-2}$, we claim that for $1\leq c\leq b$,
 \begin{equation}\label{Eq:FlagsII}
   \begin{tabular}{l}
    $G^i_c(h) = (h+F^i_{c+2}) \cap W$ for $i\geq 5$,\\
    $G^i_c(h) = (h+F^i_{c+1}) \cap W$ for $i=2,4$, and \rule{40pt}{0pt} \\
    $G^i_c(h) = (h+F^i_{c+3}) \cap W$ for $i=1,3$.
  \end{tabular}
 \end{equation}
 The arguments for~\eqref{Eq:FlagsII} when $i\geq 4$ or $i=3$ are the same as in the proof of Theorem~\ref{T:First_Family},
 except that the case $i=3$ here is case $i=1$ in that proof.
 
 Suppose that $i=1$.
 We defined $G^1_2:= F^1_4\cap W$ and we set $G^1_1:=F^1_3\cap W$.
 Let $3\leq c<b$.
 By the generality of $\calFdot$, $h\cap F^1_4=h\cap F^1_{c+3}$ has dimension 1, so that $\dim(h+F^1_{c+3})=c{+}4$.
 As $W$ has codimension 4, and is in linear general position modulo $G^1_2\subset  F^1_4$,~\eqref{Eq:FlagsII} follows for
 $i=1$.  

 Now suppose $i=2$ and let $1\leq c<b{-}2$.
 Since $\dim(h\cap F^2_b)=1$ and $h\cap F^2_{b-1}=\{0\}$ (by general position), we have
 $\dim((h+F^2_{c+1})\cap F^2_b)=c{+}2$.
 As $W=F^2_b\cap F^4_{b+2}$, we conclude that $c=\dim((h+F^2_{c+1}) \cap W)$, which implies~\eqref{Eq:FlagsII} for $i=2$.
 If we fix $h$ and consider all flags $\calFdot$ that induce $\calEdot, G^1_2$, and $G^2_{b-2}$, we obtain all flags $\calGdot$ in $W$
 with $\Gdot^1(h)$ containing $G^1_2$ and the same for $\Gdot^2(h)$ and $G^2_b$.
 This completes the proof of (4) in Definition~\ref{D:fibration}.

 We establish parts (2) and (3) in Definition~\ref{D:fibration} together by showing that for each $i\in\{1,\dotsc,s\}$, 
 \begin{equation}\label{Eq:equivII}
   K\ \in\ \Omega_{\nu^i}\Gdot^i(h)\ 
   \Longleftrightarrow\ h\oplus K\in\Omega_{\lambda^i}\Fdot^i\,.
 \end{equation}
  For $i\geq 3$, the argument is identical to the arguments given in the proof of Theorem~\ref{T:First_Family} (with the proviso that $i=3$
  here is the case $i=1$ in that proof).
  While it is nearly the same for $i=1,2$, we give the arguments.

  Suppose that $i=1$.
  As $\nu^1_1=b{-}a{+}1$, we show that that condition~\eqref{Eq:SchubertCondition}
  holds for $j=1$ in~\eqref{Eq:equivII}.
  If $H=K\oplus h$ lies in the intersection~\eqref{Eq:bigFour} (which contains $\Omega_{\blambda}\calFdot$), then $K=H\cap W$ lies in
  $\Omega_{(b-a-1)}G^1_2\subset \Omega_{\nu^1}\Gdot^1(h)$, and the reverse implication is similar.

  Let us complete the case of $i=1$. 
  We first observe that for $K\in\Gr(a,W)$ and $3\leq c\leq b$, we have
  $K\cap G^1_c(h)=K\cap W\cap(h+F^1_{c+3})=K\cap (h+F^1_{c+3})$.
  Furthermore, if we set $H:=h\oplus K$ then
  \[
    2+\dim(K\cap G^1_c(h))\ =\ 2 + \dim( K\cap (h+F^1_{c+3}))\ =\ \dim(H\cap(h+ F^1_{c+3}))\,.
  \]
  The last equality is because $h\cap K=\{0\}$.
  We also have that 
  \[
    1+\dim(H\cap F^1_{c+3})\ =\ \dim(H\cap(h+ F^1_{c+3}))\,,
  \]
  as $\dim(h\cap F^1_{c+3})=1$.
  Thus $\dim(H\cap F^1_c)=1+\dim(K\cap G^1_c(h))$.
  Now suppose that $j>1$.
  Then $\lambda^1_{j+1}=\nu^1_j$.  
  Then the condition~\eqref{Eq:SchubertCondition} for $j{+}1$ is
  \[
  1{+}j\ =\ \dim(H\cap F^1_{(b+4)-(a+2) +j+1-\lambda^1_{j+1}})
  \ =\ \dim(H\cap F^1_{b-a +j-\nu^1_j +3})\,,
  \]
  so that $j=\dim(K\cap G^1_{b-a +j-\nu^1_j})$, which proves~\eqref{Eq:equivII} when $i=1$.  

  Let $i=2$ and suppose that $1\leq c <b-2$.
  For $K\in\Gr(a,W)$, we again have that $K\cap G^2_c(h)= K\cap (h+F^2_{c+1})$.
  If we set $H:=K\oplus h$, then
  \[
  2+ \dim (K\cap G^2_c(h) )\ =\ 2+\dim (K\cap (h+F^2_{c+1}))\ =\ \dim (H\cap (h+F^2_{c+1}))\,,
  \]
  as $h\cap F^2_{c+1}=\{0\}$, we also have
  \[
  2+ \dim (H\cap F^2_{c+1}) )\ =\  \dim (H\cap (h+F^2_{c+1}))\,.
  \]
  We thus have $\dim (K\cap G^2_c(h) )=\dim (H\cap F^2_{c+1}) )$.
  For $1\leq j<a$, as $\lambda^2_j=1+\nu^2_j$, we have
  \[
  \dim(K\cap G^2_{b-a+j-\nu^2_j}(h))\ =\
  \dim(H\cap F^2_{b-a+j-\nu^2_j+1})\ =\
  \dim(H\cap F^2_{(b+4)-(a+2)+j-\lambda^2_j})\,.
  \]
  
  Finally, as $\nu^2_a=0$, the corresponding condition on $K$ in~\eqref{Eq:equivII} for $i=2$ is that $K\subset W$.
    As $\lambda^2_a=2$, the condition on $H=h\oplus K$ is  $\dim(H\cap F^2_{(b+4)-(a+2)+a-2})\geq a$,
    That is, $\dim(H\cap F_b)\geq a$.
      As $E^2_2=F^2_b\cap V$, we have $1=\dim(h\cap E^2_2)\leq\dim(h\cap F^2_b)$   and by case $j=a{-}1$ and $i=2$ of~\eqref{Eq:equivII},
      we have $a{+}1=\dim(K\cap G^2_{b-2}(h))<\dim(K\cap F^2_b)$ as $G^2_{b-2}(h)=F^4_{b+2}\cap F^2_b$.
      Thus $\dim(H\cap F^2_b)\geq a$, which 
 completes the proof.    
\end{proof} 
%%%%%%%%%%%%%%%%%%%%%%%%%%%%%%%%%%%%%%%%%%%%%%%%%%%%%%%%%%%%%%%%%%%%%%%%%%%%%%%%%

We give a related construction of enriched Schubert problems.

%%%%%%%%%%%%%%%%%%%%%%%%%%%%%%%%%%%%%%%%%%%%%%%%%%%%%%%%%%%%%%%%%%%%%%%%%%%%%%%%%
\begin{theorem}\label{Th:SecondTypeII}
  Suppose that $a < b$ and
  $\bnu=(\nu^1,(b{-}a{-}1,\kappa),1^{a-1}+\rho,\nu^4,\dots,\nu^s)$ is a Schubert problem in $\Gr(a,b)$. 
 Then
 \[
    \blambda =  \bigl((b{-}a{+}2, \nu^1)\,,\,  (b{-}a,b{-}a,\kappa)\,,\, 
     2^a+\rho\,,\, 1^{a+1}+\nu^4\,,\, 1^{a+1}+\nu^5\,,\, \nu^6, \dotsc, \nu^s\bigr)
\]
is a Schubert problem in $\Gr(2{+}a, 5{+}b)$ fibered over $\T\cdot\I^4$ in $\Gr(2,5)$ with fiber $\bnu$.
\end{theorem}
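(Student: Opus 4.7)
The proof proceeds in parallel with those of Theorems~\ref{T:First_Family},~\ref{Th:Second_Family}, and~\ref{T:ThirdFamily}. One first verifies that $\blambda$ is a Schubert problem in $\Gr(a+2,b+5)$ via the arithmetic identity
\[
   \sum_i |\lambda^i| \;=\; a(b-a) + (b-a+2) + 2(b-a) + 2a + 2(a+1) - (b-2) \;=\; (a+2)(b-a+3)\,,
\]
obtained by using $|\bnu|=a(b-a)$ to collect the contributions from $\nu^1$, $\kappa$, $\rho$, and $\nu^i$ for $i\geq 4$.

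For the fibration, I would mimic the construction of $V$ in the proof of Theorem~\ref{T:ThirdFamily} and set
\[
   V \;:=\; \langle F^1_2,F^2_5\rangle \;\cap\; \langle F^1_2,F^3_{b+1}\rangle \;\simeq\; \CC^5\,,
\]
which contains $F^1_2$ by construction. Here $F^2_5$ is the $j=2$ corner subspace of $\lambda^2=(b-a,b-a,\kappa)$ and $F^3_{b+1}$ is the $j=a$ corner subspace of $\lambda^3=2^a+\rho$; these play roles exactly analogous to $F^1_4$ and $F^2_b$ in Theorem~\ref{T:ThirdFamily}. Setting $E^1_2:=F^1_2$ and letting $E^i_3\subset V$ for $i=2,3,4,5$ be the three-dimensional subspaces cut out of $V$ by the appropriate subspaces from $\Fdot^i$ gives an instance $\calEdot$ of $\bmu=\T\cdot\I^4$ in $\Gr(2,V)$. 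A dimension count using the Schubert conditions satisfied by $H\in\Omega_\blambda\calFdot$, arranged exactly as in the corresponding step of the proof of Theorem~\ref{T:ThirdFamily}, shows that $h:=H\cap V\in\Omega_\bmu\calEdot$, establishing part~(1) and the first half of part~(4) of Definition~\ref{D:fibration}.

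For parts~(2), (3), and the remainder of~(4), I would introduce a $b$-dimensional complement $W$ of $V$ in $\CC^{b+5}$ constructed from the two column subspaces $F^4_{b+3}$ and $F^5_{b+3}$ together with pieces of $\Fdot^2$ and $\Fdot^3$, in direct analogy with $W=G^1_2\oplus G^2_{b-2}$ from Theorem~\ref{T:ThirdFamily}. For each $h\in\Omega_\bmu\calEdot$, define $\Gdot^i(h):=W\cap(h+\Fdot^i)$, with the index shift dictated by the corners of $\lambda^i$. A case-by-case analysis, split according to whether $\lambda^i$ is a row condition ($i=1$), a paired condition ($i=2,3$), a column condition ($i=4,5$), or a trivial condition ($i\geq 6$), then establishes the key equivalence $K\in\Omega_{\nu^i}\Gdot^i(h) \Leftrightarrow h\oplus K\in\Omega_{\lambda^i}\Fdot^i$ by arguments patterned on~\eqref{Eq:Claim_1},~\eqref{Eq:Claim_2},~\eqref{Eq:Claim_3}, and~\eqref{Eq:equiv}.

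The main obstacle is the treatment of the paired conditions $\lambda^2=(b-a,b-a,\kappa)$ and $\lambda^3=2^a+\rho$: these must be used jointly to cut out both the $3$-dimensional $\I$-subspaces $E^2_3,E^3_3\subset V$ and, together with the column flags $F^4_{b+3}$ and $F^5_{b+3}$, to identify the canonical $2$-dimensional and $(b-2)$-dimensional building blocks of $W$ corresponding to the fiber conditions $(b-a-1,\kappa)$ and $1^{a-1}+\rho$. This is the same coupling that appears in Theorem~\ref{T:ThirdFamily}, and verifying the dimension-shift formulas here requires analogous but correspondingly more elaborate bookkeeping.
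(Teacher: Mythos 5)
Your identification of the auxiliary problem is correct and matches the paper: setting $V:=\langle F^1_2,F^2_5\rangle\cap\langle F^1_2,F^3_{b+1}\rangle\simeq\CC^5$ with $E^1_2=F^1_2$ and $E^i_3=\Fdot^i\cap V$ does give the instance of $\T\cdot\I^4=3$ in $\Gr(2,V)$, and your arithmetic check that $|\blambda|=(a{+}2)(b{-}a{+}3)$ is right. The gap is in your construction of the complement. You propose a single $b$-dimensional complement $W$ of $V$, built once and for all from $F^4_{b+3}$, $F^5_{b+3}$ and pieces of $\Fdot^2,\Fdot^3$, ``in direct analogy with $W=G^1_2\oplus G^2_{b-2}$ from Theorem~\ref{T:ThirdFamily}.'' This analogy breaks: Theorem~\ref{T:ThirdFamily} fibers over $\I^4$ in $\Gr(2,4)$, so $V\simeq\CC^4$, the two solutions $h_1\oplus h_2=V$, and a fixed complement of $V$ can receive all the fiber data. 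Here the base is $\T\cdot\I^4$ in $\Gr(2,5)$, $V\simeq\CC^5$, and a fixed complement $W$ only gives $\dim(H\cap W)\geq a{-}3$ for generic $H$ with $H\cap V=h$; there is no canonical $h$-independent $b$-plane in the $(b{+}1)$-dimensional $F^4_{b+3}\cap F^5_{b+3}$ that contains $H\cap F^4_{b+3}\cap F^5_{b+3}$ simultaneously for all three $h$. The paper instead takes an $h$-dependent complement $W(h)\subset F^4_{b+3}\cap F^5_{b+3}$, spanned by $\langle h,F^2_5\rangle\cap F^4_{b+3}\cap F^5_{b+3}\simeq\CC^2$ and $\langle h,F^3_{b+1}\rangle\cap F^4_{b+3}\cap F^5_{b+3}\simeq\CC^{b-2}$; the two-dimensional and $(b{-}2)$-dimensional pieces you call ``canonical building blocks of $W$'' are in fact not $h$-independent, since $h$ enters through the spans $\langle h,F^2_5\rangle$ and $\langle h,F^3_{b+1}\rangle$.

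This matters because the bijection $H\mapsto(h,K)$ requires $K=H\cap W(h)$ to have dimension exactly $a$, and the Schubert conditions on $\Fdot^2$ and $\Fdot^3$ only force this once $h$ is used to enlarge the relevant subspaces before intersecting with $F^4_{b+3}\cap F^5_{b+3}$. This is the same phenomenon as in Theorem~\ref{Th:Second_Family}(ii), which also fibers over $\T\cdot\I^4$ and also takes an $h$-dependent space (there a quotient $W(h)=U(h)/h$). If you keep your $V$, your flags $\Edot^i$, and your case-by-case plan for establishing~\eqref{Eq:equiv}, but replace your fixed $W$ by the $h$-dependent $W(h)$ above and define $\Gdot^i(h):=(h+\Fdot^i)\cap W(h)$, the rest of your outline goes through in parallel with the proof of Theorem~\ref{T:ThirdFamily}.
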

%%%%%%%%%%%%%%%%%%%%%%%%%%%%%%%%%%%%%%%%%%%%%%%%%%%%%%%%%%%%%%%%%%%%%%%%%%%%%%%%%

As $\bnu$ is a Schubert problem in $\Gr(a,b)$, $\kappa,\rho$ are partitions with $\kappa_1,\rho_1\leq b{-}a{-}1$ and
$\kappa_a=\rho_a=0$. 

When $a=2$ and $b=4$, Theorem~\ref{Th:SecondTypeII} gives five Schubert problems in $\Gr(4,9)$ with fiber $\I^4$.
These problems are
\[
    \raisebox{-7.5pt}{\includegraphics{figures/41}} \cdot
    \raisebox{-7.5pt}{\includegraphics{figures/22}} \cdot
    \raisebox{-7.5pt}{\includegraphics{figures/22}} \cdot
    \raisebox{-14.5pt}{\includegraphics{figures/211}} \cdot
    \raisebox{-14.5pt}{\includegraphics{figures/111}}\ ,
    \quad
    \includegraphics{figures/4} \cdot
    \raisebox{-7.5pt}{\includegraphics{figures/22}} \cdot
    \raisebox{-7.5pt}{\includegraphics{figures/22}} \cdot
    \raisebox{-14.5pt}{\includegraphics{figures/211}} \cdot
    \raisebox{-14.5pt}{\includegraphics{figures/111}}\cdot\bI \ ,
    \quad
    \raisebox{-7.5pt}{\includegraphics{figures/41}} \cdot
    \raisebox{-7.5pt}{\includegraphics{figures/22}} \cdot
    \raisebox{-7.5pt}{\includegraphics{figures/22}} \cdot
    \raisebox{-14.5pt}{\includegraphics{figures/111}}\cdot
    \raisebox{-14.5pt}{\includegraphics{figures/111}}\cdot\bI \ ,
\]
and two others.

%%%%%%%%%%%%%%%%%%%%%%%%%%%%%%%%%%%%%%%%%%%%%%%%%%%%%%%%%%%%%%%%%%%%%%%%%%%%%%%%%
\begin{corollary}\label{C:SecondTypeII}
  These five Schubert problems in $G(4,9)$ from  Theorem~\ref{Th:SecondTypeII} have Galois group $S_2\wr S_3$ over $\CC$.
\end{corollary}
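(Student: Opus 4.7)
The plan is to follow the strategy of the second half of the proof of Corollary~\ref{C:Second_Family}, adapted to the present setting. By Theorem~\ref{Th:SecondTypeII}, each of the five Schubert problems $\blambda$ is fibered over $\T\cdot\I^4=3$ in $\Gr(2,5)$ with fiber $\I^4=2$ in $\Gr(2,4)$. Proposition~\ref{P:Two} gives that these base and fiber problems have Galois groups $S_3$ and $S_2$, respectively, so Lemma~\ref{L:Fibered} provides the inclusion $\Gal_\blambda\hookrightarrow S_2\wr S_3$ with surjective projection onto $S_3$. Let $\Gamma$ be the kernel of this projection; it is an $S_3$-stable subgroup of $(S_2)^3$ surjecting onto each factor, so it suffices to exhibit an element of $\Gamma$ of the form $(e,e,(12))$, since conjugating by $S_3$ then generates all of $(S_2)^3$.

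First I would freeze the subspaces determining the auxiliary instance of $\T\cdot\I^4$. Fix $F^1_2, F^2_3, F^3_7, F^4_7, F^5_7\subset\CC^9$ in linear general position, and let $\calY\subset(\Fl_9)^s$ denote the set of flags $\calFdot$ containing these subspaces and otherwise general. As in the proof of Corollary~\ref{C:Second_Family}, for every $\calFdot\in\calY$ the auxiliary problem in $\Gr(2,V)$, with $V=\langle F^1_2,F^2_3\rangle$, is constant and has three solutions $h_1,h_2,h_3$ that are linearly independent modulo $F^1_2$: their intersections $m_i:=h_i\cap F^2_3$ must span $F^2_3$, for otherwise all three $h_i$ would meet a common proper subspace of $F^2_3$, contradicting the generality of $\calFdot$. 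The monodromy of the restricted family $\calX_\blambda|_\calY\to\calY$ therefore lies in $\Gamma$.

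Next, I would treat those problems in which some condition $\lambda^i$ with $i\geq 6$ corresponds to a free fiber condition $\nu^i=\I$. Using the description $G^i_2(h)=((h+F^i_5)\cap U(h))/h$ from the proof of Theorem~\ref{Th:SecondTypeII}, the assignment $F^i_5\mapsto G^i_2(h)$ depends on $h$, and the linear independence of $h_3$ from $\langle h_1,h_2\rangle$ modulo $F^1_2$ provides enough freedom to select a pencil $F^i_5(t)$ for which $G^i_2(h_1)$ and $G^i_2(h_2)$ remain constant while $G^i_2(h_3)$ sweeps out a general pencil in $W(h_3)\simeq\CC^4$. By Remark~\ref{R:Galois_Comment} applied to $\I^4=2$ in $\Gr(2,\CC^4)$, the resulting pencil of fiber instances contains one with a unique double solution, so the induced monodromy yields the element $(e,e,(12))\in\Gamma$. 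This step is completely analogous to the pencil construction in the proof of Corollary~\ref{C:First_Family}.

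The main obstacle I anticipate is the problems in which no condition in $\blambda$ corresponds to a free simple fiber condition $\nu^i=\I$ with $i\geq 6$: every flag $\Fdot^i$ then interacts with the auxiliary problem via a non-simple piece, and the decoupled pencil construction above may fail to freeze the fibers over $h_1$ and $h_2$ while moving the fiber over $h_3$ generically. For these, mirroring the final paragraph of the proof of Corollary~\ref{C:Second_Family}, I would fix concrete integer flags together with a rational pencil $\Fdot^i(t)\subset\QQ^9$, solve the three fiber instances of $\I^4=2$ over $\QQ(t)$, and compute the resulting discriminants $\delta_1(t),\delta_2(t),\delta_3(t)\in\QQ[t]$. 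Pairwise coprimality of these discriminants would guarantee that local monodromy around a root of each $\delta_i(t)$ places an independent transposition into the corresponding $S_2$-factor of $(S_2)^3$, forcing $\Gamma=(S_2)^3$ and hence $\Gal_\blambda=S_2\wr S_3$. I would document this computation in a Maple script on the paper's webpage.
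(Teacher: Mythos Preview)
Your overall plan matches the paper's exactly---the paper's proof is literally the one sentence ``the argument is essentially the same as that for the family fibered over $\T\cdot\I^4$ given in Corollary~\ref{C:Second_Family}.'' So the strategy of freezing the auxiliary problem, using independence of $h_1,h_2,h_3$ modulo $E^1_2$, running a pencil when some free $\nu^i=\I$ with $i\geq 6$ exists, and falling back on a discriminant computation otherwise, is correct.

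However, you have transplanted the specific constructions from Theorem~\ref{Th:Second_Family}(ii) rather than adapting them to Theorem~\ref{Th:SecondTypeII}, and the two differ. In Theorem~\ref{Th:SecondTypeII} with $a=2$, $b=4$, the second Schubert condition is $\lambda^2=(2,2,\kappa)$, which involves $F^2_5$, not $F^2_3$; the third is $\lambda^3=2^a+\rho$, involving $F^3_5$, not $F^3_7$. Consequently the auxiliary $5$-space is $V=\langle F^1_2,F^2_5\rangle\cap\langle F^1_2,F^3_5\rangle$, not $\langle F^1_2,F^2_3\rangle$, and the subspaces to freeze in $\calY$ are $F^1_2,F^2_5,F^3_5,F^4_7,F^5_7$. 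Likewise, the proof of Theorem~\ref{Th:SecondTypeII} defines the fiber flags via $W(h)$ directly and never introduces a $U(h)$; your formula $G^i_2(h)=((h+F^i_5)\cap U(h))/h$ is lifted from the proof of Theorem~\ref{Th:Second_Family}(ii) and does not apply here. With your choice of $\calY$, the auxiliary problem is not constant over $\calY$, so the argument as written does not go through.

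Once you replace these objects by their Theorem~\ref{Th:SecondTypeII} counterparts, everything you wrote carries over: the intersections $m_i=h_i\cap E^2_3$ (with $E^2_3=F^2_5\cap\langle F^1_2,F^3_5\rangle$) still span $E^2_3$, giving the needed independence; the pencil construction for $i\geq 6$ works with the correct $W(h)$; and at least one of the five problems (namely $\raisebox{-2pt}{\sFI}\cdot\sTT\cdot\sTT\cdot\raisebox{-2pt}{\scTII}\cdot\raisebox{-2pt}{\shIII}$) has $s=5$ and so requires the symbolic discriminant computation.
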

%%%%%%%%%%%%%%%%%%%%%%%%%%%%%%%%%%%%%%%%%%%%%%%%%%%%%%%%%%%%%%%%%%%%%%%%%%%%%%%%%

We omit the argument; it is essentially the same as that for the family fibered over
$\T\cdot\I^4$  given in Corollary~\ref{C:Second_Family}.

%%%%%%%%%%%%%%%%%%%%%%%%%%%%%%%%%%%%%%%%%%%%%%%%%%%%%%%%%%%%%%%%%%%%%%%%%%%%%%%%%
\begin{proof}[Proof of Theorem~\ref{Th:SecondTypeII}]
  We derive the auxiliary Schubert problem $\T\cdot\I^4=3$ (part (1) of Definition~\ref{D:fibration})
  and sketch the derivation of the Schubert problem $\bnu$ in the fibers  (part (2) of Definition~\ref{D:fibration}), leaving the 
  remaining   verifications to the reader, as they are tediously similar to previous arguments.
  Let $\calFdot\in(\Fl_{b+5})^s$ be $s$ general flags and suppose that $H\in\Omega_{\blambda}\calFdot$.
  Then 
  \[
     \dim (H\cap F^1_2) =  1\,,\ 
     \dim (H\cap F^2_5) = 2\,,\ 
     \dim (H\cap F^3_{b+1})= a\,,\  \mbox{and}\ 
     \dim (H\cap F^i_{b+3}) = a{+}1\,,
   \]
  for $i\in\{4,5\}$.
  Let $\defcolor{V}:=\langle F^1_2,F^2_5 \rangle\cap \langle F^1_2,F^3_{b+1}\rangle\simeq\CC^5$ and let
  $\defcolor{\Edot^i}:= \Fdot^i\cap V$ for $i\in\{1,\dotsc,5\}$.
  We claim that $H\cap V\in\Gr(2,V)$ and is a solution of $\Omega_{\sT\cdot\sI^4}\calEdot$, that is,
  $\dim(H\cap V)=2$ and $H$ meets each of $E^1_2,E^2_3,\dotsc,E^5_3$  in a 1-dimensional subspace.

  Indeed, $E^1_2=F^1_2$, which meets $H$ in a 1-dimensional subspace.
  Also, $E^2_3=F^2_5\cap\langle F^1_2,F^3_{b+1}\rangle$---this has dimension 3 as 
  $\langle F^1_2,F^3_{b+1}\rangle$ has codimension 2 in $\CC^{b+5}$, and the flags are general.
  It meets $H$ in a 1-dimensional subspace, as $\dim( H\cap F^2_5)=2$ and
  $\langle F^1_2,F^3_{b+1}\rangle$ meets $H$ in dimension $a{+}1$ (codimension 1).
  Thus $V=E^1_2\oplus E^2_3$.
  As $H$ meets both $E^1_2$ and $E^2_3$  in 1-dimensional subspaces, $H\in\Gr(2,V)$ (that $H\cap V$ does not have dimension
  3 is due to the generality of $\calFdot$). 
  Then $E^3_3=F^3_{b+1}\cap\langle F^1_2,F^2_5 \rangle$ as $F^3_{b+1}$ has codimension 4.
  Since $H\cap\langle F^1_2,F^2_5 \rangle$ has dimension 3 and $H\cap F^3_{b+1}$ has codimension 2 in $H$, $\dim( H\cap E^3_3)=1$.
  Suppose that $i\in\{4,5\}$.
  Since $V$ has codimension $b$ in $\CC^{b+5}$, $\dim(F^i_{b+3}\cap V)=3$, so that $E^i_3=F^i_{b+3}\cap V$.
  As $H\cap F^i_{b+3}$ has codimension 1 in $H$, $\dim(H\cap E^i_3)=1$.
  This shows part (1) of Definition~\ref{D:fibration}, and the first half of part (4) follows by the same arguments as before.
  
  Given $h\in\Omega_{\sT\cdot\sI^4}\calEdot$, let $\defcolor{W(h)}\subset F^4_{b+3}\cap F^5_{b+3}$ be spanned by
  \[
    \defcolor{P}:=\langle h,F^2_5\rangle\cap F^4_{b+3}\cap F^5_{b+3}\simeq \CC^2
    \qquad\mbox{and}\qquad
    \defcolor{Q}:=\langle h,F^3_{b+1}\rangle\cap F^4_{b+3}\cap F^5_{b+3}\simeq \CC^{b-2}\,.
  \]
  These subspaces $P$ and $Q$ are in direct sum, $W(h)=P\oplus Q$. 
  (The dimension and direct sum claims follow as $h$ meets each of $F^2_5$ and $F^3_{b+1}$ in a 1-dimensional
  subspace, and the generality of the flags $\calFdot$).

  When $h\subset H\in\Omega_{\blambda}\calFdot$,
  $\dim(H\cap W(h))=a$.
  Indeed, $H$ meets $\langle h,F^2_5\rangle$ in a 3-plane and each of $\langle h,F^3_{b+1}\rangle$,  $F^{4}_{b+3}$, and
  $F^{5}_{b+3}$ in codimension 1 (has dimension $a{+}1$).
  Thus $\dim(H\cap P)=3{-}1{-}1=1$ and $\dim(H\cap Q)=a{+}1{-}1{-}1=a{-}1$.

  For $i\in\{1,\dotsc,s\}$, set $\defcolor{\Gdot^i(h)}:=(h+\Fdot^i)\cap W(h)$.
  Then the proof of the rest of Definition~\ref{D:fibration} is similar to arguments given before, particularly those for
  Theorem~\ref{T:ThirdFamily}.
\end{proof}
%%%%%%%%%%%%%%%%%%%%%%%%%%%%%%%%%%%%%%%%%%%%%%%%%%%%%%%%%%%%%%%%%%%%%%%%%%%%%%%%%

%\subsection{Two more families}
The five remaining enriched Schubert problems in $\Gr(4,9)$ come from two additional general constructions of
enriched problems.
While we explain their statements and the resulting Schubert problems in $\Gr(4,9)$, we will only sketch their proofs, and
only give the formal statement about their Galois groups.

%%%%%%%%%%%%%%%%%%%%%%%%%%%%%%%%%%%%%%%%%%%%%%%%%%%%%%%%%%%%%%%%%%%%%%%%%%%%%%%%%
\begin{theorem}\label{Th:SecondTypeIII}
  Suppose that  $a<b{-}1$ and
  \[
    \bnu\ =\ ((b{-}a{-}1, \kappa), (b{-}a{-}2, \rho), (2)^{a-1}+ \sigma, \nu^4,\dots,\nu^s)
  \]
  is a Schubert problem in $\Gr(a,b)$. 
  Then
  \[
    \blambda\ =\ \Bigl((b{-}a{-}1)^2,\kappa), (b{-}a{-}1)^2,\rho),
        \bigl( (3)^a, 2 \bigr) + \sigma, (1)^{a+1} + \nu^4, \nu^5, \dots, \nu^s  \Bigr)
  \]
  is a Schubert problem in $\Gr(2{+}a, 4{+}b)$ that is fibered over $\I^4$ in $\Gr(2,4)$ with fiber $\bnu$.
\end{theorem}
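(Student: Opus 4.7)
The plan is to establish the four conditions of Definition~\ref{D:fibration}, adapting the template of the proofs of Theorems~\ref{T:First_Family}, \ref{T:ThirdFamily}, and~\ref{Th:SecondTypeII}. A preliminary codimension check gives
\[
  |\blambda| - |\bnu| \;=\; (b{-}a{-}1) + (b{-}a) + (a{+}4) + (a{+}1) \;=\; 2b + 4 \;=\; (a{+}2)(b{-}a{+}2) - a(b{-}a)\,,
\]
so $|\blambda| = (a{+}2)(b{-}a{+}2)$ and $\blambda$ is a Schubert problem in $\Gr(a{+}2, b{+}4)$.

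Given a general instance $\calFdot \in (\Fl_{b+4})^s$, I would construct a specific 4-dimensional subspace $V \subset \CC^{b+4}$ as an intersection of spans of subspaces drawn from the four special flags $\Fdot^1, \dotsc, \Fdot^4$, analogous to the formula $V = \langle F^3_2, F^1_4\rangle \cap \langle F^3_2, F^2_b\rangle$ from Theorem~\ref{T:ThirdFamily}. Setting $\Edot^i := \Fdot^i \cap V$ for $i = 1, \dotsc, 4$, I would show that any $H \in \Omega_\blambda \calFdot$ has $h := H \cap V$ a 2-plane in $\Omega_{\sI^4}\calEdot$. The leading $(b{-}a{-}1)^2$ of $\lambda^1$ and $\lambda^2$ (which force $\dim(H \cap F^i_5) = 2$ for $i = 1, 2$), the last row of $\lambda^3 = (3^a,2) + \sigma$ (which forces $\dim(H \cap F^3_{b+1}) = a{+}1$), and $\lambda^4$ (which forces $\dim(H \cap F^4_{b+2}) = a{+}1$) together pin down a 1-dimensional intersection $h \cap E^i_2$ for each $i$, verifying part~(1).

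For parts~(2) and~(3), I would identify a complementary $b$-dimensional subspace $W$ (possibly depending on $h$) built as a direct sum of intersections of $F^3_{b+1}$ and $F^4_{b+2}$ with subspaces from the other flags, modeled on $W = (F^4_{b+2} \cap F^1_4) \oplus (F^4_{b+2} \cap F^2_b)$ from Theorem~\ref{T:ThirdFamily} but adjusted to reflect the shape $(3^a,2) + \sigma$ of $\lambda^3$. Setting $\Gdot^i(h) := (h + \Fdot^i) \cap W$, I would prove the equivalence
\[
  K \in \Omega_{\nu^i}\Gdot^i(h) \;\Longleftrightarrow\; h \oplus K \in \Omega_{\lambda^i}\Fdot^i
\]
for each $i = 1, \dotsc, s$ by dimension-counting entirely parallel to the verifications in the preceding theorems, so that $H \mapsto (h, K)$ with $K := H \cap W$ gives the desired bijection between $\Omega_\blambda\calFdot$ and $\Omega_{\sI^4}\calEdot \times \Omega_\bnu\calGdot(h)$. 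Part~(4) then follows by varying the flags first with $V$ fixed and then with both $V$ and $h$ fixed, realizing all general $\calEdot$ and all general $\calGdot(h)$, as in the previous proofs.

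The main obstacle is finding the correct $V$ and $W$. Unlike in Theorem~\ref{T:ThirdFamily}, where the asymmetry between $\lambda^1 = (b{-}a, b{-}a, \kappa)$ and $\lambda^2 = 2^a + \rho$ suggested a natural $2 + (b{-}2)$ split of $W$, here both $\lambda^1$ and $\lambda^2$ carry the symmetric prefix $(b{-}a{-}1)^2$ and the fiber condition $\nu^3 = 2^{a-1} + \sigma$ is tighter than the corresponding condition in Theorem~\ref{T:ThirdFamily}, so a more delicate symmetric construction is required. Once $V$ and $W$ are identified, the remaining verifications reduce to routine bookkeeping of the same type already carried out in the proofs of Theorems~\ref{T:First_Family}, \ref{T:ThirdFamily}, and~\ref{Th:SecondTypeII}.
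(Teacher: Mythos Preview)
Your overall plan and codimension count are correct, and you rightly flag the identification of $V$ and $W$ as the crux. But your concrete guesses about their shape diverge from the paper's proof in ways that matter.

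First, the paper's $V$ is not an intersection of spans but a \emph{span of intersections}: one sets $E^i_2:=F^i_5\cap F^3_{b+1}$ for $i=1,2$ and $V:=\langle E^1_2,E^2_2\rangle\subset F^3_{b+1}$. The containment $V\subset F^3_{b+1}$ is what makes $h:=H\cap V$ land in the auxiliary $\I^4$ problem (with $E^3_2=F^3_{b-1}\cap V$ coming from the $a$th row $\lambda^3_a=3$, which you did not invoke, and $E^4_2=F^4_{b+2}\cap V$).

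Second, your instinct that the symmetric prefix $(b{-}a{-}1)^2$ on $\lambda^1,\lambda^2$ forces a symmetric construction is exactly backwards: the paper's $W=\langle\,F^3_{b-1}\cap F^4_{b+2}\,,\,F^1_5\cap F^4_{b+2}\,\rangle$ deliberately breaks the $1\leftrightarrow 2$ symmetry (and uses $F^3_{b-1}$ rather than $F^3_{b+1}$).

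Third, and this is the new structural feature you miss, the induced flags are \emph{not} uniformly $(h+\Fdot^i)\cap W$: for $i=1,3$ the paper takes $\Gdot^i:=\Fdot^i\cap W$, independent of $h$, while only for $i\neq 1,3$ does one use $\Gdot^i(h):=(h+\Fdot^i)\cap W$. With these choices in hand, the remaining verifications are indeed the routine bookkeeping you describe.
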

%%%%%%%%%%%%%%%%%%%%%%%%%%%%%%%%%%%%%%%%%%%%%%%%%%%%%%%%%%%%%%%%%%%%%%%%%%%%%%%%%

As  $\bnu$ is a Schubert problem in $\Gr(a,b)$, 
$\kappa,\rho,\sigma$ are partitions with $\kappa_1\leq b{-}a{-}1$,  $\rho_1,\sigma_1\leq b{-}a{-}2$,
and $\kappa_a=\rho_a=\sigma_a=0$. 

When $a=2$ and $b=5$, Theorem~\ref{Th:SecondTypeIII} gives two enriched Schubert problems fibered over $\I^4=2$ with fiber
$\T^2 \cdot \I^2$.
These problems are
\[
    \raisebox{-7.5pt}{\includegraphics{figures/22}} \cdot
    \raisebox{-7.5pt}{\includegraphics{figures/22}} \cdot
    \raisebox{-14.5pt}{\includegraphics{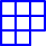}} \cdot
    \raisebox{-14.5pt}{\includegraphics{figures/211}}  
   \qquad
    \mbox{and}
    \qquad
   \raisebox{-7.5pt}{\includegraphics{figures/22}} \cdot
   \raisebox{-7.5pt}{\includegraphics{figures/22}} \cdot
   \raisebox{-14.5pt}{\includegraphics{figures/332}} \cdot
   \raisebox{-14.5pt}{\includegraphics{figures/111}} \cdot \bI\,.
\]
These correspond to $\bnu=(\T,\I,\T,\I)$ and $\bnu=(\T,\I,\T,0,\I)$, respectively.
For these, $\kappa=\rho=\sigma=0$.

%%%%%%%%%%%%%%%%%%%%%%%%%%%%%%%%%%%%%%%%%%%%%%%%%%%%%%%%%%%%%%%%%%%%%%%%%%%%%%%%%
\begin{corollary}
 These two problems have Galois group  $S_2 \wr S_2$ over $\CC$.
\end{corollary}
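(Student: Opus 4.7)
The plan is to combine Theorem~\ref{Th:SecondTypeIII}, Lemma~\ref{L:Fibered}, and Proposition~\ref{P:Two} to reduce to a concrete monodromy computation, which I will then carry out by the pencil trick used in Corollaries~\ref{C:First_Family} and~\ref{C:ThirdFamily}. By Theorem~\ref{Th:SecondTypeIII} applied with $a=2$, $b=5$, and $\kappa=\rho=\sigma=0$, each of the two Schubert problems is fibered over $\bmu = \I^4 = 2$ in $\Gr(2,4)$ with fiber $\bnu = \T^2\cdot\I^2 = 2$ in $\Gr(2,5)$. By Proposition~\ref{P:Two}, both $\Gal_\bmu$ and $\Gal_\bnu$ are $S_2$, so Lemma~\ref{L:Fibered} gives an inclusion $\Gal_\blambda \hookrightarrow S_2 \wr S_2$ and an exact sequence
\[
  1 \,\longrightarrow\, \Gamma \,\longrightarrow\, \Gal_\blambda \,\twoheadrightarrow\, S_2 \,\longrightarrow\, 1\,,
\]
with $\Gamma \subset S_2 \times S_2$ surjecting onto each factor. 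It will therefore suffice to show that $\Gamma$ contains the element $(e,(12))$, since conjugation by the nontrivial element of the quotient $S_2$ then produces $((12),e)$ and we conclude $\Gamma = S_2\times S_2$.

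To produce $(e,(12)) \in \Gamma$, I will follow the template of Corollary~\ref{C:First_Family}. Fix the linear data that determines the auxiliary instance $\Omega_{\sI^4}\calEdot$ of $\bmu$ in a 4-plane $V \subset \CC^9$: namely, I fix whichever subspaces of the flags $\calFdot$ enter the construction of $V$ and of $\calEdot$ in the proof of Theorem~\ref{Th:SecondTypeIII}. This pins down the two solutions $h_1, h_2 \in \Gr(2,V)$ to the auxiliary problem, so the quotient map $\Gal_\blambda \twoheadrightarrow S_2$ is trivial on the resulting subfamily $\calY \subset (\Fl_9)^s$, and the monodromy group of $\calX_\blambda|_\calY \to \calY$ is a subgroup of $\Gamma$. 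Within $\calY$, I then fix all remaining flag data except one subspace $F^i_?$ corresponding to a chosen condition in $\bnu$, which I let move in a pencil. The point is to choose the pencil so that the induced flag $\Gdot^i(h_1)$ in the fiber Schubert problem over $h_1$ stays constant, while the induced flag $\Gdot^i(h_2)$ over $h_2$ sweeps out a general pencil in $\Gr(2,W(h_2))$.

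Once such a pencil is constructed, Remark~\ref{R:Galois_Comment} (applied to the fiber problem $\bnu = \T^2\cdot\I^2 = 2$ in $\Gr(2,5)$, or to its reduction to $\I^4 = 2$ in a $\Gr(2,4)$) guarantees that some member of the pencil is an instance of $\bnu$ with a unique double point, so monodromy around it induces a simple transposition on $\Omega_\bnu \calGdot(h_2)$ while fixing $\Omega_\bnu \calGdot(h_1)$ pointwise. That gives exactly the element $(e,(12))\in\Gamma$, completing the proof. The only step that requires care—and which I expect to be the main obstacle—is the verification that one can decouple the motions of $\Gdot^i(h_1)$ and $\Gdot^i(h_2)$ in a pencil: concretely, that the linear map $F^i_? \mapsto (\Gdot^i(h_1), \Gdot^i(h_2))$ given by the formulas in the proof of Theorem~\ref{Th:SecondTypeIII} has enough freedom to vary the second output while keeping the first fixed. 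This hinges on $h_1$ and $h_2$ being linearly independent in $V$ (which holds since $h_1\oplus h_2 = V$ for the problem $\I^4=2$ in $\Gr(2,4)$, by the geometry illustrated around Figure~\ref{F:4lines}), and on the dimensions of the subspaces used to build $\Gdot^i(h_a)$ leaving a positive-dimensional family of choices of $F^i_?$ that fix $\Gdot^i(h_1)$ but not $\Gdot^i(h_2)$. This is the analogue of cases (1)--(3) in the proof of Corollary~\ref{C:First_Family}, done here for one condition $\I$ in the fiber $\bnu$—for the first problem that is the unique condition $\I^2$ adjacent to the $\T^2$ part, for the second problem it is one of the two $\I$'s—and it is the step I would write out in detail.
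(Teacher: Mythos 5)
Your plan is correct and follows the paper's intended route: the paper gives this corollary no explicit proof, tacitly invoking the same argument as Corollary~\ref{C:First_Family} (just as it does explicitly for the parallel Corollary~\ref{C:ThirdFamily}). Your chain---Theorem~\ref{Th:SecondTypeIII} plus Lemma~\ref{L:Fibered} and Proposition~\ref{P:Two} to get $\Gal_\blambda\subseteq S_2\wr S_2$ surjecting onto $S_2$, then a fixed-base pencil, Remark~\ref{R:Galois_Comment}, and $h_1\oplus h_2=V$ to exhibit $(e,(12))$ in the kernel $\Gamma$---is exactly that argument carried over to the $V$, $W$, and $\Gdot^i(h)=(h+\Fdot^i)\cap W$ appearing in the proof sketch of Theorem~\ref{Th:SecondTypeIII}, so the decoupling step you flag as remaining is the same computation done there.
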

%%%%%%%%%%%%%%%%%%%%%%%%%%%%%%%%%%%%%%%%%%%%%%%%%%%%%%%%%%%%%%%%%%%%%%%%%%%%%%%%%

%%%%%%%%%%%%%%%%%%%%%%%%%%%%%%%%%%%%%%%%%%%%%%%%%%%%%%%%%%%%%%%%%%%%%%%%%%%%%%%%%
\begin{proof}[Proof sketch for {Theorem~\ref{Th:SecondTypeIII}}]
We derive the auxiliary Schubert problem $\I^4=2$ and sketch the derivation of the Schubert problem $\bnu$ in the
fibers, and leaving remaining verifications of Definition~\ref{D:fibration} to the reader.
Let $\calFdot\in(\Fl_{4+b})^s$ be general and suppose that $H\in\Omega_{\blambda}\calFdot$.
As $\Omega_{\blambda}\calFdot$ is a subset of
\[
   \Omega_{(b-a-1)^2}\Fdot^1\cap
   \Omega_{(b-a-1)^2}\Fdot^2\cap
   \Omega_{(3^a,2)}\Fdot^3\cap
   \Omega_{1^{a+1}}\Fdot^4\,,
\]
we have
 \begin{eqnarray*}
   &\dim(H\cap F^1_5)= 2\,,\ 
   \dim(H\cap F^2_5)= 2\,,\ &\\
   &\dim(H\cap F^3_{b+1})= a+1\,,\ 
   \dim(H\cap F^3_{b-1})= a\,,\ \mbox{ and }\
  \dim(H\cap F^4_{b+2})= a+1\,.&
 \end{eqnarray*}
 For $i\in\{1,2\}$, let $\defcolor{E_2^i}:=F^i_5\cap F^3_{b+1}$ and set $\defcolor{V}:=\langle E_2^1,E_2^2\rangle \simeq \CC^4$.
 Since $H\cap F^3_{b+1}$ has codimension 1 in $H$, $\dim(H\cap E^i_2)=1$ for $i\in\{1,2\}$, and so
 $\defcolor{h}:=H\cap V\in\Gr(2,V)$.

 For $i\in\{1,\dotsc,4\}$, set $\defcolor{\Edot^i}:=\Fdot^i\cap V$.
 As $\CC^4\simeq V\subset F^3_{b+1}$, $\dim( V\cap F^3_{b-1})\geq 2$, and it equals 2 by the general position of $\Fdot^1,\Fdot^2,\Fdot^3$.
 The dimension of $F^4_{b+2}\cap V$ is also 2, by the general position of  $\Fdot^1,\Fdot^2,\Fdot^3,\Fdot^4$. 
 Then $E^3_2= F^3_{b-1}\cap V$ and $E^4_2:=F^4_{b+2}\cap V$, and $h$ meets each in 1-dimensional subspaces.
 Thus, 
 \begin{equation}\label{Eq:TypeIIIint}
    h\ \in\ \Omega_{\sI} E^1_2 \,\cap\,
	\Omega_{\sI} E^2_2 \,\cap\,
	\Omega_{\sI} E^3_2 \,\cap\,
	\Omega_{\sI} E^4_2 \, \ =\ \Omega_{\sI^4}\calEdot\,.
 \end{equation}
 This shows part (1) of Definition~\ref{D:fibration} and the first half of part (4).
 
 Let $\defcolor{A}:= F^3_{b-1}\cap F^4_{b+2}\simeq\CC^{b-3}$ and $\defcolor{B}:=F^1_5\cap  F^4_{b+2}\simeq\CC^3$ and set
 $\defcolor{W}:=\langle A,B\rangle\simeq\CC^b$.
 Then $W$ is complimentary to $V$.
 Note that $\dim(H\cap A)=a{-}1$ and $\dim(H\cap B)=1$, so that $\dim(H\cap W)=a$.

 Let us define flags $\calGdot(h)$ in $W$.
 Set $\defcolor{\Gdot^1(h)}:= \Fdot^1 \cap W$, $\defcolor{\Gdot^3(h)}:= \Fdot^3 \cap W$, and
 for $i\neq 1,3$, set $\defcolor{\Gdot^i(h)}:=(h+\Fdot^i)\cap W$.  
 For a given $h\in\Omega_{\sI^4}\calEdot$, all general instances of flags in $W$ occur as $\calGdot(h)$, for flags
 $\calFdot$ that induce $V,W$ and $\calEdot$.
 This proves part (4) of  Definition~\ref{D:fibration}.
 For parts (2) and (3),  linear-algebra arguments will suffice as before.
\end{proof}
%%%%%%%%%%%%%%%%%%%%%%%%%%%%%%%%%%%%%%%%%%%%%%%%%%%%%%%%%%%%%%%%%%%%%%%%%%%%%%%%%

We give the second additional construction of enriched Schubert problems.

%%%%%%%%%%%%%%%%%%%%%%%%%%%%%%%%%%%%%%%%%%%%%%%%%%%%%%%%%%%%%%%%%%%%%%%%%%%%%%%%%
\begin{theorem}\label{Th:SecondTypeIV}
 Suppose that $a < b$ and
 $\bnu=( (2)^{a-1}, (b{-}a{-}1), \nu^3, \nu^4, \dots,\nu^s)$ is a Schubert problem in $\Gr(a,b)$. 
 Then
\[
  \blambda\ =\ \Bigl( (b{-}a{+}1, (2)^a), (b{-}a{-}1)^2 , 
    (1)^{a+1} + \nu^3, (1)^{a+1} + \nu^4, \nu^5, \dots, \nu^s  \Bigr)
\]
is a Schubert problem in $\Gr(2{+}a, 4{+}b)$ that is fibered over $\I^4$ in $\Gr(2,4)$ with fiber $\bnu$.
\end{theorem}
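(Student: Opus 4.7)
The plan is to mimic the fibration construction used in Theorems~\ref{T:First_Family}, \ref{T:ThirdFamily}, and \ref{Th:SecondTypeIII}, verifying the four parts of Definition~\ref{D:fibration} in turn. First, a short arithmetic check: since $|\bnu| = a(b-a)$ forces $\sum_{i\geq 3}|\nu^i| = a(b-a)-2(a-1)-(b-a-1)$, summing the parts
\[
 |\lambda^1|+|\lambda^2|+|\lambda^3|+|\lambda^4|+\sum_{i\geq 5}|\lambda^i|
  \ =\ (b+a+1)+(2b-2a-2)+2(a+1)+\sum_{i\geq 3}|\nu^i|
\]
gives $ab-a^2+2b+4 = (a+2)(b+2-a)$, confirming that $\blambda$ is a Schubert problem in $\Gr(a{+}2,b{+}4)$.

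For parts (1) and the first half of (4), let $\calFdot$ be general flags in $\CC^{b+4}$ and suppose $H\in\Omega_{\blambda}\calFdot$. The Schubert conditions force $\dim(H\cap F^1_2)=1$, $\dim(H\cap F^1_{b-a+j})=j$ for $j=2,\dots,a{+}1$ (so in particular $\dim(H\cap F^1_{b+1})=a{+}1$), $\dim(H\cap F^2_5)=2$, and $\dim(H\cap F^i_{b+2})=a{+}1$ for $i=3,4$. Taking $\defcolor{W}:=F^3_{b+2}\cap F^4_{b+2}\simeq\CC^b$ makes $\defcolor{K}:=H\cap W$ an $a$-plane in $W$. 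I would define
\[
  \defcolor{V}\ :=\ F^1_2\,\oplus\,(F^2_5\cap F^1_{b+1})\ \subset\ F^1_{b+1}\,,
\]
a 4-plane meeting $W$ trivially in general position, and set
\[
  E^1_2:=F^1_2,\quad E^2_2:=F^2_5\cap F^1_{b+1},\quad E^3_2:=V\cap F^3_{b+2},\quad E^4_2:=V\cap F^4_{b+2}\,,
\]
each a 2-plane in $V$. A dimension count using the modular law (applied via $F^1_2\subset F^1_p$ for $p\geq 2$) shows that $\defcolor{h}:=H\cap V$ is 2-dimensional, spanned by the distinguished 1-planes $H\cap F^1_2$ and $H\cap F^2_5\cap F^1_{b+1}$, and meets each $E^i_2$ in a 1-plane. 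Hence $h\in\Omega_{\sI^4}\calEdot$, establishing part (1); that an arbitrary general $\calEdot$ in $V$ is attainable by varying $\calFdot$ (with $V$ fixed) follows as in Theorem~\ref{T:First_Family}.

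For parts (2) and (3), I would define the flags $\calGdot(h)$ in $W$ with shifts chosen so that each Schubert condition passes from $H$ to $K$:
\[
  G^1_c:=F^1_{c+4}\cap W,\ \ G^2_c(h):=(h{+}F^2_{c+3})\cap W,\ \ G^i_c(h):=(h{+}F^i_{c+1})\cap W\ (i{=}3,4),
\]
and $G^i_c(h):=(h{+}F^i_{c+2})\cap W$ for $i\geq 5$. The key identity for $i=1$ is that $\dim(K\cap F^1_{c+4})=\dim(H\cap F^1_{c+4})-2$ in the relevant range, because $F^1_{c+4}\cap W$ has the generic dimension $c$ and $H\cap F^1_{c+4}$ meets $W$ transversely modulo its contribution to $h$; this converts the tail of $\lambda^1=(b{-}a{+}1,2^a)$ into the condition $\nu^1=(2)^{a-1}$ on $K$. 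For $i=2$ the projection of $H\cap F^2_5$ along $h$ onto $K$ gives a 1-plane in $K\cap G^2_2(h)$, producing the single nontrivial $\nu^2=(b{-}a{-}1)$ condition. The remaining cases $i=3,4$ and $i\geq 5$ are handled exactly as in Theorem~\ref{T:First_Family}. In each case one shows the equivalence
\[
  K\in\Omega_{\nu^i}\Gdot^i(h)\ \Longleftrightarrow\ H=h\oplus K\in\Omega_{\lambda^i}\Fdot^i\,,
\]
and part (4) for $\calGdot(h)$ then follows since every flag in $W$ arises as $(h{+}\Fdot)\cap W$ for some $\Fdot$.

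The principal obstacle is the verification of parts (2) and (3) when $i=1$: because the flag $\Gdot^1$ must be defined without reference to $h$ in order to correctly encode the condition $\nu^1=(2)^{a-1}$ (which involves a length-$(a{-}1)$ chain of dimension jumps in $K\cap F^1_*$), one must check that no ``mixed'' elements $v=h'+k$ with $h'\in h\setminus\{0\}$ and $k\in K\setminus\{0\}$ enter $H\cap F^1_p$ and spoil the additivity $\dim(H\cap F^1_p)=\dim(h\cap F^1_p)+\dim(K\cap F^1_p)$. This is ensured by $F^1_2\subset V$ together with the structure of $V\cap F^1_p=F^1_2+(F^2_5\cap F^1_p)$ for $p\leq b{+}1$; the remaining verifications are routine dimension counts using the general position of $\calFdot$.
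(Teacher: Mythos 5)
Your construction matches the paper's proof sketch exactly: you identify the same $V := F^1_2 \oplus (F^2_5 \cap F^1_{b+1})$, the same $W := F^3_{b+2} \cap F^4_{b+2}$, the same auxiliary subspaces $E^i_2$, and the same shifted flags $\calGdot(h)$ (including the fact that $\Gdot^1 := \Fdot^1 \cap W$ is independent of $h$, which is the one novel feature of this family compared to Theorem~\ref{T:First_Family}). The paper's proof is only a sketch and waves its hand at parts (2) and (3) of Definition~\ref{D:fibration} with "linear-algebra arguments will suffice as before," so your attempt to fill these in is valuable; the arithmetic check and the cases $i\geq 2$ are correct.

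However, your stated resolution of the ``principal obstacle'' for $i=1$ is incorrect as written. The additivity $\dim(H\cap F^1_p) = \dim(h\cap F^1_p) + \dim(K\cap F^1_p)$ fails for $p$ in the intermediate range $b-a+2 \leq p \leq b$: mixed elements \emph{do} enter $H\cap F^1_p$ there. For example with $a=2$, $b=5$, $p=5$, transversality forces $\dim(H\cap F^1_5)=2$, while $\dim(h\cap F^1_5)=1$ and $\dim(K\cap F^1_5)=0$ generically, so the two sides differ by one. (The structure $V\cap F^1_p = F^1_2 + (F^2_5\cap F^1_p)$ that you invoke does not prevent this.) What makes the theorem work is different: since the parts $\lambda^1_j = 2$ for $j=2,\dots,a{+}1$ (and $\nu^1_m = 2$ for $m=1,\dots,a{-}1$) are consecutive and equal, the relevant subspaces of $\Fdot^1$ (resp.\ $\Gdot^1$) increase by one at each step, so \emph{all} the intermediate Schubert inequalities cascade from the top one. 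Thus the only equivalence requiring verification is at $p=b{+}1$; there $h\subset V\subset F^1_{b+1}$, so $H\cap F^1_{b+1} = h\oplus(K\cap F^1_{b+1})$ honestly and the offset of $2$ holds trivially. In short, your identity $\dim(K\cap F^1_{c+4}) = \dim(H\cap F^1_{c+4}) - 2$ is correct only at $c+4=b{+}1$, which is all that is needed; it is not correct throughout the range you describe, nor is the absence of mixed elements the mechanism at work.
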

%%%%%%%%%%%%%%%%%%%%%%%%%%%%%%%%%%%%%%%%%%%%%%%%%%%%%%%%%%%%%%%%%%%%%%%%%%%%%%%%%

When $a=2$ and $b=5$, Theorem~\ref{Th:SecondTypeIV} gives three enriched Schubert problems fibered over $\I^4=2$ with fiber
$\T^2 \cdot \I^2$.
These problems are
\[
\raisebox{-14.5pt}{\includegraphics{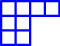}} \cdot
\raisebox{-7.5pt}{\includegraphics{figures/22}} \cdot
\raisebox{-14.5pt}{\includegraphics{figures/111}} \cdot
\raisebox{-14.5pt}{\includegraphics{figures/111}} \cdot \bI \cdot \bI
\]
and the two obtained by replacing one or two occurrences of\/ $\raisebox{-2pt}{\shIII}\cdot\sI$ with
$\raisebox{-2pt}{\scTII} \,$.

%%%%%%%%%%%%%%%%%%%%%%%%%%%%%%%%%%%%%%%%%%%%%%%%%%%%%%%%%%%%%%%%%%%%%%%%%%%%%%%%%
\begin{corollary}
 These three problems have Galois group  $S_2 \wr S_2$ over $\CC$.
\end{corollary}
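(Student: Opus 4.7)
By Theorem~\ref{Th:SecondTypeIV}, each of the three Schubert problems is fibered over $\bmu = \I^4 = 2$ in $\Gr(2,4)$ with fiber $\bnu = \T^2 \cdot \I^2 = 2$ in $\Gr(2,5)$. Lemma~\ref{L:Fibered} together with Proposition~\ref{P:Two} gives the upper bound $\Gal_{\blambda} \subseteq \Gal_{\bnu} \wr \Gal_{\bmu} = S_2 \wr S_2$, together with a surjection $\pi\colon \Gal_{\blambda} \twoheadrightarrow S_2$ onto the base factor. Writing $\Gamma := \ker(\pi) \subseteq S_2 \times S_2$, it suffices to exhibit a single element $(e,\sigma) \in \Gamma$ with $\sigma \neq e$: conjugating by any lift of the non-trivial element of the base $S_2$ then produces $(\sigma, e) \in \Gamma$, forcing $\Gamma = S_2 \times S_2$ and hence $\Gal_{\blambda} = S_2 \wr S_2$.

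The plan is to mimic the pencil construction of Corollary~\ref{C:First_Family}. Fix general linear subspaces of $\CC^9$ that pin down the auxiliary $4$-plane $V$, the complement $W(h)$ determined in the proof of Theorem~\ref{Th:SecondTypeIV}, and the auxiliary instance $\Omega_{\sI^4}\calEdot$ on $\Gr(2,V)$, with two solutions $h_1, h_2$ satisfying $V = h_1 \oplus h_2$. Let $\calY \subset (\Fl_9)^s$ denote the subfamily of flag tuples $\calFdot$ compatible with these fixed data. Over $\calY$ the intermediate branched cover of degree $2$ is trivialized, so the monodromy of $\calX_{\blambda}|_{\calY} \to \calY$ lies entirely in $\Gamma$. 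For an index $i$ corresponding to a condition $\I$ of $\bnu$ (in the first problem, $i \in \{5,6\}$), I would produce a pencil of flags in $\calY$ along which all data $\calGdot(h_1)$ stays fixed while a single $3$-plane in $\calGdot(h_2)$ sweeps out a general pencil in $W(h_2)$. By Remark~\ref{R:Galois_Comment} applied to the fiber problem $\T^2 \cdot \I^2 = 2$, such a pencil contains an instance with a unique double solution, and local monodromy around that instance induces the desired transposition $\sigma$ acting only on the fiber over $h_2$.

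The principal technical obstacle is verifying that the pencil can be arranged to act on $\calGdot(h_2)$ while keeping $\calGdot(h_1)$ pointwise fixed. This rests on the decomposition $V = h_1 \oplus h_2$ together with the explicit formulas for $G^i_c(h)$ established in the proof of Theorem~\ref{Th:SecondTypeIV}: for a condition $\I$ of $\bnu$ the subspace $G^i_3(h)$ has the form $(h + F^i_{\bullet})\cap W(h)$, which depends linearly on $h$, and the independence of $h_1, h_2$ inside $V$ allows us to choose a pencil $F^i_{\bullet}(t)$ whose trace on $h_1$ is constant while its trace on $h_2$ ranges over any prescribed pencil of $3$-planes in $W(h_2)$. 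A brief case analysis over the three problems of the corollary---which differ only in whether one or both of the condition pairs $\raisebox{-2pt}{\shIII}\cdot\sI$ have been collapsed to $\raisebox{-2pt}{\scTII}$---produces such a pencil in each case, yielding the element $(e,\sigma) \in \Gamma$ and completing the proof.
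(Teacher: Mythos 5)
Your proposal is correct and matches the argument the paper intends but omits: the paper states this corollary without proof, relying on the reader to apply the pencil construction from the proofs of Corollaries~\ref{C:First_Family} and~\ref{C:ThirdFamily}, and that is exactly what you do (upper bound $\Gal_\blambda\subseteq S_2\wr S_2$ via Lemma~\ref{L:Fibered} and Proposition~\ref{P:Two}, then a restricted pencil $\calY'\subset\calY$ that fixes $\calGdot(h_1)$ while moving one $3$-plane of $\calGdot(h_2)$ and invokes Remark~\ref{R:Galois_Comment} to produce a transposition in $\Gamma$). The brief case analysis over indices $i\geq 5$ versus $i\in\{3,4\}$ that you flag as the technical point is precisely the distinction already handled in the three cases of the proof of Corollary~\ref{C:First_Family}, so nothing new is required.
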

%%%%%%%%%%%%%%%%%%%%%%%%%%%%%%%%%%%%%%%%%%%%%%%%%%%%%%%%%%%%%%%%%%%%%%%%%%%%%%%%%

%%%%%%%%%%%%%%%%%%%%%%%%%%%%%%%%%%%%%%%%%%%%%%%%%%%%%%%%%%%%%%%%%%%%%%%%%%%%%%%%%
\begin{proof}[Proof sketch for {Theorem~\ref{Th:SecondTypeIV}}]
We sketch the derivation of the auxiliary Schubert problem $\I^4=2$ and the Schubert problem $\bnu$ in the
fibers, and leave the verifications of Definition~\ref{D:fibration} to the reader.
Let $\calFdot\in(\Fl_{4+b})^s$ be general and suppose that $H\in\Omega_{\blambda}\calFdot$. 
Then
 \begin{eqnarray*}
  & \dim(H\cap F^1_2)= 1\,,\
  \dim(H\cap F^1_{b+1})= a+1\,,\ 
  \dim(H\cap F^2_5)= 2\,,&\\ 
  &\dim(H\cap F^3_{b+2})\ = \ \dim(H\cap F^4_{b+2})= a+1\,.&
 \end{eqnarray*}
 Let $\defcolor{E_2^1}:=F^1_2$ and $\defcolor{E_2^2}:=F^2_5\cap F^1_{b+1}$ and set
 $\defcolor{V}:=\langle E_2^1,E_2^2\rangle \simeq \CC^4$.
 Define $\Edot^1:=\Fdot^i\cap V$ for $i\in\{1,\dotsc,4\}$
 Then $E^i_2= F^i_{b+2}\cap V$ for $i\in\{3,4\}$ and $h:=H\cap V$ has dimension 2, and also 
 \begin{equation}\label{Eq:TypeIVint}
   h\ \in\ \Omega_{\sI} E^1_2 \,\cap\,
	\Omega_{\sI} E^2_2 \,\cap\,
	\Omega_{\sI} E^3_2 \,\cap\,
	\Omega_{\sI} E^4_2 \,  \ =\ \Omega_{\sI^4}\calEdot\,.
 \end{equation}
 This is the auxiliary problem and as before establishes part (1) and the first half of part (4)  of
 Definition~\ref{D:fibration}. 
 
 Set $\defcolor{W}:=F^3_{b+2}\cap F^4_{b+2}\simeq\CC^b$ and $\defcolor{\Gdot^1}:=\Fdot^1 \cap W$.
 For $h$ in the intersection~\eqref{Eq:TypeIVint}, let $\defcolor{\Gdot^i(h)}:=(h+\Fdot^i)\cap W$ for $j\in\{2,\dotsc,s\}$.
 All general instances of flags $\calGdot(h)$ arise from flags $\calFdot$ with $F^1_2,F^1_{b+1},F^2_5, F^3_{b+2}$, and
 $F^4_{b+2}$ fixed (and thus $V$, $W$, and the intersection~\eqref{Eq:TypeIVint} are fixed).
 This proves part (4) of  Definition~\ref{D:fibration}. 
 For parts (2) and (3),  linear-algebra arguments will suffice as before.
\end{proof}

%%%%%%%%%%%%%%%%%%%%%%%%%%%%%%%%%%%%%%%%%%%%%%%%%%%%%%%%%%%%%%%%%%%%%%%%%%%%%%%%%
%
\section{Conclusion}
We studied the Galois groups of all $81,533$ nontrivial Schubert problems in $\Gr(4,9)$.
Of the $31,806$ essential problems, 149 had Galois group that did not contain the alternating group, and we
identified the Galois group of each of these 149 enriched problems.
We discussed several methods to study Schubert Galois groups, including Vakil's algorithm and computing Frobenius elements.
We also introduced a new structure, a fibration of Schubert problems, that explained the Galois groups of the 149 enriched
problems.
This is both a first step towards the inverse Galois problem in Schubert calculus and points towards a potential
classification of enriched Schubert problems.

%%%%%%%%%%%%%%%%%%%%%%%%%%%%%%%%%%%%%%%%%%%%%%%%%%%%%%%%%%%%%%%%%%%%%%%%%%%%%

\providecommand{\bysame}{\leavevmode\hbox to3em{\hrulefill}\thinspace}
\providecommand{\MR}{\relax\ifhmode\unskip\space\fi MR }
% \MRhref is called by the amsart/book/proc definition of \MR.
\providecommand{\MRhref}[2]{%
  \href{http://www.ams.org/mathscinet-getitem?mr=#1}{#2}
}
\providecommand{\href}[2]{#2}

%%%%%%%%%%%%%%%%%%%%%%%%%%%%%%%%%%%%%%%%%%%%%%%%%%%%%%%%%%%%%%%%%%%%%%%%%%%%%

\end{document}